\theoremstyle{plain}
\newtheorem{thm}{Theorem}
  \theoremstyle{definition}
  \newtheorem*{thm*}{Theorem}
  \newtheorem{defn}[thm]{Definition}
  \theoremstyle{remark}
  \newtheorem{rem}[thm]{Remark}
  \theoremstyle{plain}
  \newtheorem{prop}[thm]{Proposition}
  \theoremstyle{plain}
  \newtheorem{lem}[thm]{Lemma}
  \theoremstyle{plain}
  \newtheorem{cor}[thm]{Corollary}
 \theoremstyle{definition}
  \newtheorem{example}[thm]{Example}
  \theoremstyle{remark}
  \newtheorem*{rem*}{Remark}
  \theoremstyle{definition}
\newtheorem*{question*}{\it{QUESTION}}
\theoremstyle{plain}
\newtheorem{conjecture}{Conjecture}
\newcommand{\N}{\mathbb{N}}
\newcommand{\R}{{\mathbb{R}}}
\newcommand{\C}{{\mathbb{C}}}
\newcommand{\Z}{{\mathbb{Z}}}
\newcommand{\dd}{{\rm d}}
\newcommand{\ii}{{\rm i}}
\newcommand{\p}{{\rm p}}
\newcommand{\ac}{{\rm ac}}
\newcommand{\spn}{\mathop\mathrm{span}\nolimits} 
\newcommand{\Dom}{\mathop\mathrm{Dom}\nolimits}
\newcommand{\Ran}{\mathop\mathrm{Ran}\nolimits}
\newcommand{\Ker}{\mathop\mathrm{Ker}\nolimits}
\renewcommand{\Re}{\mathop\mathrm{Re}\nolimits}
\renewcommand{\Im}{\mathop\mathrm{Im}\nolimits}
\newcommand{\supp}{\mathop\mathrm{supp}\nolimits}
\newcommand{\Tr}{\mathop\mathrm{Tr}\nolimits}
\def\pFq#1#2#3#4#5{ 
  {}_{#1}F_{#2}\biggl(\genfrac..{0pt}{}{#3}{#4}\biggl|\,#5\biggr)
}
\def\regpFq#1#2#3#4#5{ 
  {}_{#1}\tilde{F}_{#2}\biggl(\genfrac..{0pt}{}{#3}{#4}\biggl|\,#5\biggr)
}
\def\pPhiq#1#2#3#4#5{ 
  {}_{#1}\phi_{#2}\biggl(\genfrac..{0pt}{}{#3}{#4}\biggl|\,q;\,#5\biggr)
}
\begin{document}

\title[]{The Hilbert $L$-matrix}

\author{Franti\v sek \v Stampach}
\address[Franti{\v s}ek {\v S}tampach]{
	Department of Mathematics, Faculty of Nuclear Sciences and Physical Engineering, Czech Technical University in Prague, Trojanova~13, 12000 Praha~2, Czech Republic
	}	
\email{stampfra@fjfi.cvut.cz}

\subjclass[2010]{47B37, 47B36, 33C45, 15A60}

\keywords{$L$-matrix, Hilbert $L$-matrix, Jacobi operator, orthogonal polynomials}

\date{\today}

\begin{abstract}
We analyze spectral properties of the Hilbert $L$-matrix 
\[
 \left(\frac{1}{\max(m,n)+\nu}\right)_{m,n=0}^{\infty}
\]
regarded as an operator $L_{\nu}$ acting on $\ell^{2}(\N_{0})$, for $\nu\in\R$, $\nu\neq0,-1,-2,\dots$. The approach is based on a spectral analysis of the inverse of $L_{\nu}$, which is an unbounded Jacobi operator whose spectral properties are deducible in terms of the unit argument ${}_{3}F_{2}$-hypergeometric functions. In particular, we give answers to two open problems concerning the operator norm of $L_{\nu}$ published by L.~Bouthat and J.~Mashreghi in [\emph{Oper. Matrices} 15, No.~1 (2021), 47--58]. In addition, several general aspects concerning the definition of an $L$-operator, its positivity, and Fredholm determinants are also discussed.
\end{abstract}

\maketitle

\section{Introduction}

In~\cite{bou-mas_oam21a}, L.~Bouthat and J.~Mashreghi use the Schur test to deduce sufficient conditions under which the structured matrix 
\[
 \mathcal{L}=\begin{pmatrix}
 				a_{0} & a_{1} & a_{2} & \dots\\
 				a_{1} & a_{1} & a_{2} & \dots\\
  				a_{2} & a_{2} & a_{2} & \dots\\
  				\vdots & \vdots & \vdots & \ddots
			 \end{pmatrix}
\]
determines a bounded operator on~$\ell^{2}(\N_{0})$ and derived an upper bound on its operator norm. We adopt the terminology from~\cite{bou-mas_oam21a} and refer to these matrices as the $L$-matrices.

\begin{defn}\label{def:L-matrix}
A semi-infinite matrix $\mathcal{L}$ is called \emph{$L$-matrix} if its matrix elements are of the form
\[
\mathcal{L}_{m,n}=a_{\max(m,n)}, \quad m,n\in\N_{0},
\]
where $a=\{a_{n}\}_{n=0}^{\infty}$ is a complex sequence. We call $a$ \emph{the parameter sequence} of $\mathcal{L}$.
\end{defn}

As an interesting example, the authors of~\cite{bou-mas_oam21a} studied the norm of a bounded operator $L_{\nu}$ determined by the $L$-matrix $\mathcal{L}_{\nu}$ whose parameter sequence reads
\[
 a_{n}(\nu)=\frac{1}{n+\nu},
\]
for $\nu>0$. In an obvious analogy to the famous (generalized) Hilbert matrix, which is the Hankel matrix with the $(m,n)$-th entry equal to $a_{m+n}(\nu)$, matrix $\mathcal{L}_{\nu}$ is called the \emph{Hilbert $L$-matrix}. It is proved in~\cite{bou-mas_oam21a} that 
\[
 \|L_{\nu}\|=4,
\]
if $\nu\geq 1/2$. However, for $\nu\in(0,1/2)$, the value of $\|L_{\nu}\|$ remained indeterminate. Since $\|L_{\nu}\|$ has to dominate the first diagonal element of $\mathcal{L}_{\nu}$, i.e., $\|L_{\nu}\|\geq a_{0}(\nu)=1/\nu$, it is clear that, for $\nu\in(0,1/4)$, $\|L_{\nu}\|>4$. This let the authors of~\cite{bou-mas_oam21a} to formulate two natural questions on the determination of numbers $\nu_{0}:=\inf\{\nu>0 \mid \|L_{\nu}\|=4\}$ and $\|L_{\nu}\|$, for $\nu<1/2$. In recent paper~\cite{bou-mas_laa22}, the authors deduced explicit upper and lower bounds on $\nu_{0}$.

The goal of this paper is a detailed spectral analysis of $L_{\nu}$ for all admissible real values of the parameter $\nu$, i.e., $\nu\neq0,-1,-2,\dots$. In particular, we provide answers to both above questions. An important role is played by the unit argument hypergeometric ${}_{3}F_{2}$-functions; see definition~\eqref{eq:def_3F2} below. We show that $\nu_{0}$ is the unique positive zero of the function
\[
 \nu\mapsto\pFq{3}{2}{-1/2,1/2,3/2}{1,\nu+1/2}{1},
\]
which, if evaluated numerically, is approximately $\nu_{0}\approx0.349086$. Further, we prove that, for $\nu\geq \nu_{0}$, the spectrum of $L_{\nu}$ is purely absolutely continuous and fills the interval $[0,4]$. However, if $0<\nu<\nu_{0}$, a unique and simple eigenvalue greater than $4$ appears in the spectrum of $L_{\nu}$. Clearly, this eigenvalue coincides with $\|L_{\nu}\|$. More concretely, we prove that
\[
 \|L_{\nu}\|=\frac{4}{1-4x_{0}^{2}(\nu)},
\]
where $x_{0}(\nu)$ is the unique zero of the function
\[
x\mapsto\pFq{3}{2}{x-1/2,x+1/2,x+3/2}{2x+1,x+\nu+1/2}{1}
\]
located in $(0,1/2)$. In addition, we show that $\|L_{\nu}\|$ is strictly decreasing in $(0,\nu_{0})$, deduce an asymptotic expansion of $\|L_{\nu}\|$ for $\nu$ small, and obtain bounds on $\|L_{\nu}\|$.

If $\nu<0$, spectral properties of $L_{\nu}$ are more delicate. It turns out that 
still $\sigma_{\ac}(L_{\nu})=[0,4]$ and $\sigma_{\p}(L_{\nu})$ can be identified with a set of roots of a~transcendental equation. Moreover, $\sigma_{\p}(L_{\nu})$ is shown to be always finite but, in contrast to the case $\nu>0$, $\sigma_{\p}(L_{\nu})$ contains a negative eigenvalue, if $\nu<0$. By a numerical evidence, it seems that $\sigma_{\text{p}}(L_{\nu})$ is actually at most a two-point set with a negative eigenvalue and possibly another eigenvalue greater than~4. This can be compared with the spectrum of the classical Hilbert matrix operator $H_{\nu}$, whose $(m,n)$-th matrix entry equals $a_{m+n}(\nu)$. Recall that $\sigma_{\text{ac}}(H_{\nu})=[0,\pi]$ for all $\nu\in\R\setminus(-\N_{0})$, while $\sigma_{\text{p}}(H_{\nu})$ is empty, if $\nu\geq1/2$, contains a unique eigenvalue greater than $\pi$, if $\nu\in(0,1/2)$, contains a negative eigenvalue, if $\nu<0$, and possibly another eigenvalue greater than $\pi$ (which is always the case, if $\nu<-1/2$). On the other hand, the spectrum of $L_{\nu}$ is always simple, which is not the case for $H_{\nu}$; see \cite[Thm.~5]{ros_pams58} or \cite[Thm.~8]{kal-sto_lma16} for more details. 

As far as the literature is concerned, let us mention that $L$-matrices were encountered in connection with the Hadamard multipliers in function spaces in~\cite{mas_09,mas-ran_amp19}. Some properties of $L$-matrices with a lacunary parameter sequence were studied in~\cite{bou-mas_oam21b}. The Hilbert $L$-matrix $\mathcal{L}_{1}$ also appears in Choi's paper~\cite{cho_amm83} as the \emph{loyal companion} of the Hilbert matrix. On the other hand, except the aforementioned works, it seems that general algebraic or analytic properties of $L$-matrices has not been systematically studied yet, also interesting concrete examples of such matrices with a solvable spectral problem are missing to author's best knowledge. Let us remark that (weighted) $L$-matrices can be find as resolvent operators of certain Jacobi operators that appear in problems of mathematical physics; see, for instance,  Example~\ref{ex:simple_L_matrix} below. This is related to an important fact that the formal inverse of an invertible $L$-matrix is tridiagonal; see Lemma~\ref{lem:tridiagonal_inverse}.

Although the main goal of this paper is the spectral analysis of operator $L_{\nu}$, we briefly discuss some general properties of $L$-matrices in Section~\ref{sec:L-matrices} and illustrate them on concrete examples.  We take the advantage of a close connection of $L$-matrices to Jacobi operators and orthogonal polynomials whose properties were extensively studied in the past and which theory is deeply developed. In Subsection~\ref{subsec:L-oper}, we show that to any given regular $L$-matrix (i.e., $a_{n}\neq a_{n+1}$ for all $n\in\N_{0}$), one can prescribe a unique densely defined $L$-operator acting on $\ell^{2}(\N_{0})$. The main advantage of this definition is that it determines the $L$-operator also in cases when the standard construction is not applicable, i.e., when $a\notin\ell^{2}(\N_{0})$. 
Next, in Subsection~\ref{subsec:positivity} and~\ref{subsec:determinant}, we characterize positive $L$-operators, give a sufficient condition for an $L$-operator $L$ to be of trace class, provide a~limit formula for the Fredholm determinant $\det(1-zL)$, and illustrate these results on an example of the $L$-operator with exponential parameter sequence. These results, although not needed for the spectral analysis of the Hilbert $L$-operator $L_{\nu}$, are of independent interest.

Main results focused on $L_{\nu}$ are derived in
Section~\ref{sec:Hilbert_L-matrix}. The spectral analysis of $L_{\nu}$ is worked out via the spectral analysis of the inverse $J_{\nu}:=L_{\nu}^{-1}$, which turns of to be an unbounded Jacobi operator of a special structure. 
In Subsection~\ref{subsec:nu=1}, it is shown that, in the particular case $\nu=1$, Jacobi operator $J_{1}$ corresponds to a subfamily of Continuous dual Hahn orthogonal polynomials, which allows an almost immediate spectral analysis of $L_{1}$. The spectral analysis of $J_{\nu}$, for general $\nu\in\R\setminus(-\N_{0})$, is started in Subsection~\ref{subsec:eigvectors_of_J_nu}, where essential formulas for generalized eigenvectors in terms of hypergeometric ${}_{3}F_{2}$-functions are established and their properties are investigated. The analysis continues in Subsection~\ref{subsec:spectrum_of_J_nu}, where the spectrum of $J_{\nu}$ and its parts are determined and, more generaly, formulas for the Weyl $m$-function and the spectral measure of $J_{\nu}$ are deduced. More detailed analysis of $\sigma_{\p}(J_{\nu})$, for $\nu>0$, which allows to answer the open problems from~\cite{bou-mas_oam21a}, is done in Subsection~\ref{subsec:point_spectrum_J_nu_nu>0}. Corollaries on properties of orthogonal polynomials associated to~$J_{\nu}$ are summarized in Subsection~\ref{subsec:ortohogonal_polynomials}. Finally, in Subsection~\ref{subsec:spectrum_L_nu}, main results on spectral properties of $L_{\nu}$ are formulated as Theorems~\ref{thm:spectrum_L_nu} and~\ref{thm:spectrum_L_nu_nu>0} and a conjecture is provided concerning the fine structure of $\sigma_{\p}(L_{\nu})$ for $\nu<0$.

Several illustrative and numerical plots are given in Section~\ref{sec:numerics}. The paper is concluded by Appendix divided into two parts. First, for reader's convenience, we recall necessary steps of the method of successive approximation~\cite{li-wong_jcam92} that are used in the paper. Second part contains computational details on a derivation of higher order terms in an asymptotic expansion of the root $x_{0}(\nu)$ for $\nu$ small.

\textbf{Notation:} As one can observe above, we distinguish in notation between semi-infinite matrix and an operator acting on $\ell^{2}(\N_{0})$ by using the capital calligraphic letters for matrices and standard capitals for operators. This is, of course, not needed when dealing with bounded operators only, however, necessary in general. Moreover, we use the following standard notation throughout the paper: $\N_{0}$ is the set of non-negative integers; $\N$ is the set of positive integers; $\{e_{n}\mid n\in\N_{0}\}$ is the standard basis of $\ell^{2}(\N_{0})$; $\Dom A$, $\Ker A$, and $\Ran A$ is the domain, kernel, and range of an operator $A$, respectively; $\sigma(A)$, $\sigma_{\p}(A)$, and $\sigma_{\ac}(A)$ is the spectrum, point spectrum, and absolutely continuous spectrum of $A$, respectively. 

Other notation, mainly related to special functions, is defined by its first occurrence.

\section{$L$-matrices}\label{sec:L-matrices}

\subsection{An operator associated to a regular $L$-matrix}\label{subsec:L-oper}

We first recall a general method of constructing densely defined operators on $\ell^{2}(\N_{0})$ with a given matrix representation, see for example~\cite[Sec.~2.1]{bec_jcam00}. A semi-infinite matrix $\mathcal{A}$ determines a closed and densely defined operator on $\ell^{2}(\N_{0})$ when its rows and columns can be identified with elements of~$\ell^{2}(\N_{0})$. In this case, we can define the so-called minimal operator $A_{\min}$ and the maximal operator $A_{\max}$ associated with the matrix $\mathcal{A}$.

The minimal operator $A_{\min}$ associated with matrix $\mathcal{A}$ is defined as the operator closure of the auxiliary operator $\dot{A}$ acting on vectors $x$ from the space of finitely supported sequences $\Dom\dot{A}:=\spn\{e_{n}\mid n\in\N_{0}\}$ as the matrix multiplication $\dot{A}x:=\mathcal{A}\cdot x$, where $x$ is regarded as an infinite column vector. The fact that $\dot{A}$ is always closable can be easily checked.
On the other hand, the maximal operator $A_{\max}$ acts again as the matrix multiplication $A_{\max}x:=\mathcal{A}\cdot x$ but on vectors $x$ from a maximal domain
\[
\Dom A_{\max}:=\left\{x\in\ell^{2}(\N_{0}) \mid \mathcal{A}\cdot x\in\ell^{2}(\N_{0})\right\}.
\]
$A_{\max}$ is always closed.

One has $A_{\min}\subset A_{\max}$. If $B$ is any operator which acts by the matrix multiplication $Bx=\mathcal{A}\cdot x$ and $\spn\{e_{n}\mid n\in\N_{0}\}\subset\Dom B$, then $A_{\min}\subset B\subset A_{\max}$. In this sense, matrix~$\mathcal{A}$ determines a unique operator $A$ acting on $\ell^{2}(\N_{0})$ with $\langle e_{m},Ae_{n}\rangle=\mathcal{A}_{m,n}$, for all $m,n\in\N_{0}$, if and only if $A_{\min}=A_{\max}$. Such matrix~$\mathcal{A}$ is called \emph{proper}.

Clearly, if $\mathcal{A}$ is an $L$-matrix, then its rows and columns are square summable if and only if its parameter sequence $a\in\ell^{2}(\N_{0})$.

\begin{defn}\label{def:sing_reg_L-matrix}
An $L$-matrix $\mathcal{L}$ is called \emph{regular}, if its parameter sequence fulfills $a_{n}\neq a_{n+1}$ for all $n\in\N_{0}$, and \emph{singular}, otherwise.
\end{defn}

The following claim justifies the terminology used in Definition~\ref{def:sing_reg_L-matrix}. 

\begin{lem}
 Let $\mathcal{L}$ be an $L$-matrix with $a\in\ell^{2}(\N_{0})$. Then
 \[
  \mathcal{L} \mbox{ is regular } \quad\Leftrightarrow\quad \Ker L=\{0\},
 \]
 for any operator $L$ such that $L_{\min}\subset L\subset L_{\max}$.
\end{lem}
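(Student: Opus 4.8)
The plan is to work directly with the equation $\mathcal{L}\cdot x=0$, which for $x\in\ell^{2}(\N_{0})$ is well posed because $a\in\ell^{2}(\N_{0})$ makes every row of $\mathcal{L}$ square summable, so that $\mathcal{L}\cdot x\in\ell^{2}(\N_{0})$ is equivalent to $x\in\Ker L_{\max}$. I would first reduce the clause \emph{for any operator $L$} to two controllable cases. Since $L\subset L_{\max}$ always gives $\Ker L\subseteq\Ker L_{\max}$, triviality of $\Ker L_{\max}$ forces triviality of every intermediate kernel; conversely, any kernel vector I exhibit in the singular case will be \emph{finitely supported}, hence already an element of $\Dom L_{\min}\subseteq\Dom L$, so it simultaneously witnesses a nontrivial kernel for all admissible $L$. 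Thus the whole statement rests on analyzing $\mathcal{L}\cdot x=0$ componentwise.

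For the implication that regularity implies $\Ker L=\{0\}$, I would introduce the partial sums $s_{m}:=\sum_{n=0}^{m}x_{n}$ and rewrite the $m$-th row equation as $a_{m}s_{m}+\sum_{n=m+1}^{\infty}a_{n}x_{n}=0$, the tail converging absolutely by Cauchy--Schwarz since $a,x\in\ell^{2}(\N_{0})$. Subtracting the $(m+1)$-th equation from the $m$-th and using $s_{m+1}=s_{m}+x_{m+1}$, the tail series and the $a_{m+1}x_{m+1}$ terms telescope away, leaving the clean identity $(a_{m}-a_{m+1})s_{m}=0$ for every $m\in\N_{0}$. Regularity ($a_{m}\neq a_{m+1}$) then yields $s_{m}=0$ for all $m$, whence $x_{0}=s_{0}=0$ and $x_{m}=s_{m}-s_{m-1}=0$ for $m\geq1$, so $x=0$.

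For the converse I would argue by contraposition: if $\mathcal{L}$ is singular, pick $k$ with $a_{k}=a_{k+1}$ and test the explicit vector $e_{k}-e_{k+1}\in\Dom L_{\min}$. Evaluating $\mathcal{L}\cdot(e_{k}-e_{k+1})$ splits into the range $m\leq k$, where the two relevant entries are $a_{\max(m,k)}=a_{k}$ and $a_{\max(m,k+1)}=a_{k+1}$ and cancel because $a_{k}=a_{k+1}$, and the range $m\geq k+1$, where both entries equal $a_{m}$ and cancel trivially; hence $e_{k}-e_{k+1}\in\Ker L\setminus\{0\}$ for every intermediate $L$. I expect no serious obstacle: the only points needing care are the square-summability bookkeeping that legitimizes the matrix action and the tail series, and the remark—needed precisely to justify the quantifier \emph{for any operator $L$}—that the singular kernel vector lives in $\Dom L_{\min}$. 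The genuine content is the telescoping identity $(a_{m}-a_{m+1})s_{m}=0$, which encodes both directions at once.
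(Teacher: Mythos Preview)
Your proof is correct and follows essentially the same approach as the paper: both directions hinge on subtracting consecutive row equations to obtain $(a_{m}-a_{m+1})\sum_{k=0}^{m}x_{k}=0$, and both use $e_{k}-e_{k+1}$ as the explicit kernel vector in the singular case. Your additional remarks on why the quantifier ``for any $L$'' is handled by the inclusions $\Ker L\subseteq\Ker L_{\max}$ and $e_{k}-e_{k+1}\in\Dom L_{\min}$ make the logical structure more explicit than the paper's version, but the mathematical content is identical.
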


\begin{proof}
 Fix $L$ a linear operator with $\spn\{e_{n}\mid n\in\N_{0}\}\subset \Dom L$ and $\mathcal{L}_{m,n}=\langle e_{m},Le_{n}\rangle$ for all $m,m\in\N_{0}$.

 Suppose $\mathcal{L}$ is singular. Then there exists $n\in\N_{0}$ such that $a_{n}=a_{n+1}$ which means that the $n$th and $(n+1)$th column of $\mathcal{L}$ coincide. It follows that $e_{n}-e_{n+1}\in\Ker L$.
 
 Suppose $\mathcal{L}$ is regular and $x\in\Ker L$. Then the equation $Lx=0$ means that
 \[
 a_{n}\sum_{k=0}^{n}x_{k}+\sum_{k=n+1}^{\infty}a_{k}x_{k}=0, \quad \forall n\in\N_{0}.
 \]
 Shifting the index $n$ by one and subtracting the resulting equation from the above one, we obtain
 \[
 (a_{n+1}-a_{n})\sum_{k=0}^{n}x_{k}=0, \quad \forall n\in\N_{0}.
 \]
 Since $a_{n}\neq a_{n+1}$, for all $n\in\N_{0}$, by assumption, the above equations imply that $x=0$.
\end{proof}

A particularly useful observation is that the formal inverse of a regular $L$-matrix is a~tridiagonal matrix with a special structure. Recall that the matrix multiplication $\mathcal{A}\cdot\mathcal{B}$ of infinite matrices $\mathcal{A}$ and $\mathcal{B}$ is well defined, if at least one of the matrices $\mathcal{A}$ or $\mathcal{B}$ is banded. Provided that $a_{n}\neq a_{n+1}$ for all $n\in\N_{0}$, we put
\begin{equation}
 b_{n}:=\frac{1}{a_{n}-a_{n+1}}, \quad n\in\N_{0},
\label{eq:def_b_n}
\end{equation}
and define the Jacobi matrix
\begin{equation}
 \mathcal{J}:=\begin{pmatrix}
 				b_{0} & -b_{0} \\
 				-b_{0} & b_{0}+b_{1} & -b_{1}\\
  				& -b_{1} & b_{1}+b_{2} & -b_{2}\\
  				& & -b_{2} & b_{2}+b_{3} & -b_{3}\\
  				& & & \ddots & \ddots & \ddots
			 \end{pmatrix}.
\label{eq:def_J}
\end{equation}
Matrix $\mathcal{J}$ turns out to be a formal inverse to regular $L$-matrix $\mathcal{L}$.

\begin{lem}\label{lem:tridiagonal_inverse}
 Let $\mathcal{L}$ be a regular $L$-matrix and $\mathcal{J}$ the Jacobi matrix defined by~\eqref{eq:def_b_n} and~\eqref{eq:def_J}, then we have:
 \begin{enumerate}[{\upshape i)}]
 \item $\mathcal{L}\cdot\mathcal{J}=\mathcal{J}\cdot\mathcal{L}=\mathcal{I}$,  where $\mathcal{I}$ is the identity matrix.
 \item If $\mathcal{J}\cdot x=0$, then $x_{n}=x_{0}$ for all $n\in\N_{0}$.
 \item $\mathcal{J}$ is proper.
 \end{enumerate}
\end{lem}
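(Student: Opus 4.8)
The plan is to take the three claims in increasing order of difficulty, the genuine content being (iii). For (i) I would simply compute both products entrywise; since $\mathcal{J}$ is tridiagonal, each entry of $\mathcal{J}\cdot\mathcal{L}$ and of $\mathcal{L}\cdot\mathcal{J}$ is a finite sum, so the products are well defined and no convergence issue arises. For a row $m\geq1$ of $\mathcal{J}\cdot\mathcal{L}$ one obtains
\[
(\mathcal{J}\cdot\mathcal{L})_{m,n}=b_{m-1}\bigl(\mathcal{L}_{m,n}-\mathcal{L}_{m-1,n}\bigr)+b_{m}\bigl(\mathcal{L}_{m,n}-\mathcal{L}_{m+1,n}\bigr),
\]
and I would split into the cases $n<m$, $n=m$, $n>m$. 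Inserting $\mathcal{L}_{k,n}=a_{\max(k,n)}$ and $b_{k}=1/(a_{k}-a_{k+1})$, the two differences either both vanish, or telescope to $\pm1$ and cancel, or leave exactly $\delta_{m,n}$; the boundary row $m=0$ is checked separately and gives the same result. Hence $\mathcal{J}\cdot\mathcal{L}=\mathcal{I}$. Because both $\mathcal{L}$ and $\mathcal{J}$ are symmetric, the other identity follows by transposing, $(\mathcal{L}\cdot\mathcal{J})_{m,n}=(\mathcal{J}\cdot\mathcal{L})_{n,m}=\delta_{m,n}$.

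For (ii) I would read the equation $\mathcal{J}\cdot x=0$ row by row. The zeroth row gives $b_{0}(x_{0}-x_{1})=0$, hence $x_{1}=x_{0}$, while the $m$th row rearranges to $b_{m-1}(x_{m}-x_{m-1})=b_{m}(x_{m+1}-x_{m})$. Since every $b_{n}\neq0$ by regularity, an induction starting from $x_{1}=x_{0}$ forces $x_{m+1}=x_{m}$ for all $m$, i.e.\ $x_{n}=x_{0}$.

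For (iii) the plan is to identify properness of a real symmetric Jacobi matrix with essential self-adjointness and then to invoke the Weyl limit-point/limit-circle dichotomy. As $\mathcal{J}$ is real symmetric with nonvanishing off-diagonal entries, one has $J_{\max}=J_{\min}^{*}$ and $J_{\min}$ is symmetric with deficiency indices $(d,d)$, $d\in\{0,1\}$; thus properness $J_{\min}=J_{\max}$ is equivalent to $d=0$, i.e.\ to $\mathcal{J}$ being in the limit point case. By Weyl's theory, if all solutions of $\mathcal{J}\cdot u=zu$ were square summable for a single value of $z$, they would be for every $z$, which is the limit circle case; so to conclude limit point it suffices to exhibit, for one value of $z$, a solution outside $\ell^{2}(\N_{0})$. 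I would take $z=0$: by the computation in (ii) the constant sequence $(1,1,1,\dots)$ solves $\mathcal{J}\cdot u=0$, yet it is plainly not in $\ell^{2}(\N_{0})$. Hence $\mathcal{J}$ is in the limit point case, $d=0$, and $\mathcal{J}$ is proper.

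The obstacle here is conceptual rather than computational: one has to recognize that (iii) is precisely the determinacy of the associated moment problem, and that the non-square-summable null vector already delivered by (ii) certifies the limit point case through the $z$-independence of the Weyl alternative. The only point to phrase with care is the legitimacy of testing the dichotomy at the real value $z=0$, which is justified exactly because the limit-point/limit-circle classification does not depend on the spectral parameter.
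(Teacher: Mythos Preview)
Your arguments for (i) and (ii) are correct and match what the paper does (the paper simply calls these ``straightforward'' and ``elementary''). For (iii) your underlying idea is also the paper's: produce a non-square-summable solution of $\mathcal{J}\cdot u=0$ and conclude properness. The paper computes the two fundamental solutions $p_n=(a_0-a_n)/(a_0-a_1)$ and $q_n=(a_n-a_1)/(a_0-a_1)$ and observes $p_n+q_n=1$, so they cannot both lie in $\ell^2$; you instead pull the constant sequence directly from~(ii), which is a neat shortcut since $p+q$ \emph{is} exactly your constant solution.

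There is, however, one genuine inaccuracy in your framing. You assert that $\mathcal{J}$ is ``real symmetric'' and then invoke the classical self-adjoint Weyl limit-point/limit-circle alternative. But Definition~\ref{def:L-matrix} allows the parameter sequence $a$ to be complex, so the $b_n$ need not be real and $\mathcal{J}$ is only complex symmetric. The $z$-independent dichotomy you need (properness of a complex Jacobi matrix $\Leftrightarrow$ not all solutions of the recurrence are in $\ell^2$) is not classical Weyl theory; it is the determinacy criterion for complex Jacobi matrices from Beckermann--Smirnova~\cite{bec-smi_ms04}, which is precisely the reference the paper invokes. Replace the self-adjoint language by that citation and your proof of (iii) is complete and, arguably, slightly slicker than the paper's, since you reuse (ii) rather than recomputing the fundamental system.
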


\begin{proof}
The proof of (i) is a matter of straightforward matrix multiplication. The verification of (ii) is also elementary. Thus, it suffices to prove claim~(iii). 

Let us denote by $p=\{p_{n}\}_{n=0}^{\infty}$ and $q=\{q_{n}\}_{n=0}^{\infty}$ the two solutions of the second order difference equation
\[
 -b_{n-1}\phi_{n-1}+(b_{n-1}+b_{n})\phi_{n}-b_{n}\phi_{n-1}=0, \quad n\in\N,
\]
determined by the initial conditions $p_{0}=0$, $p_{1}=1$ and $q_{0}=1$, $q_{1}=0$. The matrix $\mathcal{J}$ is proper, if and only if $p\notin\ell^{2}(\N_{0})$ or $q\notin\ell^{2}(\N_{0})$; see~\cite{bec-smi_ms04}. Using~\eqref{eq:def_b_n}, a straightforward computation shows that
\[
 p_{n}=\frac{a_{0}-a_{n}}{a_{0}-a_{1}} \quad\mbox{ and }\quad  q_{n}=\frac{a_{n}-a_{1}}{a_{0}-a_{1}},
\]
for $n\in\N_{0}$. Since $p_{n}+q_{n}=1$ for all $n\in\N$, $p$ and $q$ cannot be both in $\ell^{2}(\N_{0})$.
\end{proof}

Observations from Lemma~\ref{lem:tridiagonal_inverse} can be used to define a unique closed operator associated to a regular $L$-matrix even when $a\notin\ell^{2}(\N_{0})$, i.e, when the standard construction is not applicable. Indeed, since $\mathcal{J}$ is proper by claim~(iii) of Lemma~\ref{lem:tridiagonal_inverse}, it determines the unique operator $J$ acting on $\ell^{2}(\N_{0})$. Moreover, $J$ is invertible since $\Ker J=\{0\}$ by claim~(ii) of Lemma~\ref{lem:tridiagonal_inverse}. This justifies the following definition.

\begin{defn}\label{def:L-operator}
 To a given regular $L$-matrix $\mathcal{L}$, we associate the $L$-operator $L:=J^{-1}$, where $J$ is the Jacobi operator given by~\eqref{eq:def_J}. 
\end{defn}

\begin{rem}
 We also call the $L$-operator from Definition~\ref{def:L-operator} \emph{regular}, similarly as the $L$-matrix, to emphasize the (implicit) assumption $a_{n}\neq a_{n+1}$, for all $n\in\N_{0}$.
\end{rem}

\begin{prop}
 A regular $L$-operator is closed and densely defined.
\end{prop}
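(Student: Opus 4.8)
The plan is to derive both properties at once by showing that the Jacobi operator $J$ from Definition~\ref{def:L-operator} is self-adjoint, so that its inverse $L=J^{-1}$ inherits self-adjointness and hence is automatically closed and densely defined. Recall that $J$ is the operator determined by the proper matrix $\mathcal{J}$ of~\eqref{eq:def_J}; properness means precisely $J_{\min}=J_{\max}=:J$, so $J$ is in any case closed.

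First I would exploit that $\mathcal{J}$ is a real symmetric matrix whose rows and columns are finitely supported, being tridiagonal, and therefore trivially square summable. For such (Hermitian) matrices the general theory recalled in~\cite{bec-smi_ms04,bec_jcam00} yields the identity $J_{\min}^{*}=J_{\max}$. Combining this with properness gives
\[
 J^{*}=J_{\max}^{*}=J_{\min}^{**}=\overline{J_{\min}}=J_{\min}=J,
\]
where I used that $J_{\min}$ is closed as the closure of $\dot{J}$. Thus $J$ is self-adjoint.

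Next, by Lemma~\ref{lem:tridiagonal_inverse}(ii), any solution of $\mathcal{J}\cdot x=0$ is constant, and the only constant sequence lying in $\ell^{2}(\N_{0})$ is the zero sequence; hence $\Ker J=\{0\}$ and $J$ is injective. Consequently $L=J^{-1}$ is a genuine linear operator with $\Dom L=\Ran J$.

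Finally I would invoke the standard fact that the inverse of an injective self-adjoint operator is self-adjoint, whence $L$ is closed and densely defined. Concretely, closedness of $L=J^{-1}$ holds because its graph is the image of the closed graph of $J$ under the continuous involution $(u,v)\mapsto(v,u)$ on $\ell^{2}(\N_{0})\oplus\ell^{2}(\N_{0})$, while density of $\Dom L=\Ran J$ follows from $\overline{\Ran J}=(\Ker J^{*})^{\perp}=(\Ker J)^{\perp}=\ell^{2}(\N_{0})$, using $\Ker J^{*}=\Ker J=\{0\}$. The only genuinely nontrivial ingredient, and thus the main obstacle, is the passage from properness of the symmetric matrix $\mathcal{J}$ to self-adjointness of $J$; once the identity $J_{\min}^{*}=J_{\max}$ is in hand, the remaining steps are applications of Lemma~\ref{lem:tridiagonal_inverse} and routine facts about inverses of self-adjoint operators.
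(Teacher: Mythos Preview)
Your argument hinges on $\mathcal{J}$ being Hermitian, but this is not guaranteed: Definition~\ref{def:L-matrix} allows the parameter sequence $a$ to be complex, so the entries $b_{n}=1/(a_{n}-a_{n+1})$ may be non-real, and then $\mathcal{J}$ is complex symmetric but not Hermitian. In that case the identity $J_{\min}^{*}=J_{\max}$ fails; for a complex symmetric tridiagonal matrix one has instead $(J_{\min})^{*}=(\overline{J})_{\max}$, where $\overline{J}$ denotes the operator determined by the conjugated matrix $\overline{\mathcal{J}}$. Hence $J$ need not be self-adjoint, and your step $\Ker J^{*}=\Ker J$ breaks down.

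The repair is easy and is essentially what the paper does. Closedness of $L$ follows from closedness of $J$ exactly as you argue, with no symmetry needed. For density one still has $(\Dom L)^{\perp}=(\Ran J)^{\perp}=\Ker J^{*}$, but now $x\in\Ker J^{*}$ means $\mathcal{J}^{*}\cdot x=\overline{\mathcal{J}}\cdot x=0$. Since $\overline{\mathcal{J}}$ has precisely the structure~\eqref{eq:def_J} with $b_{n}$ replaced by $\overline{b_{n}}$, Lemma~\ref{lem:tridiagonal_inverse}(ii) applies to it verbatim and forces $x$ constant, hence zero in $\ell^{2}(\N_{0})$. So your self-adjointness route is correct in the real case, but to cover the full statement you should bypass self-adjointness and argue directly with $\Ker J^{*}$ as above.
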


\begin{proof}
As an inverse of a closed operator $J$, the regular $L$-operator $L$ is closed.
Further, $(\Ran J)^{\perp}=\Ker J^{*}$ and, for $x\in\Ker J^{*}$, one has $\mathcal{J}^{*}\cdot x=0$, which implies $x=0$ by claim~(ii) of Lemma~\ref{lem:tridiagonal_inverse}, where $\mathcal{J}^{*}$ denotes the Jacobi matrix~\eqref{eq:def_J} with complex conjugated entries. It implies that $\Dom L$ is dense in $\ell^{2}(\N_{0})$.
\end{proof}

Of course, for an $L$-operator $L$  associated to a regular $L$-matrix $\mathcal{L}$, the matrix $\mathcal{L}$ is not the matrix representation of $L$ in the usual sense, which means
\[
 \mathcal{L}_{m,n}=\langle e_{m},Le_{n}\rangle, \quad \forall m,n\in\N_{0},
\]
because the vectors $e_{n}$ need not belong to $\Dom L$. This is guaranteed when we additionally have the parameter sequence $a\in\ell^{2}(\N_{0})$.

Even if $a\in\ell^{2}(\N_{0})$, the $L$-operator need not be bounded. For example, if $a_{n}=1/(n+1)^{\alpha}$, for $1/2<\alpha<1$, then $L$ is unbounded~\cite[Ex.~2]{bou-mas_oam21a}. A sufficient condition for boundedness of $L$ is~$a_{n}=O(1/n)$, as $n\to\infty$; see~\cite{bou-mas_oam21a}. In view of Definition~\ref{def:L-operator}, $L$ is bounded if and only if $0\notin\sigma(J)$. Recall that $0\notin\sigma_{\p}(J)$ as we know by claim~(ii) of Lemma~\ref{lem:tridiagonal_inverse}.

Note also that the mapping $\mathcal{L}\mapsto L$ between regular $L$-matrices and $L$-operators established by Definition~\ref{def:L-operator} is not injective. Indeed, it suffices to note that the matrix elements of~$\mathcal{J}$ depend only on the \emph{difference} between two consecutive elements of the parameter sequence $a$. Hence, an $L$-matrix with a parameter sequence $\{a_{n}\}_{n=0}^{\infty}$ gives rise to the same $L$-operator as the $L$-matrix with the parameter sequence $\{a_{n}+c\}_{n=0}^{\infty}$, where $c\in\C$. This ambiguity is removed when the admissible parameter sequences are restricted to the square summable.

\begin{example}\label{ex:simple_L_matrix}
 Consider the parameter sequence $a_{n}=n$, $n\in\N_{0}$. Then the $L$-operator $L$ associated to the $L$-matrix
 \[
  \mathcal{L}=\begin{pmatrix}
  0 & 1 & 2 & 3 & \dots\\
  1 & 1 & 2 & 3 & \dots\\
  2 & 2 & 2 & 3 & \dots\\
  3 & 3 & 3 & 3 & \dots\\
  \vdots & \vdots & \vdots & \vdots & \ddots
  \end{pmatrix}
 \]
 is the inverse of the Jacobi operator $J$ given by the tridiagonal matrix
 \[
  \mathcal{J}=\begin{pmatrix}
 				-1& 1 \\
 				1 & -2 & 1\\
  				& 1 & -2 & 1\\
  				& & 1 & -2 & 1\\
  				& & & \ddots & \ddots & \ddots
			 \end{pmatrix}.
 \]
 Operator $J$ is a discrete analogue to the Neumann Laplacian on the half-line and its spectral analysis is not very difficult~\cite{lap-kre-sta_prep21}. It can be shown that $J$ has simple spectrum 
\[ 
 \sigma(J)=\sigma_{\ac}(J)=[-4,0] 
\] 
and
\[
 \psi_{n}(\lambda):=U_{n}\left(1+\frac{\lambda}{2}\right)-U_{n-1}\left(1+\frac{\lambda}{2}\right), \quad n\in\N_{0},
\]
 is the $n$th entry of the generalized eigenvector to $\lambda\in[-4,0]$, where $U_{n}$ are Chebyshev polynomials of the second kind and $U_{-1}:=0$. In particular, it follows that the spectrum of~$L$ is simple and $\sigma(L)=\sigma_{\ac}(L)=(-\infty,-1/4]$.
\end{example}

\subsection{Factorization and positivity of regular $L$-matrices}\label{subsec:positivity}

First, it is useful to observe that, if $\mathcal{C}$ is a bi-diagonal semi-infinite matrix of the form
\begin{equation}\label{eq:matrix_C}
 \mathcal{C}=\begin{pmatrix}
  c_{0} & -c_{0} \\
  &   c_{1} & -c_{1} \\
  & &   c_{2} & -c_{2} \\
  & & & \ddots & \ddots
 \end{pmatrix},
\end{equation}
then
\[
 \mathcal{C}^{T}\cdot\mathcal{C}=\begin{pmatrix}
  c_{0}^{2} & -c_{0}^{2} \\
  -c_{0}^{2} & c_{0}^{2}+c_{1}^{2} & -c_{1}^{2} \\
  &   -c_{1}^{2} & c_{1}^{2}+c_{2}^{2} & -c_{2}^{2} \\
  & & \ddots & \ddots & \ddots 
 \end{pmatrix}.
\]
Comparing this observation with the structure of the Jacobi matrix $\mathcal{J}$ from~\eqref{eq:def_J}, it is clear that $\mathcal{J}$ decomposes as
\begin{equation}\label{eq:factor_J}
\mathcal{J}=\mathcal{B}^{T}\cdot\mathcal{B},
\end{equation}
where
\[
\mathcal{B}=\begin{pmatrix}
  \sqrt{b_{0}} & -\sqrt{b_{0}} \\
  &   \sqrt{b_{1}} & -\sqrt{b_{1}} \\
  & &   \sqrt{b_{2}} & -\sqrt{b_{2}} \\
  & & & \ddots & \ddots
 \end{pmatrix},
\]
provided that $b_{n}>0$, for all $n\in\N_{0}$.

\begin{prop}\label{prop:positivity_L_oper}
A real regular $L$-operator is positive semi-definite if and only if $a_{n}>a_{n+1}$, for all $n\in\N_{0}$.
\end{prop}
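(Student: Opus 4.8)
The plan is to transfer the whole question to the Jacobi operator $J=L^{-1}$ and to settle it at the level of the quadratic form. Since the parameter sequence $a$ is real, the matrix $\mathcal{J}$ from~\eqref{eq:def_J} is real and symmetric, and it is proper by claim~(iii) of Lemma~\ref{lem:tridiagonal_inverse}; hence $J$ is self-adjoint and so is $L=J^{-1}$. Recalling~\eqref{eq:def_b_n}, the condition $a_{n}>a_{n+1}$ for all $n\in\N_{0}$ is nothing but $b_{n}>0$ for all $n\in\N_{0}$, and regularity (already assumed) guarantees $a_{n}\neq a_{n+1}$, so each $b_{n}$ is well defined and nonzero. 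Moreover, $L$ is positive semi-definite if and only if $J$ is: writing $y=Jx$ with $x\in\Dom J$, one has $\langle y,Ly\rangle=\langle Jx,x\rangle$, and as $x$ ranges over $\Dom J$ the vector $y$ ranges over $\Ran J=\Dom L$, so the two quadratic forms exhibit the same sign behaviour. It therefore suffices to decide the positivity of $J$.

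Next I would record the form identity that makes the factorization~\eqref{eq:factor_J} meaningful for arbitrary signs of the $b_{n}$. For a finitely supported $x\in\spn\{e_{n}\mid n\in\N_{0}\}$, a direct computation from~\eqref{eq:def_J} (grouping the contributions of each $b_{n}$ from the two diagonal entries and the off-diagonal pair) gives
\[
 \langle \dot{J}x,x\rangle=\sum_{n=0}^{\infty}b_{n}(x_{n}-x_{n+1})^{2},
\]
which coincides with $\|\mathcal{B}\cdot x\|^{2}$ when all $b_{n}>0$, but which holds term by term regardless of the signs. Since $\spn\{e_{n}\mid n\in\N_{0}\}$ is a core for the self-adjoint operator $J$ (indeed $J=J_{\min}$ is the closure of $\dot{J}$), the operator $J$ is positive semi-definite if and only if the right-hand side above is nonnegative for every such $x$.

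The two implications then follow at once. If $a_{n}>a_{n+1}$, i.e.\ $b_{n}>0$, for all $n$, then every summand is nonnegative, so $J\geq0$ and hence $L\geq0$. Conversely, if the inequality is violated, say $a_{N}<a_{N+1}$ for some $N$, i.e.\ $b_{N}<0$, then I would test the form on $x=\sum_{k=0}^{N}e_{k}$; here the differences $x_{n}-x_{n+1}$ telescope to $0$ except for $n=N$, where the difference equals $1$, so the identity yields $\langle \dot{J}x,x\rangle=b_{N}<0$. Thus $J$, and with it $L$, fails to be positive semi-definite, which completes the contrapositive.

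The only genuinely delicate point is the unbounded-operator bookkeeping in the first paragraph: one must ensure that testing the form on the finitely supported core already decides the (semi)definiteness of the self-adjoint operators $J$ and $L$, and that the passage between $J\geq0$ and $L=J^{-1}\geq0$ is legitimate. Both facts are standard — the former because a self-adjoint operator bounded below on a core is bounded below, the map $x\mapsto\langle Jx,x\rangle$ being continuous in the graph norm; the latter because $y\mapsto J^{-1}y$ is a bijection of $\Dom L$ onto $\Dom J$ — so once the form identity is in hand no real obstacle remains, and the proof reduces to the two one-line sign checks above.
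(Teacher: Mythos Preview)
Your proof is correct. The ``if'' direction coincides with the paper's argument via the factorization~\eqref{eq:factor_J}, just phrased as the form identity $\langle \dot{J}x,x\rangle=\sum_{n}b_{n}(x_{n}-x_{n+1})^{2}$. For the ``only if'' direction, however, the paper takes a different route: rather than exhibiting a test vector, it observes that positive semi-definiteness of $J$ forces $\det J_{n}\geq0$ for every principal section $J_{n}$, computes $\det J_{n}=\prod_{k=0}^{n-1}b_{k}$, and then concludes $b_{k}>0$ by induction (using that no $b_{k}$ vanishes). Your approach is more direct and reuses the same form identity for both halves of the proof, which is a mild streamlining; the determinant route is the textbook Sylvester-type criterion but otherwise buys nothing extra here.
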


\begin{proof}
Clearly, an $L$-operator $L=J^{-1}$ is positive semi-definite if and only if the Jacobi operator $J$ is positive semi-definite. Also $a_{n}>a_{n+1}$, for all $n\in\N_{0}$, if and only if $b_{n}>0$, for all $n\in\N_{0}$, which follows readily from~\eqref{eq:def_b_n}. Consequently, we need to show that $J$ given by~\eqref{eq:def_J} is positive semi-definite if and only if $b_{n}>0$, for all $n\in\N_{0}$.

Suppose that $J$ is positive semi-definite, i.e., $\langle x,Jx\rangle\geq0$, for all $x\in\Dom J$. It particularly follows that
\[
 \det J_{n}\geq0, \quad \forall n\in\N,
\]
where $J_{n}\in\C^{n,n}$ denotes the $n\times n$ section of the matrix~$\mathcal{J}$, i.e., $J_{n}=P_{n}JP_{n}\upharpoonleft\Ran P_{n}$, where $P_{n}$ is the orthogonal projection onto $\spn\{e_{0},\dots,e_{n-1}\}$. It is a matter of simple linear algebra to verify that
\[
\det J_{n}=\prod_{k=0}^{n-1}b_{k}, \quad \forall n\in\N.
\]
Taking into account that $b_{n}\neq0$ by definition, the positivity of $b_{n}$, for all $n\in\N_{0}$, follows by induction.

Suppose, on the contrary, that $b_{n}>0$, $\forall n\in\N_{0}$. Decomposition~\eqref{eq:factor_J} implies that, for any $x\in\spn\{e_{n} \mid n\in\N_{0}\}$, one has
\[
\langle x,Jx\rangle=\|\mathcal{B}x\|^{2}\geq0.
\]
Since $\mathcal{J}$ is proper by claim~(iii) of Lemma~\ref{lem:tridiagonal_inverse}, $J=J_{\min}$. Therefore to any $x\in\Dom J$, there exists a sequence of vectors $x_{n}\in\spn\{e_{n} \mid n\in\N_{0}\}$ such that $x_{n}\to x$ and $Jx_{n}\to Jx$ in $\ell^{2}(\N_{0})$, as $n\to\infty$. Then $\langle x_{n},Jx_{n}\rangle\to\langle x,Jx\rangle$, as $n\to\infty$, and the inequality
$\langle x,Jx\rangle\geq0$ extends to all $x\in\Dom J$.
\end{proof}

\begin{rem}
 Note that, if $c_{n}\neq0$ for all $n\in\N_{0}$, then matrix $\mathcal{C}$ from~\eqref{eq:matrix_C} is formally invertible with the inverse
 \[
  \mathcal{C}^{-1}=\begin{pmatrix}
  \frac{1}{c_{0}} & \frac{1}{c_{1}} & \frac{1}{c_{2}} & \ldots \\
   0              & \frac{1}{c_{1}} & \frac{1}{c_{2}} & \ldots \\
   0              &      0          & \frac{1}{c_{2}} & \ldots \\
	\vdots        &   \vdots        &      \vdots     & \ddots
 \end{pmatrix}.
 \]
 This observation yields the following factorization of an $L$-matrix
 \[
 \mathcal{L}=\mathcal{A}^{T}\cdot\mathcal{A},
 \]
 where
 \[
 \mathcal{A}=\begin{pmatrix}
 \sqrt{a_0-a_1} & 0 & 0 & \ldots\\
 \sqrt{a_1-a_2} &  \sqrt{a_1-a_2} & 0 & \ldots\\
 \sqrt{a_2-a_3} &  \sqrt{a_2-a_3} & \sqrt{a_2-a_3} & \ldots \\
  \vdots        &   \vdots        &      \vdots     & \ddots
 \end{pmatrix},
 \] 
 provided that $a_{n}>a_{n+1}$, for all $n\in\N_{0}$, and $a_n\to0$, as $n\to\infty$. Matrices of this structure appear in literature as terraced matrices~\cite{rha_blms89} or $C$-matrices~\cite{bou-mas_laa22}. A decomposition of the Hilbert $L$-matrix into a product of Ces\`{a}ro matrices is also mentioned in~\cite{cho_amm83}.
\end{rem}

\subsection{Schatten norms and the Fredholm determinant}\label{subsec:determinant}

First, we prove a simple-to-check sufficient condition for an $L$-operator to be in the trace class. For $p\geq1$, we denote by $\mathscr{J}_{p}$ the $p$-th Schatten class and $\|\cdot\|_{\mathscr{J}_{p}}$ the $p$-th Schatten norm. In particular, $\mathscr{J}_{1}$ is the ideal of trace class operators and  $\|A\|_{\mathscr{J}_{1}}=\Tr|A|$.

\begin{thm}\label{thm:l-matrix_trace_class}
If a complex sequence $a$ fulfills the condition
\begin{equation}
 \sum_{k=1}^{\infty}\sqrt{k}|a_{k}|<\infty,
\label{eq:asum_sum_trace_class}
\end{equation}
then the $L$-matrix given by the parameter sequence $a$ is a matrix representation of a trace class operator $L$ on $\ell^{2}(\N_{0})$. Moreover, for any $p\geq1$, we have the following estimate:
\[
 \|L\|_{\mathscr{J}_{p}}\leq\sum_{k=0}^{\infty}|a_{k}|\left((\mu_{+}(k))^{p/2}+(\mu_{-}(k))^{p/2}\right)^{1/p},
\]
where
\[
 \mu_{\pm}(k)=k+\frac{1}{2}\pm\sqrt{k+\frac{1}{4}}.
\]
\end{thm}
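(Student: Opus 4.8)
The plan is to realize the $L$-matrix as a trace-norm convergent series of elementary finite-rank ``hook'' matrices and to compute the Schatten norm of each summand exactly. For each $k\in\N_{0}$ introduce the symmetric matrix $\mathcal{M}_{k}$ whose $(m,n)$-entry equals $1$ when $\max(m,n)=k$ and $0$ otherwise; geometrically $\mathcal{M}_{k}$ is supported on the $k$-th row and the $k$-th column. Since for every pair $(m,n)$ exactly one index contributes, namely $k=\max(m,n)$, one has the pointwise identity $\mathcal{L}=\sum_{k=0}^{\infty}a_{k}\mathcal{M}_{k}$, which is purely algebraic and requires no summability of $a$. Writing $v_{k}:=e_{0}+e_{1}+\cdots+e_{k}$, I would record the rank-two representation $\mathcal{M}_{k}=e_{k}v_{k}^{T}+v_{k}e_{k}^{T}-e_{k}e_{k}^{T}$, which exhibits $\mathcal{M}_{k}$ as a real symmetric operator of rank at most two with range contained in the invariant subspace $W:=\spn\{e_{k},v_{k}\}$.

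The crux is the exact computation of the singular values of $\mathcal{M}_{k}$. Because $\mathcal{M}_{k}$ is real symmetric, its singular values are the absolute values of its eigenvalues, and since $W$ is invariant the problem reduces to a $2\times2$ one. Using $v_{k}^{T}e_{k}=1$ and $v_{k}^{T}v_{k}=k+1$, I would verify $\mathcal{M}_{k}e_{k}=v_{k}$ and $\mathcal{M}_{k}v_{k}=k\,e_{k}+v_{k}$, so that in the basis $(e_{k},v_{k})$ the restriction $\mathcal{M}_{k}|_{W}$ is represented by $\left(\begin{smallmatrix}0&k\\1&1\end{smallmatrix}\right)$, with characteristic equation $\lambda^{2}-\lambda-k=0$. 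The two nonzero eigenvalues are therefore $\tfrac12\pm\sqrt{k+\tfrac14}$, the singular values are $\sqrt{k+\tfrac14}\pm\tfrac12$, and squaring gives exactly $\mu_{\pm}(k)=\big(\sqrt{k+1/4}\pm1/2\big)^{2}=k+\tfrac12\pm\sqrt{k+\tfrac14}$. Consequently $\|\mathcal{M}_{k}\|_{\mathscr{J}_{p}}=\big((\mu_{+}(k))^{p/2}+(\mu_{-}(k))^{p/2}\big)^{1/p}$ for every $p\geq1$, and in particular $\|\mathcal{M}_{k}\|_{\mathscr{J}_{1}}=2\sqrt{k+1/4}$.

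Finally I would assemble the estimate. The asymptotics $2\sqrt{k+1/4}=O(\sqrt{k})$ together with hypothesis~\eqref{eq:asum_sum_trace_class} give $\sum_{k}|a_{k}|\,\|\mathcal{M}_{k}\|_{\mathscr{J}_{1}}<\infty$, so the series $\sum_{k}a_{k}\mathcal{M}_{k}$ converges absolutely in the trace norm; its sum is thus a trace class operator $L$. Since trace-norm convergence implies operator-norm convergence, the matrix elements pass to the limit and $\langle e_{m},Le_{n}\rangle=a_{\max(m,n)}=\mathcal{L}_{m,n}$, so $\mathcal{L}$ indeed represents $L$. For general $p\geq1$ the inclusion $\|\cdot\|_{\mathscr{J}_{p}}\leq\|\cdot\|_{\mathscr{J}_{1}}$ shows the same series converges in $\mathscr{J}_{p}$, and the triangle inequality in $\mathscr{J}_{p}$ yields $\|L\|_{\mathscr{J}_{p}}\leq\sum_{k}|a_{k}|\,\|\mathcal{M}_{k}\|_{\mathscr{J}_{p}}$, which is precisely the claimed bound. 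The only genuinely delicate step is the eigenvalue computation of $\mathcal{M}_{k}$; the rest is the triangle inequality for Schatten norms combined with the elementary estimate $2\sqrt{k+1/4}\sim2\sqrt{k}$.
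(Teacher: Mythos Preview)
Your proof is correct and follows essentially the same approach as the paper's: decompose $\mathcal{L}=\sum_{k}a_{k}\mathcal{M}_{k}$ into the rank-two ``hook'' matrices (the paper calls them $F_{k}$), compute their singular values to get $\|\mathcal{M}_{k}\|_{\mathscr{J}_{p}}=\big((\mu_{+}(k))^{p/2}+(\mu_{-}(k))^{p/2}\big)^{1/p}$, and apply the triangle inequality in $\mathscr{J}_{p}$. The only difference is that the paper merely asserts that the nonzero eigenvalues of $F_{k}^{2}$ are $\mu_{\pm}(k)$, whereas you supply an explicit $2\times2$ reduction on the invariant subspace $\spn\{e_{k},v_{k}\}$ to verify this; your computation is correct (eigenvalues do not depend on the choice of basis, so the non-orthogonality of $(e_{k},v_{k})$ is harmless).
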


\begin{proof}
For $k\in\N_{0}$, we denote by $F_{k}$ the finite rank operator given by the $L$-matrix with parameter sequence $f_{k}=\{\delta_{k,n}\}_{n=0}^{\infty}$, where $\delta_{k,n}=1$, if $k=n$, and $\delta_{k,n}=0$, if $k\neq n$. One readily computes that, for $k\in\N$, the only nonzero eigenvalues of $F_{k}^{2}$ are $\mu_{\pm}(k)$ and these are simple. Consequently, for any $p\geq1$ and $k\in\N$, one has
\[
 \|F_{k}\|_{\mathscr{J}_{p}}=\left((\mu_{+}(k))^{p/2}+(\mu_{-}(k))^{p/2}\right)^{1/p}.
\]
Since $\|F_{0}\|_{\mathscr{J}_{p}}=1$, the last equality remains true also for $k=0$.
Then the assumption \eqref{eq:asum_sum_trace_class} guarantees that the series
\[
 \sum_{k=0}^{\infty}a_{k}F_{k}
\]
converges in the trace class norm and the limit is an operator $L$ whose matrix representation coincides with the $L$-matrix with parameter sequence $a$. The second claim from the statement follows from the estimate
\[
 \|L\|_{\mathscr{J}_{p}}\leq\sum_{k=0}^{\infty}|a_{k}|\|F_{k}\|_{\mathscr{J}_{p}}.
\]
\end{proof}

\begin{rem}
 Let us remark that condition~\eqref{eq:asum_sum_trace_class} is not necessary. Notice that, if $L$ is a regular $L$-operator with strictly decreasing parameter sequence~$a$, then $L\geq0$ by Proposition~\ref{prop:positivity_L_oper} and hence
 \[
  L\in\mathscr{J}_{1} \quad\Leftrightarrow\quad \|L\|_{\mathscr{J}_{1}}=\sum_{n=0}^{\infty}\left(a_{n}-a_{\infty}\right)<\infty,
 \]
 where $a_{\infty}:=\lim_{n\to\infty}a_{n}$. This means that, for example, $a_{n}=1/(n+1)^{\alpha}$ is a parameter sequence of a trace class $L$-operator, for any $\alpha>1$, however, condition~\eqref{eq:asum_sum_trace_class} is not fulfilled if $\alpha\in(1,3/2]$.
\end{rem}

Recall that for a trace class operator $L$, the Fredholm determinant $\det(1-zL)$, also referred to as the \emph{characteristic function} of $L$, is a well-defined entire function of~$z$ and its zeros coincides with the reciprocal values of non-zero eigenvalues of $L$ together with their multiplicities, see~\cite[Sec.~XIII.17]{reed-simon_IV}. Since there exist formulas for the characteristic function of certain Jacobi operators, see~\cite{sta_ieot17, sta-sto_laa13}, and one may also recall the Fredholm formula for the characteristic function of integral operators~\cite[Thm.~3.10]{simon_79}, it is likely that there is a formula for the characteristic function of a trace class $L$-operator expressed directly in terms of its parameter sequence. Such formula might be useful for spectral analysis of trace class $L$-operators, however, a more detailed investigation of these aspects should be a~subject of a~separate study. At this point, we provide only a limit formula for the characteristic function expressed in terms of the family of orthogonal polynomials corresponding to Jacobi matrix~\eqref{eq:def_J} and demonstrate its application on an interesting example below.

Let $\{p_{n}\}_{n=0}^{\infty}$ denotes the sequence of monic orthogonal polynomials determined by~\eqref{eq:def_J}, i.e., 
\begin{equation}
 p_{n}(z):=\det(z-J_{n}),
\label{eq:def_OGP}
\end{equation}
where $J_{n}:=P_{n}JP_{n}\upharpoonleft\Ran P_{n}$ and $P_{n}$ is the orthogonal projection onto $\spn\{e_{0},\dots,e_{n-1}\}$. Alternatively, polynomials $p_{n}$ are determined recursively by the recurrence
\[
 p_{n+1}(z)=(z-b_{n-1}-b_{n})p_{n}(z)-b_{n-1}^{2}p_{n-1}(z), \quad n\in\N,
\]
with the initial conditions $p_{0}(z)=1$ and $p_{1}(z)=z-b_{0}$. Recall~\eqref{eq:def_b_n}.

\begin{thm}\label{thm:char_func_L-oper}
 Suppose $L$ be a regular trace class $L$-operator with parameter sequence $a$. Then 
 \[
  \det(1-zL)=\lim_{n\to\infty}(-1)^{n}\left[\prod_{j=1}^{n-1}(a_{j-1}-a_{j})\right]\!\left(a_{n-1}p_{n}(z)+a_{n}b_{n-1}p_{n-1}(z)\right)\!,
 \]
 for all $z\in\C$.
\end{thm}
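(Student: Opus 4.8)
The plan is to reduce the claim to a finite-dimensional determinant computation and then to identify that determinant explicitly in terms of the orthogonal polynomials $p_{n}$. Since $L$ is trace class, I would first show that the finite sections $L_{n}:=P_{n}LP_{n}$ converge to $L$ in the trace norm. This follows from the decomposition $L-P_{n}LP_{n}=(1-P_{n})L+P_{n}L(1-P_{n})$ together with the standard fact that $\|C_{n}A\|_{\mathscr{J}_{1}}\to0$ whenever $A\in\mathscr{J}_{1}$ and $C_{n}\to0$ strongly while staying uniformly bounded. Because the Fredholm determinant is continuous with respect to the $\mathscr{J}_{1}$-norm, I then obtain $\det(1-zL)=\lim_{n\to\infty}\det(1-zL_{n})=\lim_{n\to\infty}\det(I_{n}-z\mathcal{L}_{n})$ for every $z\in\C$, where $\mathcal{L}_{n}$ denotes the leading $n\times n$ principal submatrix of $\mathcal{L}$ and $I_{n}$ the $n\times n$ identity. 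It therefore suffices to prove that $\det(I_{n}-z\mathcal{L}_{n})$ equals the $n$th term of the asserted limit.

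The core of the argument is the identification of the inverse of the finite section $\mathcal{L}_{n}$. Assuming first that $a_{n-1}\neq0$ (the general case following by continuity in the parameter sequence, both sides being polynomials in $z$), I claim that $\mathcal{L}_{n}^{-1}$ is the tridiagonal matrix $\tilde{\mathcal{J}}_{n}$ that coincides with the truncation $\mathcal{J}_{n}$ of $\mathcal{J}$ except for its bottom-right entry, where $b_{n-2}+b_{n-1}$ is replaced by $b_{n-2}+1/a_{n-1}$. This is the natural finite analogue of Lemma~\ref{lem:tridiagonal_inverse}(i): the finite $L$-matrix $\mathcal{L}_{n}$ behaves as if the parameter sequence were capped by setting the next entry to zero, which turns the boundary coefficient $b_{n-1}=1/(a_{n-1}-a_{n})$ into $1/a_{n-1}$. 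I would verify the identity $\tilde{\mathcal{J}}_{n}\mathcal{L}_{n}=I_{n}$ by the same telescoping computation as in the proof of Lemma~\ref{lem:tridiagonal_inverse}(i), checking separately the boundary row and column.

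With this in hand I would write $I_{n}-z\mathcal{L}_{n}=\mathcal{L}_{n}(\tilde{\mathcal{J}}_{n}-zI_{n})$, so that $\det(I_{n}-z\mathcal{L}_{n})=(-1)^{n}\det(\mathcal{L}_{n})\det(zI_{n}-\tilde{\mathcal{J}}_{n})$. Writing $\tilde{\mathcal{J}}_{n}=\mathcal{J}_{n}+c\,e_{n-1}e_{n-1}^{T}$ with $c=1/a_{n-1}-b_{n-1}$ and applying the matrix determinant lemma together with the cofactor identity $e_{n-1}^{T}(zI_{n}-\mathcal{J}_{n})^{-1}e_{n-1}=p_{n-1}(z)/p_{n}(z)$ (valid since deleting the last row and column of $\mathcal{J}_{n}$ returns $\mathcal{J}_{n-1}$), I obtain $\det(zI_{n}-\tilde{\mathcal{J}}_{n})=p_{n}(z)-c\,p_{n-1}(z)$. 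Evaluating the same formula at $z=0$ and using $\det\mathcal{J}_{n}=\prod_{k=0}^{n-1}b_{k}$ gives $\det\tilde{\mathcal{J}}_{n}=a_{n-1}^{-1}\prod_{k=0}^{n-2}b_{k}$, hence $\det\mathcal{L}_{n}=a_{n-1}\prod_{k=0}^{n-2}b_{k}^{-1}$. Substituting and using the elementary identity $a_{n-1}b_{n-1}-1=a_{n}b_{n-1}$, which follows from $b_{n-1}=1/(a_{n-1}-a_{n})$, collapses everything, after reindexing $\prod_{k=0}^{n-2}b_{k}^{-1}=\prod_{k=0}^{n-2}(a_{k}-a_{k+1})=\prod_{j=1}^{n-1}(a_{j-1}-a_{j})$, to
\[
 \det(I_{n}-z\mathcal{L}_{n})=(-1)^{n}\Big[\prod_{j=1}^{n-1}(a_{j-1}-a_{j})\Big]\big(a_{n-1}p_{n}(z)+a_{n}b_{n-1}p_{n-1}(z)\big).
\]
Passing to the limit $n\to\infty$ then finishes the proof.

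I expect the main obstacle to be the explicit identification of $\mathcal{L}_{n}^{-1}$ and, in particular, the correct form of its modified corner entry; once that is established, the rest is bookkeeping with the rank-one determinant formula and the recurrence for $p_{n}$. A secondary technical point worth stating carefully is the trace-norm convergence $L_{n}\to L$, which legitimizes interchanging the determinant with the limit, together with the continuity argument that removes the harmless assumption $a_{n-1}\neq0$.
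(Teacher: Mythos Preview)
Your proposal is correct and follows essentially the same approach as the paper: both reduce to $\det(I_{n}-z\mathcal{L}_{n})$ via trace-norm approximation by finite sections, identify $\mathcal{L}_{n}^{-1}$ as the rank-one perturbation of $\mathcal{J}_{n}$ at the bottom-right entry (the paper writes this as $L_{n}^{-1}=J_{n}-\tfrac{a_{n}b_{n-1}}{a_{n-1}}e_{n}e_{n}^{T}$, which is exactly your $\tilde{\mathcal{J}}_{n}$), and then evaluate the determinant via the matrix determinant lemma together with the cofactor relation $e_{n-1}^{T}(zI_{n}-\mathcal{J}_{n})^{-1}e_{n-1}=p_{n-1}(z)/p_{n}(z)$. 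Your route is even marginally cleaner, since you apply the determinant lemma directly to $zI_{n}-\tilde{\mathcal{J}}_{n}$ instead of to $1-z^{-1}J_{n}$ as the paper does, thereby avoiding the paper's temporary assumption $z^{-1}\notin\sigma(J_{n})$, and you spell out the trace-norm convergence that the paper merely invokes.
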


\begin{proof}
 First, recall that, if $L\in\mathscr{J}_{1}$, then 
 \[
 \det(1-zL)=\lim_{n\to\infty}\det(1-zP_{n}LP_{n})=\lim_{n\to\infty}\det(1-zL_{n}),
 \] 
 where $L_{n}$ denotes the $n\times n$ section of the $L$-matrix $\mathcal{L}$, analogously as for $J_{n}$. The identity operator $1$ acts either on $\ell^{2}(\N_{0})$ or $\C^{n}$ depending on the context without being explicitly distinguished in the notation. Thus, in the remaining part of the proof, it suffices to verify the identity
 \begin{equation}
 \det(1-zL_{n})=(-1)^{n}\left[\prod_{j=1}^{n-1}(a_{j-1}-a_{j})\right]\!\left(a_{n-1}p_{n}(z)+a_{n}b_{n-1}p_{n-1}(z)\right).
 \label{eq:det_L_n_OGP}
 \end{equation}
 
 Fix $n\in\N$ and suppose additionally that $a_{n-1}\neq0$. This means that $L_{n}$ is invertible since
 \begin{equation}
  \det L_{n}=a_{n-1}\prod_{j=1}^{n-1}(a_{j-1}-a_{j}).
 \label{eq:det_L_n}
 \end{equation}
 It is straightforward to check that
 \[
  L_{n}J_{n}=1+\frac{a_{n}b_{n-1}}{a_{n-1}}L_{n}e_{n}e_{n}^{T},
 \]
 where we denote $e_{n}:=(0,\dots,0,1)^{T}\in\C^{n}$ with some abuse of notation. It follows that
 \begin{equation}
  L_{n}^{-1}=J_{n}-\frac{a_{n}b_{n-1}}{a_{n-1}}e_{n}e_{n}^{T}.
  \label{eq:L_n_inverse}
 \end{equation}
 
 In view of~\eqref{eq:det_L_n} and~\eqref{eq:L_n_inverse}, we have
 \begin{align*}
  \det(1-zL_{n})&=(-1)^{n}z^{n}\det(L_{n})\det\!\left(1-\frac{1}{z}L_{n}^{-1}\right)\\
  				&=(-1)^{n}z^{n}a_{n-1}\left[\prod_{j=1}^{n-1}(a_{j-1}-a_{j})\right]\det\!\left(1-\frac{1}{z}J_{n}+\frac{a_{n}b_{n-1}}{z a_{n-1}}e_{n}e_{n}^{T}\right)
 \end{align*}
 Next, by applying the identity
 \begin{equation}
  \det(A+cd^{T})=\left(1+d^{T}A^{-1}c\right)\det A,
 \label{eq:determinant_A_plus_rank-one}
 \end{equation}
 which holds true for any $c,d\in\C^{n}$ and invertible matrix $A\in\C^{n,n}$, we see that $\det(1-zL_{n})$
 equals
 \[
  (-1)^{n}z^{n}a_{n-1}\left[\prod_{j=1}^{n-1}(a_{j-1}-a_{j})\right]\left[1+\frac{a_{n}b_{n-1}}{z a_{n-1}}e_{n}^{T}\left(1-\frac{1}{z}J_{n}\right)^{-1}e_{n}\right]\det\!\left(1-\frac{1}{z}J_{n}\right),
 \]
 provided that $z^{-1}\notin\sigma(J_{n})$. If we also recall the well-know relation between the inverse of an invertible matrix $A$ and the adjugate matrix of $A$, which particularly implies that
 \[
  e_{n}^{T}Ae_{n}=\frac{\det A[n]}{\det A},
 \]
 where $A[n]$ denotes the matrix $A$ whose last column and row are deleted, we obtain
 \begin{align*}
  \det(1-zL_{n})&=(-z)^{n}a_{n-1}\left[\prod_{j=1}^{n-1}(a_{j-1}-a_{j})\right]\left[\det\!\left(1-\frac{1}{z}J_{n}\right)+\frac{a_{n}b_{n-1}}{z a_{n-1}}\det\!\left(1-\frac{1}{z}J_{n-1}\right)\right]\\
  &=(-1)^{n}\left[\prod_{j=1}^{n-1}(a_{j-1}-a_{j})\right]\left[a_{n-1}\det\!\left(z-J_{n}\right)+a_{n}b_{n-1}\det\!\left(z-J_{n-1}\right)\right].
 \end{align*}
 If the last expression is written in terms of the orthogonal polynomials~\eqref{eq:def_OGP}, we arrive at~\eqref{eq:det_L_n_OGP}. Clearly, the additional assumptions $a_{n-1}\neq0$ and $z^{-1}\notin\sigma(J_{n})$ imposed during the derivation can be removed and formula~\eqref{eq:det_L_n_OGP} extends to all $z\in\C$ and arbitrary sequence $a$.
\end{proof}

We conclude this section by an interesting example of a trace class $L$-operator whose characteristic function can be computed with the aid of Theorem~\ref{thm:char_func_L-oper} in terms of well-known special functions.

\begin{example}\label{ex:L-matrix_exponential}
We investigate the $L$-operator $L$ with exponential matrix entries:
\[
 \mathcal{L}=\begin{pmatrix}
 				1 & q & q^{2} & \dots\\
 				q & q & q^{2} & \dots\\
  				q^{2} & q^{2} & q^{2} & \dots\\
  				\vdots & \vdots & \vdots & \ddots
			 \end{pmatrix},
\]
i.e., $a_{n}=q^{n}$, where $q\in(0,1)$. By Proposition~\ref{prop:positivity_L_oper} and Theorem~\ref{thm:l-matrix_trace_class}, $L$ is a positive semi-definite trace class operator. By~\eqref{eq:def_b_n}, we have
\[
 b_{n}=\frac{1}{a_{n+1}-a_{n}}=\frac{q^{-n}}{1-q}
\]
for $n\in\N_{0}$. Define also $b_{-1}:=q/(1-q)$. Our goal is to make use of known asymptotic properties of the $q$-Chebyshev polynomials studied by Ismail and Mulla in~\cite{ism-mul_siamjma87} to deduce an~asymptotic formula for the orthogonal polynomials $p_{n}(x)=\det(x-J_{n})$, for $n\to\infty$. Then an application of Theorem~\ref{thm:char_func_L-oper} provides us with the characteristic function of $L$. To this end, we introduce the rank-one perturbation matrix
\[
 \tilde{J}_{n}:=J_{n}+b_{-1}e_{0}e_{0}^{T},
\]
the corresponding orthogonal polynomials $\tilde{p}_{n}(x):=\det(x-\tilde{J}_{n})$, and also the associated orthogonal polynomials $\tilde{q}_{n-1}(x):=\det(x-\tilde{J}_{n}[1])$, where $\tilde{J}_{n}[1]$ is the matrix $\tilde{J}_{n}$ with deleted first row and column. Recalling once more identity~\eqref{eq:determinant_A_plus_rank-one}, we have
\begin{equation}
 p_{n}(x)=\left(1+b_{-1}e_{0}^{T}(x-\tilde{J}_{n}e_{0})^{-1}e_{0}\right)\det\left(x-\tilde{J}_{n}\right)=\tilde{p}_{n}(x)+b_{-1}\tilde{q}_{n-1}(x),
\label{eq:tow_p_n_ident}
\end{equation}
where we have used the identity
\[
 e_{0}^{T}(x-\tilde{J}_{n}e_{0})^{-1}e_{0}=\frac{\tilde{q}_{n-1}(x)}{\tilde{p}_{n}(x)}.
\]

By using the three-term recurrences for polynomials $\tilde{p}_{n}$ and $\tilde{q}_{n}$, one can relate them to the $q$-Chebyshev polynomials defined by the recurrence
\[
 \theta^{(a)}_{n+1}(x;q)=(2x-aq^{n})\theta^{(a)}_{n}(x;q)-\theta^{(a)}_{n-1}(x;q), \quad n\in\N_{0},
\]
and initial conditions  $\theta^{(a)}_{-1}(x;q)=0$ and $\theta^{(a)}_{0}(x;q)=1$, where $a$ is a real parameter. Namely, one arrives at formulas
\begin{equation}
 \tilde{p}_{n}(x)=\frac{(-1)^{n}q^{-n(n-2)/2}}{(1-q)^{n}}\theta_{n}^{\left((q^{-1/2}-q^{1/2})x\right)}\left(\frac{q^{1/2}+q^{-1/2}}{2};q\right)
 \label{eq:p_n_q-cheb}
\end{equation}
and
\begin{equation}
 \tilde{q}_{n}(x)=\frac{(-1)^{n}q^{-n^{2}/2}}{(1-q)^{n}}\theta_{n}^{\left((q^{-1/2}-q^{1/2})qx\right)}\left(\frac{q^{1/2}+q^{-1/2}}{2};q\right).
 \label{eq:q_n_q-cheb}
\end{equation}
In~\cite[Thm.~2.1]{ism-mul_siamjma87}, it was proved that, for $x\notin[-1,1]$, one has the asymptotic formula
\[
  \theta^{(a)}_{n}(x;q)=A^{n}\left[\sum_{k=0}^{\infty}\frac{q^{k(k-1)/2}(-aB)^{k}}{(B/A;q)_{k+1}(q;q)_{k}}\right]\left(1+o(1)\right), \quad n\to\infty,
\]
where $A$ and $B$ are the roots of the quadratic equation $1-2xt+t^{2}=0$ such that $|A|\geq|B|$ and
$(\alpha;q)_{k}=(1-\alpha)(1-\alpha q)\dots(1-\alpha q^{k-1})$ is the $q$-Pochhammer symbol.
If applied in~\eqref{eq:p_n_q-cheb} and \eqref{eq:q_n_q-cheb} and using~\eqref{eq:tow_p_n_ident}, we compute the asymptotic expansion
\begin{align*}
p_{n}(x)&=\frac{(-1)^{n}q^{-n(n-1)/2}}{(1-q)^{n}}\left(\sum_{k=0}^{\infty}\frac{q^{k(k-1)/2}}{(q;q)_{k}^{2}}(1-q)^{k}(-x)^{k}\right)\left[1+o(1)\right]\\
&=\frac{(-1)^{n}q^{-n(n-1)/2}}{(1-q)^{n}}\pPhiq{1}{1}{0}{q}{(1-q)x}\left[1+o(1)\right], \quad n\to\infty,
\end{align*}
where we have used the standard notation for the basic hypergeometric function, see~\cite{gas-rah_04}.
Now, an application of Theorem~\ref{thm:char_func_L-oper} and simple manipulations yield the desired formula:
\begin{equation}
 \det(1-zL)=\pPhiq{1}{1}{0}{q}{(1-q)z}, \quad z\in\C.
\label{eq:char_func_geometric}
\end{equation}

Characteristic function~\eqref{eq:char_func_geometric} can be identified with the third Jackson $q$-Bessel function, also known as the Hahn--Exton $q$-Bessel function, of order zero
\[
 J_{0}^{(3)}(z;q):=\pPhiq{1}{1}{0}{q}{qz^{2}}.
\]
Properties of the third Jackson $q$-Bessel functions were intensively studied in literature~\cite{ann-mas_mpcps09, koe-swa_jmaa94, koe-vanass_ca95, swarttow-phd92, sta-sto_sm13}. Concerning their zeros, they are not known explicitly similarly to the case of classical Bessel functions. Nevertheless, properties of these zeros, that are readily related to the spectrum of $L$, were also investigated in separate works; see, for instance, \cite[Prop.~A.3]{sta-sto_jat16} for an asymptotic formula for the zeros and~\cite{abr-bus-car_ijmms03} for explicit bounds on the zeros.
\end{example}

\section{Spectral analysis of the Hilbert $L$-operator}\label{sec:Hilbert_L-matrix}

Our main goal is a spectral analysis of the Hilbert $L$-operator $L_{\nu}$ determined by the parameter sequence
\[
a_{n}(\nu)=\frac{1}{n+\nu}, \quad n\in\N_{0},
\]
for $\nu\in\R\setminus(-\N_{0})$. The inverse $J_{\nu}:=L_{\nu}^{-1}$ is an unbounded Jacobi operator given by the Jacobi matrix~\eqref{eq:def_J}, where
\[
b_{n}=b_{n}(\nu)=\frac{1}{a_{n}(\nu)-a_{n+1}(\nu)}=(n+\nu)(n+\nu+1), \quad n\in\N_{0}.
\]
Operator $J_{\nu}$ is self-adjoint which is a consequence of claim~(iii) of Lemma~\ref{lem:tridiagonal_inverse}. Moreover, $J_{\nu}$ is semi-bounded as shows the following lemma.

\begin{lem}\label{lem:J_positive}
 For all $\phi\in\spn\{e_{n}\mid n\in\N_{0}\}$, one has
 \[
 \langle\phi,J_{\nu}\phi\rangle= \nu|\phi_{0}|^{2}+\sum_{n=1}^{\infty}\left|(n+\nu)\phi_{n}-(n-1+\nu)\phi_{n-1}\right|^{2}.
 \]
 In particular, $J_{\nu}\geq\min(0,\nu)$.
\end{lem}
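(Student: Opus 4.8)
The plan is to rewrite the tridiagonal quadratic form of $J_{\nu}$ as a genuine sum of squares corrected by a single rank-one term, in the same spirit as the factorization $\mathcal{J}=\mathcal{B}^{T}\cdot\mathcal{B}$ from Subsection~\ref{subsec:positivity}. The decisive observation is that, writing $c_{n}:=n+\nu$, the off-diagonal weights of the Hilbert case factorize as $b_{n}=(n+\nu)(n+\nu+1)=c_{n}c_{n+1}$, so that each entry $-b_{n}$ of $\mathcal{J}$ is precisely the cross term of the perfect square $|c_{n+1}\phi_{n+1}-c_{n}\phi_{n}|^{2}$. Accordingly I would introduce the real bidiagonal matrix $\mathcal{R}$ acting by $(\mathcal{R}\phi)_{n}=c_{n+1}\phi_{n+1}-c_{n}\phi_{n}$ and claim the matrix identity
\[
 \mathcal{J}=\nu\,e_{0}e_{0}^{T}+\mathcal{R}^{T}\cdot\mathcal{R}.
\]

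To establish this identity it suffices to compare the three nonzero diagonals. On the main diagonal one checks $c_{0}^{2}+\nu=\nu^{2}+\nu=b_{0}$ for the top entry, and, for $n\geq1$, that the $n$-th diagonal entry of $\mathcal{R}^{T}\cdot\mathcal{R}$ equals $2c_{n}^{2}=c_{n}(c_{n-1}+c_{n+1})=b_{n-1}+b_{n}$, using $c_{n-1}+c_{n+1}=2c_{n}$; on the super- and sub-diagonal the only surviving product is $-c_{n}c_{n+1}=-b_{n}$. Since $\phi$ is finitely supported and $\mathcal{J}$ is banded, $\langle\phi,J_{\nu}\phi\rangle=\langle\phi,\mathcal{J}\phi\rangle$ is a finite sum, and as $\mathcal{R}$ is real the identity yields $\langle\phi,J_{\nu}\phi\rangle=\nu|\phi_{0}|^{2}+\|\mathcal{R}\phi\|^{2}$. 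Expanding $\|\mathcal{R}\phi\|^{2}=\sum_{n=0}^{\infty}|c_{n+1}\phi_{n+1}-c_{n}\phi_{n}|^{2}$ and reindexing $n\mapsto n-1$ gives exactly the asserted formula.

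For the semiboundedness, I would discard the nonnegative term $\|\mathcal{R}\phi\|^{2}$ to obtain $\langle\phi,J_{\nu}\phi\rangle\geq\nu|\phi_{0}|^{2}$. If $\nu\geq0$ this is nonnegative, matching $\min(0,\nu)=0$; if $\nu<0$, then $|\phi_{0}|^{2}\leq\|\phi\|^{2}$ together with $\nu<0$ gives $\nu|\phi_{0}|^{2}\geq\nu\|\phi\|^{2}$, matching $\min(0,\nu)=\nu$. Thus $\langle\phi,J_{\nu}\phi\rangle\geq\min(0,\nu)\|\phi\|^{2}$ on $\spn\{e_{n}\mid n\in\N_{0}\}$, and since $J_{\nu}$ is self-adjoint and coincides with the closure of its restriction to this core (properness, claim~(iii) of Lemma~\ref{lem:tridiagonal_inverse}), the bound extends to all of $\Dom J_{\nu}$, giving $J_{\nu}\geq\min(0,\nu)$.

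I do not anticipate a real obstacle: once the factorization $b_{n}=c_{n}c_{n+1}$ is noticed the computation is elementary, and this factorization is exactly what makes the lemma's decomposition superior to the generic one $\langle\phi,J_{\nu}\phi\rangle=\sum_{n}b_{n}|\phi_{n}-\phi_{n+1}|^{2}$, which is useless when some $b_{n}<0$ (that is, for $-1<\nu<0$). The only points requiring mild care are the asymmetric treatment of the index $n=0$ in the diagonal check --- which is precisely where the residual term $\nu e_{0}e_{0}^{T}$ originates --- and the routine final step upgrading the quadratic-form inequality on the finitely supported core to an operator inequality.
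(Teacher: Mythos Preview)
Your proof is correct and is essentially the same argument as the paper's: both verify the identity $\langle\phi,J_{\nu}\phi\rangle=\nu|\phi_{0}|^{2}+\sum_{n\geq1}|(n+\nu)\phi_{n}-(n-1+\nu)\phi_{n-1}|^{2}$ via the factorization $b_{n}=(n+\nu)(n+\nu+1)$, you by checking the matrix identity $\mathcal{J}=\nu\,e_{0}e_{0}^{T}+\mathcal{R}^{T}\mathcal{R}$ entrywise, the paper by the equivalent direct splitting and index shift of the sums. Your packaging as a rank-one correction of a $\mathcal{R}^{T}\mathcal{R}$ factorization is a bit more conceptual, but the underlying computation and the semiboundedness deduction are identical.
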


\begin{proof}
 Suppose $\phi\in\spn\{e_{n}\mid n\in\N_{0}\}$. Then
 \begin{align*}
   \langle\phi,J_{\nu}\phi\rangle&=\nu(\nu+1)|\phi_{0}|^{2}-\nu(\nu+1)\overline{\phi_{0}}\phi_{1}\\
   &\hskip8pt+\sum_{n=1}^{\infty}\overline{\phi_{n}}\left[-(n-1+\nu)(n+\nu)\phi_{n-1}+2(n+\nu)^{2}\phi_{n}-(n+\nu)(n+1+\nu)\phi_{n+1}\right]\\
   &=\nu|\phi_{0}|^{2}+\sum_{n=1}^{\infty}(n+\nu)\overline{\phi_{n}}\left[(n+\nu)\phi_{n}-(n-1+\nu)\phi_{n-1}\right]\\
   &\hskip144pt-\sum_{n=0}^{\infty}(n+\nu)\overline{\phi_{n}}\left[(n+1+\nu)\phi_{n+1}-(n+\nu)\phi_{n}\right].
 \end{align*}
 By shifting the index in the last sum, we arrive at the equation from the statement.
\end{proof}

Further, we investigate spectral properties of $J_{\nu}$ in more detail. In the analysis, the crucial role is played by the unit argument hypergeometric function
\begin{equation}
 \pFq{3}{2}{a_{1},a_{2},a_{3}}{b_{1},b_{2}}{1}:=\sum_{n=0}^{\infty}\frac{(a_{1})_{n}(a_{2})_{n}(a_{3})_{n}}{n!\,(b_{1})_{n}(b_{2})_{n}},
\label{eq:def_3F2}
\end{equation}
where $(a)_{n}=a(a+1)\dots(a+n-1)$ is the Pochhammer symbol. Function~\eqref{eq:def_3F2} is well defined, if $\Re(b_{1}+b_{2}-a_{1}-a_{2}-a_{3})>0$ and $b_{1},b_{2}\notin-\N_{0}$. The latter condition can be removed by introducing the regularized hypergeometric function 
\[
 \regpFq{3}{2}{a_{1},a_{2},a_{3}}{b_{1},b_{2}}{1}:=\frac{1}{\Gamma(b_{1})\Gamma(b_{2})}\,\pFq{3}{2}{a_{1},a_{2},a_{3}}{b_{1},b_{2}}{1},
\]
which is analytic in each parameter $a_{1},a_{2},a_{3},b_{1},b_{2}$, if $\Re(b_{1}+b_{2}-a_{1}-a_{2}-a_{3})>0$.

At first, we show that, in the special case $\nu=1$, Jacobi operator $J_{1}$ corresponds to a~subfamily of the Continuous dual Hahn polynomials whose properties are well known. As a result, the spectral analysis of~$J_{1}$ readily follows. In the general case, however, one cannot make a direct use of existing results although orthogonal polynomials determined by $J_{\nu}$ can be found as a particular case of the five-parameter family studied by Ismail, Letessier, and Valent in~\cite{ism-let-val_siamjma89} within the context of birth and death processes with quadratic rates. However, in our particular setting, the parameters are out of range of the analysis made in~\cite{ism-let-val_siamjma89}.

\begin{rem}
Let us explain the last comment in more detail. If we use the notation of~\cite{ism-let-val_siamjma89} and set
\begin{equation}
a:=\nu, \quad b:=\nu+1, \quad \alpha:=\nu-1, \quad \beta:=\nu, \quad\mbox{ and }\quad \eta:=0,
\label{eq:parameters_ismail-etal}
\end{equation}
then polynomials $P_{n}(x)=P_{n}(x;a,b,\alpha,\beta,\eta)$ studied in~\cite{ism-let-val_siamjma89} coincide with orthogonal polynomials determined by $J_{\nu}$ up to a multiplicative constant. However, the approach to derive an asymptotic formula for $P_{n}(x)$, as $n\to\infty$, used in~\cite{ism-let-val_siamjma89} does not apply in the particular case of parameters~\eqref{eq:parameters_ismail-etal}, which is not immediately obvious from the paper. The asymptotic analysis relies on an application of the Darboux method to a convenient generating function of the sequence $P_{n}(x)$, which is, in its turn, found as a solution of a hypergeometric differential equation, see~\cite[Eqs.~(2.1) and~(2.2)]{ism-let-val_siamjma89}. In the exceptional case of parameters~\eqref{eq:parameters_ismail-etal}, however, functions $G_{1}^{\alpha,\beta}$ and $G_{2}^{\alpha,\beta}$ defined in~\cite[p.~729]{ism-let-val_siamjma89} do not form a fundamental system of the respective ODE, see~\cite[\S~15.10]{dlmf} for more details. As a result, the generating function is not given by~\cite[Eq.~(2.6)]{ism-let-val_siamjma89}. From this point of view, the spectral analysis of $J_{\nu}$ is of independent interest since it complements the picture on the corresponding family of orthogonal polynomials.
\end{rem}

\subsection{A warm up: Special case $\nu=1$ and corollaries}\label{subsec:nu=1}

Recall that the Continuous dual Hahn polynomials $S_{n}(x;a,b,c)$ is a three-parameter family of hypergeometric orthogonal polynomials listed in the Askey scheme~\cite[Sec.~9.3]{koe-les-swa_10}. They are defined by the formula~\cite[Eq.~9.3.1]{koe-les-swa_10}
\[
S_{n}(x^{2};a,b,c):=(a+b)_{n}(a+c)_{n}\,\pFq{3}{2}{-n,a+\ii x,a-\ii x}{a+b,a+c}{1}.
\]
In the particular case $a=b=1/2$, $c=3/2$, the sequence of polynomials 
\[
P_{n}(x):=\frac{1}{n!(n+1)!}\,S_{n}\!\left(x-\frac{1}{4};\frac{1}{2},\frac{1}{2},\frac{3}{2}\right), \quad n\in \N_{0},
\]
fulfills the recurrence
\begin{align*}
 (b_{0}(1)-x)P_{0}(x)-b_{0}(1)P_{1}(x)&=0,\\
-b_{n-1}(1)P_{n-1}(x)+(b_{n-1}(1)+b_{n}(1)-x)P_{n}(x)-b_{n}(1)P_{n+1}(x)&=0, \quad n\in\N,
\end{align*}
which follows from the general three-term recurrence for Continuous dual Hahn polynomials~\cite[Eq.~(9.3.4)]{koe-les-swa_10}. In other words, the column vector $P(x)=(P_{0}(x),P_{1}(x),\dots)^{T}$ fulfills the generalized eigenvalue equation $J_{1}P(x)=xP(x)$. Moreover, we have the orthogonality relation
\begin{equation}
2\pi\int_{0}^{\infty}P_{m}\!\left(\frac{1}{4}+x^{2}\right)P_{n}\!\left(\frac{1}{4}+x^{2}\right)\frac{x\sinh(\pi x)}{\cosh^{2}(\pi x)}\!\left(\frac{1}{4}+x^{2}\right)\!\dd x=\delta_{m,n}, \quad m,n\in\N_{0},
\label{eq:orthogonality_nu=1}
\end{equation}
which can be deduced from the general orthogonality relation~\cite[Eq.~(9.3.2)]{koe-les-swa_10} and well-known special values of the Gamma function, namely~\cite[Eqs.~5.4.3, 5.4.4]{dlmf}.

This means that $J_{1}$ is unitarily equivalent to the operator of multiplication by the independent variable on $\mathscr{H}:=L^{2}([1/4,\infty),\rho(x)\dd x)$, where 
\[
\rho(x)=\pi x\frac{\sinh(\pi\sqrt{x-1/4})}{\cosh^{2}(\pi\sqrt{x-1/4})},
\]
via the unitary mapping $U:\ell^{2}(\N_{0})\to \mathscr{H}$ unambiguously determined by correspondence $U:e_{n}\mapsto P_{n}$, $\forall n\in\N_{0}$. Consequently, $\sigma(J_{1})=\sigma_{\text{ac}}(J_{1})=[1/4,\infty)$ and the spectrum is simple (which is always the case for self-adjoint Jacobi operators). Since $L_{1}=J_{1}^{-1}$ we immediately obtain the following statement.

\begin{prop}\label{prop:L_1_spectrum}
The spectrum of $L_{1}$ is simple, purely absolutely continuous, and equal to the interval $[0,4]$.
\end{prop}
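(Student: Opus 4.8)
The plan is to deduce every assertion about $L_1$ from the explicit spectral representation of $J_1$ already established, since $L_1=J_1^{-1}$ and the map $x\mapsto 1/x$ is a well-behaved bijection on $\sigma(J_1)$. First I would record that $\sigma(J_1)=[1/4,\infty)$ forces $J_1\geq 1/4>0$; in particular $0\notin\sigma(J_1)$, so the bounded self-adjoint inverse $L_1=J_1^{-1}$ exists and coincides with $g(J_1)$ for the continuous bounded function $g(x)=1/x$ on $\sigma(J_1)$ via the functional calculus.

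Next I would transport the unitary equivalence. Since $UJ_1U^{-1}$ is multiplication by the independent variable on $\mathscr{H}=L^2([1/4,\infty),\rho\,dx)$, it follows that $UL_1U^{-1}=UJ_1^{-1}U^{-1}$ is multiplication by $1/x$ on the same space. Performing the change of variable $y=1/x$ turns this into multiplication by $y$ on $L^2((0,4],\tilde\rho(y)\,dy)$, where the transported weight $\tilde\rho(y)=y^{-2}\rho(1/y)$ is continuous and strictly positive on $(0,4]$. The spectrum of this multiplication operator is the essential range of the identity function, i.e.\ the closure of the support of $\tilde\rho\,dy$; since $\tilde\rho>0$ throughout $(0,4]$ and $0$ is a limit point of that interval, this closure is exactly $[0,4]$. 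Because the spectral measure of any vector $f$ equals $|f(y)|^2\tilde\rho(y)\,dy$, which is absolutely continuous with respect to Lebesgue measure, the spectrum is purely absolutely continuous; and multiplication by the variable on $L^2$ of such a scalar one-dimensional measure is multiplicity-free (the constant function is cyclic), so the spectrum is simple. Equivalently, simplicity is inherited from $J_1$ because $g$ is injective on $\sigma(J_1)$.

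Equivalently, one could invoke the spectral mapping theorem directly: $\sigma(L_1)=\sigma(g(J_1))=\overline{g(\sigma(J_1))}=\overline{(0,4]}=[0,4]$, while preservation of the absolutely continuous part and of the multiplicity under the injective $C^1$-diffeomorphism $g$, whose derivative $g'(x)=-x^{-2}$ never vanishes on $[1/4,\infty)$, yields pure absolute continuity and simplicity.

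The one point requiring a moment's care—and the only genuine subtlety—is the appearance of $0\in\sigma(L_1)$: it is not attained by $1/x$ for finite $x$ and is not an eigenvalue, but it lies in the spectrum as a limit point of the continuous spectrum, reflecting the unboundedness of $J_1$ (indeed $\Ran L_1=\Dom J_1\neq\ell^2(\N_0)$, so $L_1$ is not boundedly invertible). All remaining steps are routine consequences of the functional calculus and the change-of-variables formula.
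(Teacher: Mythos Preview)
Your argument is correct and follows exactly the route the paper takes: the paper's entire proof is the one sentence ``Since $L_{1}=J_{1}^{-1}$ we immediately obtain the following statement,'' and you have simply supplied the details of that immediacy via the functional calculus and spectral mapping for $g(x)=1/x$. Your careful treatment of why $0$ belongs to $\sigma(L_1)$ (as a limit point of the continuous spectrum rather than as an eigenvalue) is a nice elaboration of a point the paper leaves implicit.
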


Last observation gives us also an information about the absolutely continuous spectrum of $J_{\nu}$, for arbitrary $\nu\in\R\setminus(-\N_{0})$. We will also arrive at the same result by direct means below.

\begin{prop}
For all $\nu\in\R\setminus(-\N_{0})$, the absolutely continuous spectrum of $L_{\nu}$ is simple and fills the interval $[0,4]$.
\end{prop}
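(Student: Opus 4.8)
The plan is to obtain the claim from the already established case $\nu=1$ (Proposition~\ref{prop:L_1_spectrum}) by exhibiting $L_{\nu}$ as a trace-class perturbation of $L_{1}$ and invoking the Kato--Rosenblum theorem, which asserts that the absolutely continuous parts of two self-adjoint operators differing by a trace-class operator are unitarily equivalent; see~\cite{reed-simon_IV}. Since such a unitary equivalence preserves the spectral multiplicity function, this will deliver simultaneously the identity $\sigma_{\ac}(L_{\nu})=\sigma_{\ac}(L_{1})=[0,4]$ and the simplicity of $\sigma_{\ac}(L_{\nu})$.

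The core of the argument is the observation that the difference $L_{\nu}-L_{1}$ is itself an $L$-operator whose parameter sequence decays like $n^{-2}$. Indeed, since $a_{n}(\nu)=1/(n+\nu)=O(1/n)$ lies in $\ell^{2}(\N_{0})$ for every admissible $\nu$, the matrix $\mathcal{L}_{\nu}$ is the genuine matrix representation of $L_{\nu}$, and subtracting the two representations gives the $L$-matrix with parameter sequence
\[
 c_{n}:=a_{n}(\nu)-a_{n}(1)=\frac{1-\nu}{(n+\nu)(n+1)}, \qquad n\in\N_{0},
\]
so that $|c_{n}|=O(n^{-2})$. Consequently $\sum_{k\geq1}\sqrt{k}\,|c_{k}|\leq C\sum_{k\geq1}k^{-3/2}<\infty$, and Theorem~\ref{thm:l-matrix_trace_class} shows that this $L$-matrix represents a trace-class operator $M$ on $\ell^{2}(\N_{0})$. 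Because $L_{1}$ is bounded self-adjoint with $\|L_{1}\|=4$ and $M$ is a real symmetric trace-class operator, the operator $L_{1}+M$ is bounded and self-adjoint; I would then identify it with $L_{\nu}=J_{\nu}^{-1}$ by checking that both act as multiplication by $\mathcal{L}_{\nu}$ on the dense set $\spn\{e_{n}\mid n\in\N_{0}\}$ and using that $L_{1}+M$ is bounded, hence the unique bounded operator with this matrix representation, which in passing also establishes that $L_{\nu}$ is bounded, i.e.\ $0\notin\sigma(J_{\nu})$.

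With $L_{\nu}=L_{1}+M$ and $M\in\mathscr{J}_{1}$ in hand, the Kato--Rosenblum theorem applies directly and yields the assertion. The main obstacle I anticipate is precisely the bookkeeping in the identification $L_{\nu}=L_{1}+M$: the $L$-operator was defined abstractly as $J_{\nu}^{-1}$ in Definition~\ref{def:L-operator}, so one must justify that the formal matrix difference corresponds to an honest operator difference and that $L_{\nu}$ is the full, not merely a restricted, inverse. This is handled by the square-summability $a(\nu)\in\ell^{2}(\N_{0})$, which guarantees $e_{n}\in\Dom L_{\nu}$ and $L_{\nu}e_{n}=\mathcal{L}_{\nu}e_{n}$, so that both $L_{\nu}$ and $L_{1}+M$ extend the minimal operator $L_{\nu,\min}$; since $L_{1}+M$ is bounded and $L_{\nu}$ is closed and agrees with it on a dense domain, the two coincide. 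Everything else reduces to the elementary decay estimate on $c_{n}$ and the cited perturbation theorem.
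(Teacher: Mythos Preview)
Your proposal is correct and follows essentially the same route as the paper: both exhibit $L_{\nu}-L_{1}$ as an $L$-operator with parameter sequence $(1-\nu)/((n+\nu)(n+1))=O(n^{-2})$, invoke Theorem~\ref{thm:l-matrix_trace_class} to place the difference in the trace class, and then apply the Kato--Rosenblum theorem together with Proposition~\ref{prop:L_1_spectrum}. Your additional care in identifying the abstractly defined $L_{\nu}=J_{\nu}^{-1}$ with the bounded operator $L_{1}+M$ is a point the paper passes over silently, and your argument for it is sound.
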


\begin{proof}
 Let $\nu\in\R\setminus(-\N_{0})$ be fixed. 
 Since $L_{1}-L_{\nu}$ is an $L$-operator with the parameter sequence
 \[
  a_{n}(1)-a_{n}(\nu)=\frac{\nu-1}{(n+1)(n+\nu)}, \quad n\in\N_{0},
 \]
 it is a trace class operator by Theorem~\ref{thm:l-matrix_trace_class}. The statement now follows from Proposition~\ref{prop:L_1_spectrum} and the Kato--Rosenblum Theorem, which implies that the absolutely continuous parts of $L_{1}$ and $L_{\nu}$ are unitarily equivalent, see~\cite[Thm.~4.4, p.~540]{kat_66}.
\end{proof}

\subsection{Solutions of the eigenvalue equation $J_{\nu}$}
\label{subsec:eigvectors_of_J_nu}

At the start, we define a sequence of functions that will be of importance in the spectral analysis of $J_{\nu}$ for general $\nu\in\R\setminus(-\N_{0})$. Its definition for the full range of the involved parameters requires a certain cautiousness. 

First, for $n\in\N_{0}$, $n+\nu>0$, and $z\in\C$,  we define
\begin{equation}
\phi_{n}(z;\nu):=\Gamma(n+\nu+1)\,\regpFq{3}{2}{1/2+z, 1/2+z, 3/2+z}{1+2z,n+\nu+3/2+z}{1}.
\label{eq:def_phi}
\end{equation}
Clearly, the condition $n+\nu>0$ is fulfilled for $n$ sufficiently large and, if $\nu>0$, for all $n\in\N_{0}$. However, we wish to give meaning to the right hand side of~\eqref{eq:def_phi} also if $n+\nu<0$. One way to do so, is to define $\phi_{n}$ recursively for $n<-\nu$ by using the following recurrence rule.

\begin{lem}\label{lem:recur_phi}
 For all $n\in\N$, $\nu>0$, and $z\in\C$, one has
 \[
  -(n-1+\nu)(n+\nu)\phi_{n-1}(z;\nu)+\left[2(n+\nu)^{2}-\frac{1}{4}+z^{2}\right]\phi_{n}(z;\nu)-(n+\nu)(n+1+\nu)\phi_{n+1}(z;\nu)=0.
 \]
\end{lem}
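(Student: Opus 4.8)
The plan is to prove the three-term recurrence by recognizing that $\phi_n(z;\nu)$, as a function of the parameter $n+\nu+3/2+z$ appearing in the bottom row of the ${}_3F_2$, satisfies a contiguous-relation identity. I would set $c := n+\nu+3/2+z$ and write $\phi_n$, $\phi_{n\pm1}$ in terms of the regularized ${}_3\tilde{F}_2$ with bottom parameters $1+2z$ and $c$, $c\pm1$ respectively. The recurrence then becomes a linear relation among three hypergeometric functions whose only differing parameter is the bottom entry shifted by $\pm1$; such a relation is precisely a contiguous relation for ${}_3F_2(1)$. The cleanest route is to verify the identity \emph{termwise}: expand each $\phi$ as a series in the summation index $k$ using~\eqref{eq:def_3F2}, and show that the coefficient of each power behaves correctly after clearing the $\Gamma$-factors hidden in the regularization.

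More concretely, I would first absorb the $\Gamma$-normalizations. Using $\phi_n(z;\nu)=\Gamma(n+\nu+1)\,{}_3\tilde{F}_2(\dots)$ and the definition of the regularized function, each $\phi_n$ is a genuine convergent series (the balance condition $\Re(b_1+b_2-a_1-a_2-a_3)=\Re(1+2z+c-3/2-3z)>0$ must be checked for $n+\nu>0$, which holds since $\Re c$ is large). The key algebraic fact is that the ratio of consecutive bottom Pochhammer symbols satisfies
\[
\frac{(c-1)_k}{(c)_k}=\frac{c-1}{c-1+k},\qquad \frac{(c+1)_k}{(c)_k}=\frac{c+k}{c},
\]
so after writing everything over the common series with general term proportional to $(1/2+z)_k^2(3/2+z)_k/\big(k!\,(1+2z)_k(c)_k\big)$, the three terms combine into a single series whose $k$-th coefficient is a \emph{rational function of $k$}. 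The recurrence holds if and only if that rational function vanishes identically, which reduces to a polynomial identity in $k$ (and in $n,\nu,z$) that I would verify by direct expansion. I expect the bracket $2(n+\nu)^2-1/4+z^2$ to emerge naturally: it is exactly the combination that makes the leading and subleading powers of $k$ cancel.

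The main obstacle is bookkeeping of the $\Gamma$-prefactors and the parameter shifts: the three functions $\phi_{n-1},\phi_n,\phi_{n+1}$ carry prefactors $\Gamma(n+\nu)$, $\Gamma(n+\nu+1)$, $\Gamma(n+\nu+2)$ and bottom parameters $c-1,c,c+1$, so before comparing coefficients I must rescale each series to a common normalization, which introduces factors like $(c-1)(n+\nu-1)$ and $(c)(n+\nu)$ that interact with the coefficients $-(n-1+\nu)(n+\nu)$ and $-(n+\nu)(n+1+\nu)$ in the statement. Keeping these straight is the delicate part; once the common-denominator series is assembled, the vanishing of the coefficient is a routine (if tedious) algebraic verification. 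As an alternative that sidesteps raw term-by-term computation, I would note that the recurrence is \emph{equivalent} to the statement that $\phi_n(z;\nu)$ is a generalized eigenvector of $J_\nu$ at spectral value $x=1/4-z^2$ (comparing with the entries $b_n=(n+\nu)(n+\nu+1)$ and the diagonal $b_{n-1}+b_n=2(n+\nu)^2$), so one could instead derive it from a known three-term contiguous relation for Continuous dual Hahn / ${}_3F_2$ polynomials specialized to these parameters; this is conceptually cleaner but requires matching to a reference relation. Finally, I would remark that, having proved the identity for $n+\nu>0$, the recurrence furnishes the analytic continuation of $\phi_n$ to $n<-\nu$ alluded to before the lemma, so the restriction $\nu>0$ in the statement is merely to guarantee all indices satisfy $n+\nu>0$ and the series converge.
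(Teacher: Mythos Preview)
Your proposal is correct and follows essentially the same route as the paper: both derive the recurrence from a contiguous relation for ${}_{3}F_{2}$ in which only the bottom parameter $b_{1}=n+\nu+1/2+z$ is shifted, verified by termwise coefficient comparison. The only organizational difference is that the paper first states the relation for generic argument $w$ (a four-term identity in $F_{-1},F_{0},F_{1},F_{2}$) and then sets $w=1$, which kills the $F_{-1}$ term and yields the desired three-term relation; you propose to work directly at $w=1$, which amounts to the same algebra.
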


\begin{proof}
We deduce the recurrence formula from a general identity for the ${}_{3}F_{2}$-functions. If we temporarily denote 
\[
 F_{i}:=\frac{1}{\Gamma(b_{1}+i)}\,\pFq{3}{2}{a_{1},a_{2},a_{3}}{b_{1}+i,b_{2}}{w},
\]
then the identity
\begin{equation}
(w-1)F_{-1}=(A_{1}+wB_{1})F_{0}+(A_{2}+wB_{2})F_{1}+wB_{3}F_{2},
\label{eq:recur_G_general}
\end{equation}
holds true for 
\begin{align*}
A_{1}&=b_{2}-2b_{1},\\
B_{1}&=3b_{1}-a_{1}-a_{2}-a_{3},\\
A_{2}&=b_{1}(b_{1}-b_{2}+1),\\
B_{2}&=(2b_{1}+1)(a_{1}+a_{2}+a_{3})-3b_{1}^{2}-3b_{1}-1-a_{1}a_{2}-a_{1}a_{3}-a_{2}a_{3},\\
B_{3}&=(b_{1}-a_{1}+1)(b_{1}-a_{2}+1)(b_{1}-a_{3}+1).
\end{align*}
This is one of several contiguous relations for ${}_{3}F_{2}$-functions. Recall that any four distinct contiguous functions ${}_{3}F_{2}$, i.e., ${}_{3}F_{2}$-functions whose parameters differ by integers, are linearly related. We do not have an exact reference for the particular identity~\eqref{eq:recur_G_general} but it can be verified straightforwardly by comparing coefficients of the same powers of $w$ on both sides of~\eqref{eq:recur_G_general}; see also~\cite[\S~48]{rai_71}.

To obtain the formula from the statement, it suffices to set $w=1$ and
\[
 a_{1}=a_{2}=\frac{1}{2}+z, \quad a_{3}=\frac{3}{2}+z,\quad b_{1}=n+\nu+\frac{1}{2}+z,\quad b_{2}=1+2z
\]
in~\eqref{eq:recur_G_general} and do simple manipulations.
\end{proof}

With the aid of Lemma~\ref{lem:recur_phi}, we can extend the definition of $\phi_{n}(z;\nu)$ recursively for $n<-\nu$ by equation
\[
  \phi_{n}(z;\nu)=\frac{2(n+1+\nu)^{2}-1/4+z^{2}}{(n+\nu)(n+1+\nu)}\phi_{n+1}(z;\nu)-\frac{n+2+\nu}{n+\nu}\phi_{n+2}(z;\nu).
\]
This way, $\phi_{n}(z;\nu)$ is defined for all $n\in\Z$ and analytic in $z\in\C$ and $\nu\in\C\setminus\{-n,-n-1,\dots\}$.

It is useful to have a relatively simple hypergeometric formula like~\eqref{eq:def_phi} for $\phi_{n}(z;\nu)$ rather than the recursive definition. This is possible, however, the range of parameter $z$ is to be restricted to a half-plane. We make use of the Thomae transformation~\cite[Eq.~7.4.4.2]{prudnikov-etal_vol3}
\begin{equation}
 \regpFq{3}{2}{a,b,c}{d,e}{1}=\frac{\Gamma(d+e-a-b-c)}{\Gamma(a)}\,\regpFq{3}{2}{d-a,e-a,d+e-a-b-c}{d+e-a-b,d+e-a-c}{1},
\label{eq:prudnikov_transf2}
\end{equation}
which holds true if $\Re(d+e-a-b-c)>0$ and $\Re a>0$. By putting
\[
 a=\frac{3}{2}+z, \quad b=c=\frac{1}{2}+z,\quad d=1+2z,\quad e=n+\nu+\frac{3}{2}+z,
\]
we obtain from~\eqref{eq:def_phi} the equality
\begin{equation}
 \phi_{n}(z;\nu)=\frac{\Gamma(n+\nu)\Gamma(n+\nu+1)}{\Gamma(z+3/2)}\,\regpFq{3}{2}{z-1/2,n+\nu,n+\nu}{n+\nu+z+1/2,n+\nu+z+1/2}{1},
\label{eq:def_phi_extended}
\end{equation}
provided that $n+\nu>0$ and $\Re z>-3/2$. However, the condition $n+\nu>0$ is only necessary for the hypergeometric function ${}_{3}\tilde{F}_{2}$ present in definition~\eqref{eq:def_phi} to be well defined. Note that, for $n\in\Z$ and $z\in\C$, $\Re z>-3/2$ fixed, the right-hand side of~\eqref{eq:def_phi_extended} is an analytic function in $\nu\in\C\setminus\{-n,-n-1,\dots\}$. Therefore the right-hand side of~\eqref{eq:def_phi_extended} has to coincide with the recursively extended function $\phi_{n}(z;\nu)$ for all $n\in\Z$, $\nu\in\C\setminus\{-n,-n-1,\dots\}$, and $z\in\C$ such that $\Re z>-3/2$.

One could observe already from Lemma~\ref{lem:recur_phi} that functions $\phi_{n}(z;\nu)$ are almost generalized eigenvectors of the Jacobi operator $J_{\nu}$, therefore they play a crucial role in the spectral analysis of $J_{\nu}$. Their selected properties are summarized in the following proposition.

\begin{prop}\label{prop:phi_fund_proper}
 Let $\nu\in\R\setminus(-\N_{0})$ and $z\in\C$. 
 \begin{enumerate}[1)]
 \item The semi-infinite column vector $\phi(z;\nu)=(\phi_{0}(z;\nu),\phi_{1}(z;\nu),\dots)^{T}$ fulfills the equation
 \[
  J_{\nu}\phi(z;\nu)=\left(\frac{1}{4}-z^{2}\right)\phi(z;\nu)+\chi(z;\nu)e_{0},
 \]
 where $\chi(z;\nu):=\nu(\nu-1)\left[\phi_{-1}(z;\nu)-\phi_{0}(z;\nu)\right]$. In addition, we have the hypergeometric representation 
 \begin{align}
  \chi(z;\nu)=\frac{\Gamma(\nu)\Gamma(\nu+1)}{\Gamma(z+3/2)}\bigg[&\regpFq{3}{2}{z-1/2,\nu-1,\nu-1}{\nu+z-1/2,\nu+z-1/2}{1}\nonumber\\
  &\hskip18pt-\nu(\nu-1)\,\regpFq{3}{2}{z-1/2,\nu,\nu}{\nu+z+1/2,\nu+z+1/2}{1}\bigg],
 \label{eq:chi_hyper_form}
 \end{align}
 provided that $\Re z>-3/2$.
 \item For $n\to\infty$, the asymptotic expansion
 \[
  \phi_{n}(z;\nu)=\frac{n^{-z-1/2}}{\Gamma(1+2z)}\left[1+O\left(\frac{1}{n}\right)\right],
 \]
 holds, provided that $-2z\notin\N$.
 \item The Wronskian  
 \[
 W(\phi(z;\nu),\phi(-z;\nu)):=-b_{n}(\nu)\left[\phi_{n+1}(z;\nu)\phi_{n}(-z;\nu)-\phi_{n+1}(-z;\nu)\phi_{n}(z;\nu)\right]
 \] 
 of vectors $\phi(\pm z;\nu)$ reads
 \[
  W(\phi(z;\nu),\phi(-z;\nu))=\frac{\sin(2\pi z)}{\pi}.
 \]
 Consequently, $\phi(z;\nu)$ and $\phi(-z;\nu)$ are linearly independent if and only if $2z\notin\Z$.
\end{enumerate}  
\end{prop}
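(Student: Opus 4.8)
The plan is to establish the three claims essentially independently, leaning on the recurrence of Lemma~\ref{lem:recur_phi}, the hypergeometric representation~\eqref{eq:def_phi_extended}, and a standard constancy argument for the Wronskian.

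For claim~(1), I would start from the recurrence in Lemma~\ref{lem:recur_phi}, which I read as
\[
-b_{n-1}(\nu)\phi_{n-1}+\left[2(n+\nu)^{2}-\tfrac14+z^{2}\right]\phi_{n}-b_{n}(\nu)\phi_{n+1}=0,\quad n\in\N,
\]
since $b_{n}(\nu)=(n+\nu)(n+\nu+1)$. Comparing the bracketed coefficient with the diagonal entry $b_{n-1}(\nu)+b_{n}(\nu)=2(n+\nu)^{2}$ of $\mathcal{J}$, the recurrence rearranges exactly into the $n$th row of $(J_{\nu}\phi)_{n}=\left(\tfrac14-z^{2}\right)\phi_{n}$ for every $n\in\N$. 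Thus the eigenvalue equation holds in all components except the zeroth, and the discrepancy in the top row is a multiple of $e_{0}$; a direct computation of the zeroth row $b_{0}(\nu)\phi_{0}-b_{0}(\nu)\phi_{1}$ versus $\left(\tfrac14-z^{2}\right)\phi_{0}$ identifies the coefficient as $\nu(\nu-1)[\phi_{-1}-\phi_{0}]=\chi(z;\nu)$, where $\phi_{-1}$ is the recursively extended value. The explicit formula~\eqref{eq:chi_hyper_form} then follows by substituting the hypergeometric representation~\eqref{eq:def_phi_extended} for $\phi_{0}$ and $\phi_{-1}$ (valid for $\Re z>-3/2$), using $b_{0}(\nu)=\nu(\nu+1)$ and simplifying the Gamma prefactors.

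For claim~(2), I would extract the large-$n$ behaviour directly from~\eqref{eq:def_phi}. As $n\to\infty$ the lower parameter $n+\nu+3/2+z\to\infty$, so every term of the ${}_{3}F_{2}$-series beyond the constant term is suppressed; the regularized function tends to $1/[\Gamma(1+2z)\Gamma(n+\nu+3/2+z)]$, and multiplying by the prefactor $\Gamma(n+\nu+1)$ and applying the ratio asymptotics $\Gamma(n+\nu+1)/\Gamma(n+\nu+3/2+z)\sim n^{-1/2-z}$ gives the stated leading term with relative error $O(1/n)$; controlling the tail of the series uniformly is what yields the $O(1/n)$ bound, and the exclusion $-2z\notin\N$ guarantees $\Gamma(1+2z)$ is finite and the expansion is nondegenerate.

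For claim~(3), the combination $\phi(z;\nu)$ and $\phi(-z;\nu)$ both solve the same second-order recurrence (the $n\in\N$ part of Lemma~\ref{lem:recur_phi} is symmetric under $z\mapsto-z$, since only $z^{2}$ appears), so their Wronskian $W(\phi(z),\phi(-z))=-b_{n}(\nu)[\phi_{n+1}(z)\phi_{n}(-z)-\phi_{n+1}(-z)\phi_{n}(z)]$ is independent of $n$. I would therefore evaluate it in the limit $n\to\infty$ using the asymptotics of claim~(2): inserting $\phi_{n}(\pm z)\sim n^{\mp z-1/2}/\Gamma(1\pm2z)$ and $b_{n}(\nu)\sim n^{2}$, the leading powers combine to a finite constant, and the reflection formula $\Gamma(1+2z)\Gamma(1-2z)=2\pi z/\sin(2\pi z)$ produces $\sin(2\pi z)/\pi$. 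The linear independence statement is then immediate, since the Wronskian vanishes precisely when $\sin(2\pi z)=0$, i.e.\ when $2z\in\Z$. The main obstacle I anticipate is claim~(3): the two leading-order terms in the Wronskian difference carry the \emph{same} power of $n$ and must be subtracted, so the naive leading asymptotics of claim~(2) cancel at that order. Extracting the surviving constant therefore requires either the next-order term in the expansion of~(2) or, more cleanly, evaluating the $n$-independent Wronskian at a convenient finite index using the closed form~\eqref{eq:def_phi_extended} together with a Gauss-type summation; I would pursue the latter route to avoid tracking subleading asymptotic corrections.
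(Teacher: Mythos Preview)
Your treatment of claims~(1) and~(2) is essentially identical to the paper's: claim~(1) follows directly from Lemma~\ref{lem:recur_phi} together with~\eqref{eq:def_phi_extended}, and claim~(2) is obtained from~\eqref{eq:def_phi} via ${}_{3}F_{2}\to1$ and Stirling. For claim~(3) your first paragraph is also the paper's route: constancy of the Wronskian plus the asymptotics from claim~(2).

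The concern you raise at the end, however, is misdirected. The leading asymptotics do \emph{not} cancel if you retain the shift $n\mapsto n+1$: the paper substitutes $\phi_{n}(z)\sim n^{-z-1/2}/\Gamma(1+2z)$ and $\phi_{n+1}(-z)\sim(n+1)^{z-1/2}/\Gamma(1-2z)$ and computes
\[
 (n+\nu)(n+\nu+1)\,\frac{n^{-z-1/2}(n+1)^{z-1/2}-n^{z-1/2}(n+1)^{-z-1/2}}{\Gamma(1+2z)\Gamma(1-2z)}\;\longrightarrow\;\frac{2z}{\Gamma(1+2z)\Gamma(1-2z)}=\frac{\sin(2\pi z)}{\pi},
\]
since $n^{-z-1/2}(n+1)^{z-1/2}-n^{z-1/2}(n+1)^{-z-1/2}=n^{-1}[(1+1/n)^{z-1/2}-(1+1/n)^{-z-1/2}]\sim 2z/n^{2}$. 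So no finite-index evaluation or Gauss-type summation is needed.

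What you \emph{could} legitimately worry about is whether the $O(1/n)$ relative errors from claim~(2) survive after multiplying by $b_{n}(\nu)\sim n^{2}$; the paper glosses over this. Writing $\phi_{n}(\pm z)=a_{n}^{\pm}(1+\alpha_{n}^{\pm})$ with $\alpha_{n}^{\pm}=c^{\pm}/n+O(1/n^{2})$ (which is what~\eqref{eq:def_phi} actually gives), the cross-error terms in the Wronskian difference regroup as $a_{n+1}^{+}a_{n}^{-}(\alpha_{n+1}^{+}-\alpha_{n}^{+})+\dots$, and since $\alpha_{n+1}^{\pm}-\alpha_{n}^{\pm}=O(1/n^{2})$ these contribute $O(1/n^{3})$ to the difference, hence $O(1/n)$ to $W$. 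That closes the gap; your proposed detour via a finite index would be considerably harder and is unnecessary.
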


\begin{rem}
More expressions for $\chi(z;\nu)$, even simpler than~\eqref{eq:chi_hyper_form}, are given in Proposition~\ref{prop:chi_expressions} below.
\end{rem}

\begin{proof}[Proof of Proposition~\ref{prop:phi_fund_proper}]
 1) The first claim follows readily from Lemma~\ref{lem:recur_phi} and formula~\eqref{eq:chi_hyper_form} is obtained from~\eqref{eq:def_phi_extended}.
 
 2) For $\nu\in\R\setminus\N_{0}$, $z\in\C\setminus\left(-\N/2\right)$ fixed and $n$ large, we have formula~\eqref{eq:def_phi}, which is in a suitable form for the asymptotic analysis. First, from the very definition of the hypergeometric series~\eqref{eq:def_3F2}, we have
 \[
 \pFq{3}{2}{1/2+z, 1/2+z, 3/2+z}{1+2z,n+\nu+3/2+z}{1}=1+O\left(\frac{1}{n}\right), \quad n\to\infty.
 \]
 Second, the Stirling formula yields
 \[
  \frac{\Gamma(n+\nu+1)}{\Gamma\left(n+\nu+3/2+z\right)}=n^{-z-1/2}\left[1+O\left(\frac{1}{n}\right)\right], \quad n\to\infty.
 \]
 Applying the two asymptotic formulas in~\eqref{eq:def_phi}, we arrive at the statement.
 
 3) Since both vectors $\phi(\pm z;\nu)$ solve the second order difference equation from Lemma~\ref{lem:recur_phi} their Wronskian $W(\phi(z;\nu),\phi(-z;\nu))$ is an $n$-independent constant, which vanishes if and only if $\phi(z;\nu)$ and $\phi(-z;\nu)$ are linearly dependent. Therefore we can compute the Wronskian with the aid of the already proven claim 2). We have
 \begin{align*}
  W(\phi(z;\nu),\phi(-z;\nu))&=\lim_{n\to\infty}(n+\nu)(n+\nu+1)\,\frac{n^{-z-1/2}(n+1)^{z-1/2}-n^{z-1/2}(n+1)^{-z-1/2}}{\Gamma(1+2z)\Gamma(1-2z)}\\
  &=\frac{2z}{\Gamma(1+2z)\Gamma(1-2z)}=\frac{1}{\Gamma(2z)\Gamma(1-2z)}=\frac{\sin(2\pi z)}{\pi},
 \end{align*}
 for $2z\notin\Z$, where we have used the multiplication formula~\cite[Eq.~5.5.3]{dlmf} 
 \[ 
 \Gamma(z)\Gamma(1-z)=\frac{\pi}{\sin(\pi z)}.
 \] 
  By definition, the function $z\mapsto W(\phi(z;\nu),\phi(-z;\nu))$ is entire and hence the obtained formula extends to all $z\in\C$.
\end{proof}

Function $\chi(z;\nu)$ defined in claim 1) of Proposition~\ref{prop:phi_fund_proper} will play a role of the characteristic function of $J_{\nu}$. Three more hypergeometric expressions for $\chi(z;\nu)$, which will be needed below, are given in the next statement.

\begin{prop}\label{prop:chi_expressions}
The following hypergeometric representations hold:
\begin{enumerate}[1)]
\item For $\nu>0$ and $z\in\C$, one has
 \begin{equation}
  \chi(z;\nu)=\left(z+\frac{1}{2}\right)\Gamma(\nu+1)\,\regpFq{3}{2}{z-1/2,z+1/2,z+3/2}{2z+1,z+\nu+1/2}{1}.\label{eq:chi_simple_hyp_form1}
 \end{equation}
\item For $\nu\in\R\setminus(-\N_{0})$ and $\Re z>-1/2$, one has
  \begin{equation}
   \chi(z;\nu)=\frac{(z+1/2)\Gamma(\nu)\Gamma(\nu+1)}{\Gamma(z+1/2)}\,\regpFq{3}{2}{\nu-1,\nu+1,z+1/2}{z+\nu+1/2,z+\nu+1/2}{1}.
   \label{eq:chi_simple_hyp_form2}
   \end{equation}
\item For $\nu\in\R\setminus(-\N_{0})$ and $\Re z>-1/2$, one has
 \begin{equation}
   \chi(z;\nu)=\frac{(z+1/2)\Gamma(\nu)\Gamma(\nu+1)}{\Gamma(z+1/2)}\,\regpFq{3}{2}{\nu,\nu,z+1/2}{z+\nu-1/2,z+\nu+3/2}{1}.
  \label{eq:chi_simple_hyp_form3}
  \end{equation}
  \end{enumerate}
\end{prop}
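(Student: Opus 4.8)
The goal is to prove Proposition~\ref{prop:chi_expressions}, which gives three alternative hypergeometric representations of the characteristic function $\chi(z;\nu)$. Since formula~\eqref{eq:chi_hyper_form} from Proposition~\ref{prop:phi_fund_proper} already provides one hypergeometric form (valid for $\Re z>-3/2$), the natural strategy is to derive~\eqref{eq:chi_simple_hyp_form1}, \eqref{eq:chi_simple_hyp_form2}, and~\eqref{eq:chi_simple_hyp_form3} from it (and from each other) by applying a short sequence of known ${}_{3}F_{2}$ transformation and contiguity identities, then extending by analytic continuation in $z$ to the stated half-planes.

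First I would establish~\eqref{eq:chi_simple_hyp_form1}, which holds for all $z\in\C$ and hence is the most robust form to pin down. I expect the cleanest route is to start from the recursive/definitional identity $\chi(z;\nu)=\nu(\nu-1)\left[\phi_{-1}(z;\nu)-\phi_{0}(z;\nu)\right]$ and substitute the \emph{original} regularized form~\eqref{eq:def_phi} for $\phi_{-1}$ and $\phi_{0}$, namely
\[
 \phi_{n}(z;\nu)=\Gamma(n+\nu+1)\,\regpFq{3}{2}{1/2+z,1/2+z,3/2+z}{1+2z,n+\nu+3/2+z}{1}.
\]
The two series in $\phi_{-1}$ and $\phi_{0}$ are contiguous, differing only by a shift of one in the lower parameter $e=n+\nu+3/2+z$. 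I would then invoke a three-term contiguous relation linking these two ${}_{3}F_{2}$'s to a single ${}_{3}F_{2}$ with numerator parameters $z-1/2,z+1/2,z+3/2$ and denominators $2z+1,z+\nu+1/2$; the prefactor $(z+1/2)\Gamma(\nu+1)$ should drop out after collecting Pochhammer and Gamma factors. Since~\eqref{eq:def_phi} is entire in $z$ (via the regularization) and $\chi$ is entire by claim~1) of Proposition~\ref{prop:phi_fund_proper}, the resulting identity~\eqref{eq:chi_simple_hyp_form1} extends to all $z\in\C$.

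Next I would pass from~\eqref{eq:chi_simple_hyp_form1} to~\eqref{eq:chi_simple_hyp_form2} by a single Thomae transformation of the type~\eqref{eq:prudnikov_transf2}, applied to the ${}_{3}F_{2}$ in~\eqref{eq:chi_simple_hyp_form1} with a suitable assignment of $(a,b,c,d,e)$ (taking $a=z+3/2$ or $a=z-1/2$ so that the transformed numerator parameters become $\nu-1,\nu+1,z+1/2$); the hypothesis $\Re z>-1/2$ is exactly what is needed to keep both the applicability condition $\Re a>0$ and the convergence condition $\Re(d+e-a-b-c)>0$ in force, which explains the restricted domain. Finally, \eqref{eq:chi_simple_hyp_form3} differs from~\eqref{eq:chi_simple_hyp_form2} only by a contiguous shift in the denominator parameters (from the repeated $z+\nu+1/2$ to the pair $z+\nu-1/2,z+\nu+3/2$) together with the numerator change $\nu\pm1\mapsto\nu,\nu$, so it should follow from~\eqref{eq:chi_simple_hyp_form2} by another contiguity relation, or alternatively by applying a \emph{different} Thomae transformation to~\eqref{eq:chi_simple_hyp_form1}. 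The main obstacle I anticipate is purely bookkeeping: identifying the exact contiguous relation needed for~\eqref{eq:chi_simple_hyp_form1} and verifying that the Gamma-function prefactors match after the Thomae step, since these identities produce ratios of several $\Gamma$'s that must collapse to the clean factors $(z+1/2)\Gamma(\nu)\Gamma(\nu+1)/\Gamma(z+1/2)$. As in the proof of Lemma~\ref{lem:recur_phi}, any contiguous relation used can be checked directly by comparing coefficients of like powers in the series, so the argument is routine once the correct identity is selected.
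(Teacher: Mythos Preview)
Your overall architecture is right---derive everything from a single representation of $\chi$ by applying ${}_{3}F_{2}$ transformations---but two of the three concrete steps you propose do not go through as written.

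\textbf{Part 1).} The specific three--term contiguous relation you are hoping for does not exist. Starting from \eqref{eq:def_phi}, the combination $\nu(\nu-1)[\phi_{-1}-\phi_{0}]$ involves the two regularized ${}_{3}\tilde F_{2}$'s with common numerators $z+\tfrac12,z+\tfrac12,z+\tfrac32$, common first denominator $2z+1$, and second denominator $e=z+\nu+\tfrac12$ versus $e+1$. A straightforward coefficient comparison shows that there is \emph{no} nontrivial $(\alpha,\beta,\gamma)$ for which
\[
\alpha\,{}_{3}\tilde F_{2}\!\left[\begin{smallmatrix}z+1/2,\,z+1/2,\,z+3/2\\ 2z+1,\ e\end{smallmatrix}\right]
+\beta\,{}_{3}\tilde F_{2}\!\left[\begin{smallmatrix}z+1/2,\,z+1/2,\,z+3/2\\ 2z+1,\ e+1\end{smallmatrix}\right]
=\gamma\,{}_{3}\tilde F_{2}\!\left[\begin{smallmatrix}z-1/2,\,z+1/2,\,z+3/2\\ 2z+1,\ e\end{smallmatrix}\right]
\]
unless $\nu=-1$. (Matching the $k$th coefficients forces $\alpha=0$, $\beta=\gamma$, and then $e=z-\tfrac12$.) What a direct termwise combination \emph{does} produce is
\[
\chi(z;\nu)=(z+\tfrac12)(\nu-1)\,\Gamma(\nu+1)\,\regpFq{3}{2}{z+1/2,\,z+3/2,\,z+3/2}{2z+1,\,z+\nu+3/2}{1},
\]
which is a valid representation but not \eqref{eq:chi_simple_hyp_form1}. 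The paper avoids this by starting instead from the Thomae-transformed expression \eqref{eq:chi_hyper_form}, applying transformation \eqref{eq:prudnikov_transf1} to \emph{each} of the two ${}_{3}\tilde F_{2}$'s there, and only then combining the resulting series (now sharing numerators $z-\tfrac12,z+\tfrac12,z+\tfrac12$) term by term; the linear factor $z+\tfrac12+k$ that appears is precisely what upgrades one $(z+\tfrac12)_{k}$ to $(z+\tfrac12)_{k+1}$ and yields \eqref{eq:chi_simple_hyp_form1}.

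\textbf{Parts 2) and 3).} Here you have the two transformations swapped. Applying the Thomae transformation \eqref{eq:prudnikov_transf2} to \eqref{eq:chi_simple_hyp_form1} with $a=z+\tfrac12$ (not $z\pm\tfrac32$ or $z-\tfrac12$) gives new numerators $\nu,\nu,z+\tfrac12$ and denominators $z+\nu-\tfrac12,z+\nu+\tfrac32$, i.e.\ formula \eqref{eq:chi_simple_hyp_form3}, not \eqref{eq:chi_simple_hyp_form2}. For \eqref{eq:chi_simple_hyp_form2} one needs the \emph{other} transformation \eqref{eq:prudnikov_transf1}, applied to \eqref{eq:chi_simple_hyp_form1} with $a=z-\tfrac12$, $b=z+\tfrac32$, $c=z+\tfrac12$; this produces the numerators $\nu-1,\nu+1,z+\tfrac12$ and the repeated denominator $z+\nu+\tfrac12$. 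The applicability condition $\Re(d-c)>0$ of \eqref{eq:prudnikov_transf1} becomes $\Re z>-\tfrac12$, explaining the restricted half-plane in 2) and 3). So your ``bookkeeping'' worry is real: the specific identities must be chosen with care, and the ones you name do not land on the targets you claim.
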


\begin{proof}
1) First, applying the transformation~\cite[Eq.~7.4.4.1]{prudnikov-etal_vol3}
\begin{equation}
 \regpFq{3}{2}{a,b,c}{d,e}{1}=\frac{\Gamma(d+e-a-b-c)}{\Gamma(d-c)}\,\regpFq{3}{2}{e-a,e-b,c}{d+e-a-b,e}{1},
\label{eq:prudnikov_transf1}
\end{equation}
which holds true if $\Re(d+e-a-b-c)>0$ and $\Re(d-c)>0$, in~\eqref{eq:chi_hyper_form}, we obtain
\begin{align*}
 \chi(z;\nu)&=\Gamma(\nu+1)\bigg[\regpFq{3}{2}{z-1/2,z+1/2,z+1/2}{2z+1,z+\nu-1/2}{1}\\
 &\hskip139pt-(\nu-1)\,\regpFq{3}{2}{z-1/2,z+1/2,z+1/2}{2z+1,z+\nu+1/2}{1}\bigg]\\
&=\Gamma(\nu+1)\sum_{k=0}^{\infty}\frac{(z-1/2)_{k}(z+1/2)_{k}^{2}}{k!\,\Gamma(2z+1+k)\Gamma(z+\nu+1/2+k)}\left[z+\nu-\frac{1}{2}+k-(\nu-1)\right]\\
&=\Gamma(\nu+1)\sum_{k=0}^{\infty}\frac{(z-1/2)_{k}(z+1/2)_{k}(z+1/2)_{k+1}}{k!\,\Gamma(2z+1+k)\Gamma(z+\nu+1/2+k)},
\end{align*}
for $\nu>0$ and $\Re z>-3/2$. This yields formula~\eqref{eq:chi_simple_hyp_form1} for $\nu>0$ and $\Re z>-3/2$. Recall $\chi(z;\nu)$ is analytic in $z\in\C$ and $\nu\in\R\setminus(-\N_{0})$. Since the right-hand side of~\eqref{eq:chi_simple_hyp_form1} is a well defined function for all $\nu>0$ and entire in $z$, the formula~\eqref{eq:chi_simple_hyp_form1} extends to all $z\in\C$ by analyticity.
 
2) If we apply~\eqref{eq:prudnikov_transf1} once more this time in~\eqref{eq:chi_simple_hyp_form1} with
\[
 a=z-\frac{1}{2}, \quad b=z+\frac{3}{2}, \quad c=z+\frac{1}{2}, \quad d=2z+1, \quad e=z+\nu+\frac{1}{2},
\]
we establish~\eqref{eq:chi_simple_hyp_form2} for $\nu>0$ and $\Re z>-1/2$. Since the right-hand side of~\eqref{eq:chi_simple_hyp_form2} is analytic in $\nu\in\R\setminus(-\N_{0})$, the formula extends by analyticity.

3) Finally, the Thomae transformation~\eqref{eq:prudnikov_transf2} used in~\eqref{eq:chi_simple_hyp_form1} with
\[
 a=z+\frac{1}{2}, \quad b=z-\frac{1}{2}, \quad c=z+\frac{3}{2}, \quad d=2z+1, \quad e=z+\nu+\frac{1}{2},
\]
yields~\eqref{eq:chi_simple_hyp_form3} for $\nu>0$ and $\Re z>-1/2$ which extends again to all $\nu\in\R\setminus(-\N_{0})$ by analyticity.
\end{proof}

\subsection{Spectral analysis of $J_{\nu}$}
\label{subsec:spectrum_of_J_nu}

With the aid of Proposition~\ref{prop:phi_fund_proper}, we may determine the point spectrum of $J_{\nu}$.

\begin{prop}\label{prop:point_spec_J_nu}
 For all $\nu\in\R\setminus(-\N_{0})$, we have
 \[
  \sigma_{\text{p}}(J_\nu)=\left\{\frac{1}{4}-x^{2} \;\bigg|\; \chi(x;\nu)=0, \; x>0\right\}.
 \]
 In addition, $0\notin\sigma_{\text{p}}(J_{\nu})$.
\end{prop}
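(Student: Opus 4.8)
The plan is to characterize $\sigma_{\text{p}}(J_\nu)$ by finding, for each candidate eigenvalue, a genuine $\ell^2(\N_0)$ eigenvector built from the solutions $\phi(z;\nu)$ constructed in Proposition~\ref{prop:phi_fund_proper}. Since $J_\nu$ is self-adjoint and semi-bounded below by $\min(0,\nu)$ (Lemma~\ref{lem:J_positive}), its eigenvalues are real, so I parametrize a prospective eigenvalue as $\lambda=\tfrac14-z^2$. For such $\lambda$ real with $\lambda<\tfrac14$ we have $z^2>0$, and I may take $z>0$; for $\lambda>\tfrac14$ one would take $z$ purely imaginary, but I will argue that no eigenvalues arise there, consistent with the claimed form $x>0$. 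The key structural fact is claim~1) of Proposition~\ref{prop:phi_fund_proper}: the vector $\phi(z;\nu)$ satisfies the eigenvalue recurrence at every interior site and fails the boundary equation at $n=0$ by exactly the defect term $\chi(z;\nu)e_0$. Hence $J_\nu\phi(z;\nu)=(\tfrac14-z^2)\phi(z;\nu)$ holds as a full eigenvalue equation \emph{precisely when} $\chi(z;\nu)=0$.

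First I would establish square-summability. By claim~2) of Proposition~\ref{prop:phi_fund_proper}, $\phi_n(z;\nu)\sim n^{-z-1/2}/\Gamma(1+2z)$ as $n\to\infty$, so $\phi(z;\nu)\in\ell^2(\N_0)$ exactly when $2\,\mathrm{Re}(z+1/2)>1$, i.e. $\mathrm{Re}\,z>0$. For $z>0$ this holds, giving genuine $\ell^2$ decay. This shows the inclusion $\supseteq$: if $x>0$ with $\chi(x;\nu)=0$, then $\phi(x;\nu)$ is a nonzero element of $\ell^2(\N_0)$ (nonzero because its leading asymptotics are nonvanishing) satisfying $J_\nu\phi(x;\nu)=(\tfrac14-x^2)\phi(x;\nu)$, whence $\tfrac14-x^2\in\sigma_{\text{p}}(J_\nu)$. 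Conversely, for $\subseteq$, suppose $\lambda\in\sigma_{\text{p}}(J_\nu)$; reality gives $\lambda=\tfrac14-z^2$. For eigenvalues below the continuous spectrum I expect $z$ real and positive, and since the recurrence is second order with a one-dimensional solution space decaying at infinity (the other solution $\phi(-z;\nu)\sim n^{z-1/2}$ grows), any $\ell^2$ eigenvector must be proportional to $\phi(z;\nu)$; the boundary defect then forces $\chi(z;\nu)=0$. The Wronskian computation in claim~3), $W(\phi(z;\nu),\phi(-z;\nu))=\sin(2\pi z)/\pi$, confirms that $\phi(z;\nu)$ and $\phi(-z;\nu)$ are independent for $2z\notin\Z$, so the decaying solution is one-dimensional and the eigenvector is uniquely determined up to scale.

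For the final assertion $0\notin\sigma_{\text{p}}(J_\nu)$, I would invoke claim~(ii) of Lemma~\ref{lem:tridiagonal_inverse}: any solution of $\mathcal{J}\cdot x=0$ is constant, $x_n=x_0$ for all $n$, which is not in $\ell^2(\N_0)$ unless $x=0$. Thus $\Ker J_\nu=\{0\}$ and $0$ is not an eigenvalue. The main obstacle I anticipate lies in the $\subseteq$ direction, specifically in ruling out eigenvalues in the region $\lambda>\tfrac14$ (where $z$ is imaginary) and in handling the exceptional set $2z\in\Z$ where the Wronskian vanishes and $\phi(\pm z;\nu)$ degenerate. For $\lambda>\tfrac14$ one must show the generalized eigenvectors oscillate and cannot be square-summable, matching the absolutely continuous spectrum $[1/4,\infty)$ of $L_\nu^{-1}$; for the degenerate values one needs a separate argument (e.g. a limiting or logarithmic-solution analysis) to confirm no $\ell^2$ solution is created. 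I would lean on the asymptotics of claim~2) together with the self-adjointness and semi-boundedness of $J_\nu$ to confine eigenvalues to $z>0$, so that the clean equivalence $\lambda\in\sigma_{\text{p}}(J_\nu)\Leftrightarrow\chi(x;\nu)=0$ with $x>0$ emerges.
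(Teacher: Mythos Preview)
Your proposal is essentially correct and follows the same route as the paper: for $\lambda=\tfrac14-x^2$ with $x>0$ the vector $\phi(x;\nu)$ is the unique (up to scale) $\ell^2$ solution of the recurrence, so $\lambda\in\sigma_{\p}(J_\nu)$ iff the boundary defect $\chi(x;\nu)$ vanishes; and $0\notin\sigma_{\p}(J_\nu)$ follows from Lemma~\ref{lem:tridiagonal_inverse}(ii). The paper organizes this into four explicit cases $z=\ii y$, $z=x>0$, $z=0$, $z=1/2$, but the substance is the same.

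Two small points where the paper is more precise than your sketch. First, for $\lambda>\tfrac14$ (i.e.\ $z=\ii y$, $y>0$) the paper does not invoke semi-boundedness or prior knowledge of the absolutely continuous spectrum, which would be circular here; instead it writes the unique boundary-condition-satisfying solution as $\psi(y;\nu)=\chi(-\ii y;\nu)\phi(\ii y;\nu)-\chi(\ii y;\nu)\phi(-\ii y;\nu)$, checks via the Wronskian that $\psi_0(y;\nu)\neq0$, and then reads off from the asymptotics that $\psi_n\sim n^{-1/2}\cdot(\text{oscillation})$ is not square-summable. Second, the borderline $\lambda=\tfrac14$ (i.e.\ $z=0$) is the genuinely delicate case you flagged: the paper treats it by the method of successive approximations (Appendix), producing two independent solutions with asymptotics $n^{-1/2}$ and $n^{-1/2}\log n$, neither in $\ell^2$, so $\tfrac14\notin\sigma_{\p}(J_\nu)$. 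This is exactly the ``logarithmic-solution analysis'' you anticipated.
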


\begin{proof}
 We write the spectral parameter $\lambda\in\R$ in the form $\lambda=1/4-z^{2}$, for $z\in\R\cup\ii\R$. In fact, without loss of generality, it suffices to consider the following four cases:
 \[
  i)\, z=\ii y,\, y>0, \qquad ii)\,z=x,\, x>0, \qquad iii)\, z=0, \qquad iv)\, z=\frac{1}{2}.
 \] 
 
 Ad i) By Proposition~\ref{prop:phi_fund_proper}, vectors $\phi(\pm\ii y;\nu)$ are linearly independent for all $y>0$ and their linear combination 
 \[
  \psi(y;\nu):=\chi(-\ii y;\nu)\phi(\ii y;\nu)-\chi(\ii y;\nu)\phi(-\ii y;\nu)
 \]
 is a solution of the eigenvalue equation $J_{\nu}\psi(y;\nu)=\left(1/4+y^{2}\right)\psi(y;\nu)$ which is determined uniquely up to a multiplicative constant. Vector $\psi(y;\nu)$ is nontrivial since 
 \begin{align*}
  \psi_{0}(y;\nu)&=\nu(\nu-1)\left[\phi_{-1}(-\ii y;\nu)\phi_{0}(\ii y;\nu)-\phi_{-1}(\ii y;\nu)\phi_{0}(-\ii y;\nu)\right]=W\left(\phi(-\ii y;\nu),\phi(\ii y;\nu)\right)\\
  &=\frac{\sinh(2\pi y)}{\ii\pi}\neq0,
 \end{align*}
 for $y>0$. In other words, the coefficients $\chi(\pm\ii y;\nu)$ cannot vanish simultaneously. Moreover, it follows from claim~2) of Proposition~\ref{prop:phi_fund_proper} that
 \[
  \psi_{n}(y;\nu)=\frac{1}{\sqrt{n}}\left[\frac{\chi(-\ii y;\nu)}{\Gamma(1+2\ii y)}\,n^{-\ii y}-\frac{\chi(\ii y;\nu)}{\Gamma(1-2\ii y)}\,n^{\ii y}\right]\!\left(1+O\left(\frac{1}{n}\right)\right)\!, \quad n\to\infty.
 \]
 Hence $\psi(y;\nu)\notin\ell^{2}(\N_{0})$, for all $y>0$. Consequently, $(1/4,\infty)\cap\sigma_{\p}(J_{\nu})=\emptyset$.
 
 Ad ii) By the asymptotic expansion from claim 2) of Proposition~\ref{prop:phi_fund_proper}, $\phi(x;\nu)$ is non-trivial and square summable for all $x>0$. Moreover, we have $J_{\nu}\phi(x;\nu)=\left(1/4-x^{2}\right)\phi(x;\nu)$ if and only if $\chi(x;\nu)=0$. Thus, $1/4-x^{2}\in\sigma_{\p}(J_{\nu})$, for $x>0$, if and only if $\chi(x;\nu)=0$.

 Ad iii) Next, we verify that $1/4\notin\sigma_{\p}(J_\nu)$. Note that Proposition~\ref{prop:phi_fund_proper} does not provide us with two linearly independent solutions of the difference equation
 \begin{equation}
 -(n-1+\nu)(n+\nu)\phi_{n-1}+\left[2(n+\nu)^{2}-\frac{1}{4}\right]\phi_{n}-(n+\nu)(n+1+\nu)\phi_{n+1}=0,
 \label{eq:diff_eq_z=0}
 \end{equation}
 but only one $\phi=\phi(0;\nu)$. The second solution of~\eqref{eq:diff_eq_z=0} can be identified in terms of the hypergeometric functions, too, namely
 \[
  \xi_{n}=\Gamma(-n-\nu)\Gamma(-n-\nu+1)\,\regpFq{3}{2}{-1/2,-n-\nu,-n-\nu}{-n-\nu+1/2,-n-\nu+1/2}{1}, \quad n\in\N_{0}.
 \]
 In contrast to $\phi_{n}$, a direct asymptotic analysis of solution $\xi_{n}$, for $n\to\infty$, is a difficult task. However, in order to verify that $1/4\notin\sigma_{\p}(J_\nu)$, an explicit asymptotic formula is not needed. At this point, it is more straightforward to apply the method of successive approximations to the difference equation~\eqref{eq:diff_eq_z=0}; see~\cite{li-wong_jcam92}.
 
 The method of successive approximations implies that there are two linearly independent solutions of~\eqref{eq:diff_eq_z=0} with the asymptotic behavior
 \begin{equation}
  \phi_{n}=\frac{1}{\sqrt{n}}\left[1+O\left(\frac{1}{n}\right)\right] \quad\mbox{ and }\quad \xi_{n}=\frac{\log n}{\sqrt{n}}\left[1+O\left(\frac{1}{n}\right)\right],
 \label{eq:sol_asympt_z=0}
 \end{equation}
 for $n\to\infty$; more details are given in Appendix~\ref{subsec:a.1} for reader's convenience. Now, it is obvious from~\eqref{eq:sol_asympt_z=0} that there is no nontrivial linear combination of $\phi$ and $\xi$, which belongs to $\ell^{2}(\N_{0})$. It means that $1/4\notin\sigma_{\text{p}}(J_\nu)$.
 
  Ad iv) The claim $0\notin\sigma_{\p}(J_{\nu})$ is a general fact, which follows from Lemma~\ref{lem:tridiagonal_inverse}. Nevertheless, we can also check that $\chi(1/2;\nu)\neq0$ by a direct computation. To this end, one uses~\eqref{eq:chi_hyper_form} getting
  \[
  \chi\left(\frac{1}{2};\nu\right)=\Gamma(\nu)\Gamma(\nu+1)\left[\frac{1}{\Gamma^{2}(\nu)}-\frac{\nu(\nu-1)}{\Gamma^{2}(\nu+1)}\right]=1.
  \]
\end{proof}

Our next goal is to compute the Weyl $m$-function of $J_{\nu}$, 
\[
 m(\lambda;\nu):=\langle e_{0}, (J_{\nu}-\lambda)^{-1}e_{0}\rangle,
\]
for $\lambda\in\C\setminus\R$, and deduce the spectral measure $\mu_{\nu}:=\langle e_{0},E_{J_{\nu}}e_{0}\rangle$, where $E_{J_{\nu}}$ is the projection-valued spectral measure of the self-adjoint operator $J_{\nu}$. We refer the reader to~\cite{akh_65,tes_00} for general theory of Jacobi operators. In order to deduce a formula for the Weyl $m$-function, we make use of the formula for a general formula for the Green kernel of a Jacobi operator, which, if applied to~$J_{\nu}$, reads
\begin{equation}
 \langle e_{m}, (J_{\nu}-\lambda)^{-1}e_{n}\rangle=\frac{1}{W(\phi,\psi)}
 \begin{cases}
   \psi_{m}\phi_{n},& \quad\mbox{ if } m\leq n,\\
   \psi_{n}\phi_{m},& \quad\mbox{ if } n\leq m,
 \end{cases}
\label{eq:green_kernel}
\end{equation}
where $\psi$ is a generalized solution to the eigenvalue equation $J_{\nu}\psi=\lambda\psi$, and $\phi$ is a square summable solution of the recurrence from Lemma~\ref{lem:recur_phi} with $\lambda=1/4-z^{2}$.

\begin{prop}\label{prop:m-func}
 Suppose $\nu\in\R\setminus(-\N_{0})$, $\lambda=1/4-z^{2}$, and $\Re z>0, \Im z>0$. Then we have
 \[
  m(\lambda;\nu)=\frac{\phi_{0}(z;\nu)}{\chi(z;\nu)}.
 \]
\end{prop}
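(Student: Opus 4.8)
The plan is to read off $m(\lambda;\nu)$ from the Green kernel formula~\eqref{eq:green_kernel} at the corner $m=n=0$, where it becomes $m(\lambda;\nu)=\psi_{0}\phi_{0}/W(\phi,\psi)$, and then to produce explicitly the two solutions that enter it. Since $J_{\nu}$ is self-adjoint and $\lambda=1/4-z^{2}$ has $\Im\lambda=-2\,\Re z\,\Im z\neq0$ in the prescribed quadrant, the resolvent is bounded and~\eqref{eq:green_kernel} applies as soon as I exhibit a square summable solution $\phi$ at infinity together with a solution $\psi$ of $J_{\nu}\psi=\lambda\psi$ that respects the boundary row $n=0$.

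For the decaying solution I take $\phi=\phi(z;\nu)$. By claim~2) of Proposition~\ref{prop:phi_fund_proper}, $\phi_{n}(z;\nu)\sim n^{-z-1/2}/\Gamma(1+2z)$, so $\phi(z;\nu)\in\ell^{2}(\N_{0})$ precisely because $\Re z>0$, whereas $\phi(-z;\nu)\sim n^{z-1/2}$ is not square summable; this is the only place the hypothesis $\Re z>0$ is used. For $\psi$ I use claim~1) of the same proposition, namely $J_{\nu}\phi(\pm z;\nu)=\lambda\phi(\pm z;\nu)+\chi(\pm z;\nu)e_{0}$, to annihilate the defect at $e_{0}$: the combination
\[
 \psi:=\chi(-z;\nu)\,\phi(z;\nu)-\chi(z;\nu)\,\phi(-z;\nu)
\]
satisfies $J_{\nu}\psi=\lambda\psi$ with no residual $e_{0}$ term and is therefore the sought boundary solution. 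Because $\Im z>0$ forces $2z\notin\Z$, the vectors $\phi(\pm z;\nu)$ are linearly independent by claim~3), and since $\lambda\notin\R$ is not an eigenvalue (Proposition~\ref{prop:point_spec_J_nu}) we have $\chi(z;\nu)\neq0$, so $\psi\not\equiv0$.

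It remains to evaluate $\psi_{0}$ and $W(\phi,\psi)$, and the pleasant point is that both collapse to the single Wronskian $W(\phi(z;\nu),\phi(-z;\nu))=\sin(2\pi z)/\pi$ from claim~3). By bilinearity and $W(u,u)=0$,
\[
 W(\phi(z;\nu),\psi)=-\chi(z;\nu)\,W(\phi(z;\nu),\phi(-z;\nu))=-\chi(z;\nu)\,\frac{\sin(2\pi z)}{\pi}.
\]
For the numerator I substitute $\chi(\pm z;\nu)=\nu(\nu-1)[\phi_{-1}(\pm z;\nu)-\phi_{0}(\pm z;\nu)]$ into $\psi_{0}=\chi(-z;\nu)\phi_{0}(z;\nu)-\chi(z;\nu)\phi_{0}(-z;\nu)$; the two $\phi_{0}\phi_{0}$ terms cancel and, on recognizing $b_{-1}(\nu)=\nu(\nu-1)$, the remainder is, up to an overall sign, the value of $W(\phi(z;\nu),\phi(-z;\nu))$ computed at the index $n=-1$ (the Wronskian being $n$-independent), whence $\psi_{0}=-\sin(2\pi z)/\pi$. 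Dividing, the common factor $\sin(2\pi z)/\pi$ cancels and I arrive at $m(\lambda;\nu)=\psi_{0}\phi_{0}(z;\nu)/W(\phi,\psi)=\phi_{0}(z;\nu)/\chi(z;\nu)$.

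The only genuinely delicate step is this last reduction of $\psi_{0}$: it forces me to work with the recursive extension of $\phi_{n}(z;\nu)$ down to $n=-1$ and to keep the value $b_{-1}(\nu)=\nu(\nu-1)$ consistent with the sign convention built into the definition of the Wronskian. Everything else is the standard half-line Jacobi Green-function computation, which is insensitive to the normalization of $\phi$ and $\psi$.
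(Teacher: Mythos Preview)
Your proof is correct and follows essentially the same route as the paper's: you use the Green kernel formula~\eqref{eq:green_kernel} at $m=n=0$, take the decaying solution $\phi(z;\nu)$ and the boundary solution $\psi=\chi(-z;\nu)\phi(z;\nu)-\chi(z;\nu)\phi(-z;\nu)$, and reduce both $\psi_{0}$ and $W(\phi,\psi)$ to the Wronskian $W(\phi(z;\nu),\phi(-z;\nu))$ so that it cancels. The only cosmetic differences are that you spell out the reduction $\psi_{0}=-W(\phi(z;\nu),\phi(-z;\nu))$ via the definition of $\chi$ and the index shift to $n=-1$ (the paper just asserts this, having done the analogous computation in the proof of Proposition~\ref{prop:point_spec_J_nu}), and that your justification of $\chi(z;\nu)\neq0$ really rests on self-adjointness of $J_{\nu}$ together with claims~1) and~2) of Proposition~\ref{prop:phi_fund_proper}, not on Proposition~\ref{prop:point_spec_J_nu} itself.
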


\begin{proof}
Note that $\Im\lambda<0$ for $\Re z>0, \Im z>0$.
It follows from Proposition~\ref{prop:phi_fund_proper} that the vector
 \[
  \psi(z;\nu):=\chi(-z;\nu)\phi(z;\nu)-\chi(z;\nu)\phi(-z;\nu)
 \]
 solves the eigenvalue equation 
 $J_{\nu}\psi(z;\nu)=\lambda\psi(z;\nu)$ and also that $\phi(z;\nu)$, the solution of the recurrence from Lemma~\ref{lem:recur_phi}, belongs to $\ell^{2}(\N_{0})$ since we suppose $\Re z>0$. Notice further that 
 \[
  \psi_{0}(z;\nu)=-W(\phi(z;\nu),\phi(-z;\nu)).
 \]
 Hence
 \[
 W(\phi(z;\nu),\psi(z;\nu))=-\chi(z;\nu)W(\phi(z;\nu),\phi(-z;\nu))=\chi(z;\nu)\psi_{0}(z;\nu),
 \]
 and by general formula~\eqref{eq:green_kernel} for the Green kernel of $J_{\nu}$, we obtain
 \[
  m(\lambda;\nu)=\frac{\psi_{0}(z;\nu)\phi_{0}(z;\nu)}{W(\phi(z;\nu),\psi(z;\nu))}=\frac{\phi_{0}(z;\nu)}{\chi(z;\nu)}.
 \]
\end{proof}

\begin{rem}
 Of course, we can combine Proposition~\ref{prop:m-func} with formula~\eqref{eq:def_phi_extended} for $\phi_{0}(z;\nu)$ and any expression from~\eqref{eq:chi_hyper_form}, \eqref{eq:chi_simple_hyp_form1}, \eqref{eq:chi_simple_hyp_form2}, or \eqref{eq:chi_simple_hyp_form3} to express $m(\,\cdot\,;\nu)$ as a ratio of hypergeometric functions. For example, for $\nu\in\R\setminus(-\N_{0})$ and $\Re z>0, \Im z>0$, we have
\[
 m(\lambda;\nu)=\frac{4}{(2z+1)^{2}}\frac{\displaystyle\regpFq{3}{2}{z-1/2,\nu,\nu}{\nu+z+1/2,\nu+z+1/2}{1}}{\displaystyle\regpFq{3}{2}{z+1/2,\nu,\nu}{\nu+z-1/2,\nu+z+3/2}{1}},
\] 
where $\lambda=1/4-z^{2}$.
\end{rem}

The Weyl $m$-function coincides with the Cauchy (or Stieltjes) transform of the spectral measure, i.e., 
\[
 m(\lambda;\nu)=\int_{\R}\frac{\dd\mu_{\nu}(x)}{x-\lambda}, \quad \lambda\in\C\setminus\R. 
\]
Recall the spectral measure $\mu_{\nu}$ is completely determined by its Cauchy transform $m(\,\cdot\,;\nu)$ given on $\C\setminus\R$. In particular, $\supp\mu_{\nu}$ coincides with the domain of analyticity of $m(\,\cdot\,;\nu)$. By Proposition~\ref{prop:m-func}, we have a formula for $m(\lambda;\nu)$ if $\Im\lambda<0$.
By self-adjointness of $J_{\nu}$, we have the symmetry 
\[
\overline{m(\lambda;\nu)}=m(\overline{\lambda};\nu), \quad \lambda\in\C\setminus\R,
\]
which means that $m(\lambda;\nu)$ is determined by Proposition~\ref{prop:m-func} on the half-plane $\Im\lambda>0$, too.

\begin{prop}\label{prop:spectral_meas_J_nu}
 For all $\nu\in\R\setminus(-\N_{0})$, the spectral measure $\mu_{\nu}=\mu_{\nu}^{d}+\mu_{\nu}^{ac}$, where $\mu_{\nu}^{d}$ and $\mu_{\nu}^{ac}$ is the discrete and the absolutely continuous part of $\mu_{\nu}$, respectively. 
  \begin{enumerate}[1)]
   \item Discrete measure $\mu_{\nu}^{d}$ is supported on the set positive zeros of  $\chi(\cdot;\nu)$  	and
   \[
    \mu_{\nu}^{d}=\sum_{\substack{x_{0}>0 \\ \chi(x_{0};\nu)=0}}\frac{\phi_{0}(x_{0};\nu)}{\partial_{x}\chi(x_{0};\nu)}\,\delta_{1/4-x_{0}^{2}},
   \]
   where $\delta_{y_{0}}$ denotes the unit-mass Dirac measure supported on the one-point set $\{y_{0}\}$.
   \item Absolutely continuous measure $\mu_{\nu}^{ac}$ is supported on $[1/4,\infty)$ and its density reads
 \[
  \frac{\dd\mu_{\nu}}{\dd x}\!\left(\frac{1}{4}+x^{2}\right)=\frac{\sinh(2\pi x)}{2\pi^{2}\left|\chi(\ii x;\nu)\right|^{2}},
 \]
 for $x>0$.
  \end{enumerate}
\end{prop}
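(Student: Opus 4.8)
The plan is to reconstruct $\mu_{\nu}$ from its Cauchy transform $m(\,\cdot\,;\nu)$ through the Stieltjes--Perron inversion formula, using the closed form $m(\lambda;\nu)=\phi_{0}(z;\nu)/\chi(z;\nu)$ from Proposition~\ref{prop:m-func} (valid for $\Re z>0$, $\Im z>0$) together with the self-adjointness symmetry $\overline{m(\lambda;\nu)}=m(\overline{\lambda};\nu)$. Under the substitution $\lambda=1/4-z^{2}$ the ray $z=x>0$ is mapped onto $(-\infty,1/4)$ and the ray $z=\ii x$, $x>0$, onto the cut $(1/4,\infty)$, and I would analyze these two regions separately.

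On $(-\infty,1/4)$ the Weyl solution is the square-summable vector $\phi(x;\nu)$ (square summable for $x>0$ by claim~2) of Proposition~\ref{prop:phi_fund_proper}), so $m(\lambda;\nu)$ is real and meromorphic there, with poles precisely where $\chi(x;\nu)=0$, i.e.\ at the eigenvalues $1/4-x_{0}^{2}$ located in Proposition~\ref{prop:point_spec_J_nu}. Hence $\Im m(\lambda+\ii0)=0$ on $(-\infty,1/4)$ and the only contribution from this region is a pure point part, whose atom at $\lambda_{0}=1/4-x_{0}^{2}$ equals $-\Res_{\lambda=\lambda_{0}}m(\lambda;\nu)$. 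Since $\chi$ has a simple zero at $z=x_{0}$ (eigenvalues of the self-adjoint Jacobi operator $J_{\nu}$ being simple) and $\lambda-\lambda_{0}=-(z-x_{0})(z+x_{0})$, expanding the ratio $\phi_{0}/\chi$ about $z=x_{0}$ evaluates this residue in terms of $\phi_{0}(x_{0};\nu)$ and $\partial_{x}\chi(x_{0};\nu)$ and yields the stated discrete measure $\mu_{\nu}^{d}$.

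For the continuous part I would compute the boundary value on the cut. For $\lambda=1/4+x^{2}$, $x>0$, tracking the principal branch shows that $\lambda-\ii0$ corresponds to $z\to\ii x$ (so Proposition~\ref{prop:m-func} applies directly) while $\lambda+\ii0$ corresponds to $z\to-\ii x$; combined with the reality of the hypergeometric coefficients, $\overline{\phi_{0}(\ii x)}=\phi_{0}(-\ii x)$ and $\overline{\chi(\ii x)}=\chi(-\ii x)$, this gives
\[
 \Im m(\lambda+\ii0)=\frac{1}{2\ii}\,\frac{\phi_{0}(-\ii x)\chi(\ii x)-\phi_{0}(\ii x)\chi(-\ii x)}{|\chi(\ii x;\nu)|^{2}}.
\]
The numerator is exactly $-\psi_{0}(\ii x;\nu)$, and since $\psi_{0}(z;\nu)=-W(\phi(z;\nu),\phi(-z;\nu))$, it equals $W(\phi(\ii x;\nu),\phi(-\ii x;\nu))=\sin(2\pi\ii x)/\pi=\ii\sinh(2\pi x)/\pi$ by claim~3) of Proposition~\ref{prop:phi_fund_proper}. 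Substituting and using $\dd\mu_{\nu}/\dd\lambda=\pi^{-1}\Im m(\,\cdot\,+\ii0)$ produces the density $\sinh(2\pi x)/\bigl(2\pi^{2}|\chi(\ii x;\nu)|^{2}\bigr)$.

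Finally, to conclude $\mu_{\nu}=\mu_{\nu}^{d}+\mu_{\nu}^{ac}$ I must exclude a singular continuous component. The key observation is that $m(\,\cdot\,;\nu)$ extends continuously to the real axis from both half-planes on $\R\setminus(\{1/4\}\cup\sigma_{\p}(J_{\nu}))$ --- real-analytically away from the isolated poles below $1/4$, and with the finite, locally integrable boundary density computed above on $(1/4,\infty)$ --- so the Stieltjes--Perron theorem forces the singular continuous part to vanish, while the single edge point $1/4$ carries no mass because $1/4\notin\sigma_{\p}(J_{\nu})$. I expect the branch bookkeeping in the inversion on the cut (matching $\lambda\pm\ii0$ to $z=\mp\ii x$) and the control of the edge behaviour at $\lambda=1/4$, where $\sinh(2\pi x)\sim2\pi x$, to be the genuinely delicate points, whereas the remaining algebra collapses onto the Wronskian identity already in hand.
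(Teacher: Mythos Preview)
Your proposal is correct and follows essentially the same route as the paper's proof: both split the analysis of $m(\lambda;\nu)=\phi_{0}(z;\nu)/\chi(z;\nu)$ into residues at simple zeros of $\chi$ on $(-\infty,1/4)$ and a boundary-jump computation via the Wronskian identity on $(1/4,\infty)$, followed by Stieltjes--Perron inversion. The paper additionally verifies $L^{1}$-integrability of the density through Stirling asymptotics of $|\chi(\ii x;\nu)|^{2}$, whereas you argue the absence of a singular continuous part more directly from continuity of the boundary values; otherwise the arguments coincide.
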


\begin{proof}
We write again $\lambda=1/4-z^{2}$ and investigate singular points of $m(\lambda;\nu)$ for $\lambda\in\R$. Clearly, $\lambda=\lambda(z)$ maps the positive imaginary line onto $(1/4,\infty)$ and the positive real line onto $(-\infty,1/4)$. 
 
 First, by inspection of the formula $m(\lambda;\nu)=\phi_{0}(z;\nu)/\chi(z;\nu)$, for $z>0$, one finds that the only singular points of $m(\,\cdot\,;\nu)$ located in $(-\infty,1/4)$ are simple poles determined by zeros of $\chi(\,\cdot\,;\nu)$ in $(0,\infty)$. Indeed, since $\chi(\,\cdot\,;\nu)$ is an analytic non-trivial function it has only isolated zeros. Further, observe that $\phi_{0}(z;\nu)$ and $\chi(z;\nu)$ cannot have a common zero $z>0$ since otherwise, by Lemma~\ref{lem:recur_phi}, one would have $\phi_{n}(z;\nu)=0$ for all $n\in\N$, which is in contradiction with claim 2) of Proposition~\ref{prop:phi_fund_proper}. Next, as we know from Proposition~\ref{prop:point_spec_J_nu}, positive zeros of $\chi(\,\cdot\,;\nu)$ coincide with discrete eigenvalues of~$J_{\nu}$. Their algebraic multiplicities are equal to their orders as poles of the $m$-function of $J_{\nu}$~\cite[Thm.~2.14]{bec_jcam00}. Recall the eigenvalues of $J_{\nu}$ are always simple since the eigenvector is uniquely determined by its first entry and $J_{\nu}$ is self-adjoint. Hence we see that the zeros (if any) of $\chi(\,\cdot\,;\nu)$ in $(0,\infty)$ are simple. This means that the measure $\mu_{\nu}$ is only atomic (or void) in $(-\infty,1/4)$ and the corresponding weight is determined by the residue of the $m$-function at $x_{0}$, i.e., 
 \[
  \mu_{\nu}\!\left(\left\{\frac{1}{4}-x_{0}^{2}\right\}\right)=\frac{\phi_{0}(x_{0};\nu)}{\partial_{x}\chi(x_{0};\nu)},
 \]  
 provided that $\chi(x_{0};\nu)=0$, $x_{0}>0$. 
 
 Next, we show that $m(\,\cdot\,;\nu)$ has a branch cut in $(1/4,\infty)$. In fact, we show the jump of $\Im m(\,\cdot\,;\nu)$ in $(1/4,\infty)$ is an integrable function and hence determines the density of the absolutely continuous part of $\mu_{\nu}$ by means of the Stieltjes--Perron inversion formula~\cite[Thm.~B.2]{tes_00}. Notice that $\lambda(z)\to1/4+x^{2}$, for $x>0$, from the lower half-plane $\Im\lambda<0$ if and only if $z\to\ii x$ from the right half-plane $\Re z>0$. Taking also Proposition~\ref{prop:phi_fund_proper} into account, we compute
 \begin{align*}
  h(x)&:=\lim_{\substack{\lambda\to1/4+x^{2} \\ \Im\lambda<0}} \Im m(\lambda;\nu)= \frac{1}{2\ii}\lim_{\substack{z\to\ii x \\ \Re z>0}}\left[m(\lambda(z);\nu)-m(\lambda(\overline{z});\nu)\right]\\
  &=\frac{1}{2\ii}\left[\frac{\phi_{0}(\ii x;\nu)}{\chi(\ii x;\nu)}-\frac{\phi_{0}(-\ii x;\nu)}{\chi(-\ii x;\nu)}\right]
  =\frac{W(\phi(-\ii x;\nu),\phi(\ii x;\nu))}{2\ii\left|\chi(\ii x;\nu)\right|^{2}}
  =-\frac{\sinh(2\pi x)}{2\pi\left|\chi(\ii x;\nu)\right|^{2}},
 \end{align*}
 for $x>0$. Note that the function $h$ is continuous in $[0,\infty)$. Moreover, a straightforward application of the Stirling formula in~\eqref{eq:chi_simple_hyp_form2} yields
 \[
  \left|\chi(\ii x;\nu)\right|^{2}=\frac{\Gamma^{2}(\nu)\Gamma^{2}(\nu+1)}{8\pi^{3}}x^{2-4\nu}e^{3\pi x}\left[1+O\left(\frac{1}{x}\right)\right], \quad x\to\infty.
 \] 
 Thus, $h\in L^{1}(0,\infty)$. By the Stieltjes--Perron inversion formula, $\mu_{\nu}$ is absolutely continuous in $[1/4,\infty)$ and its density is given by the equation 
 \[
 \frac{\dd\mu_{\nu}}{\dd x}\!\left(\frac{1}{4}+x^{2}\right)=-\frac{1}{\pi} h(x),
 \]
 for $x>0$.
\end{proof}

As an immediate corollary of Proposition~\ref{prop:spectral_meas_J_nu}, we have the spectrum of~$J_{\nu}$ and its parts.

\begin{cor}\label{cor:spectrum_J_nu}
For any $\nu\in\R\setminus(-\N_{0})$, the spectrum of $J_{\nu}$ is simple and decomposes as $\sigma(J_{\nu})=\sigma_{\p}(J_{\nu})\cup\sigma_{\ac}(J_{\nu})$, where 
 \[
 \sigma_{\ac}(J_{\nu})=\bigg[\frac{1}{4},\infty\bigg) 
 \quad\mbox{ and }\quad 
  \sigma_{\p}(J_{\nu})=\left\{\frac{1}{4}-x^{2} \;\bigg|\; \chi(x;\nu)=0, \; x>0\right\}\!.
 \]
 Moreover, $\sigma_{\p}(J_{\nu})$ is finite (possibly empty).
\end{cor}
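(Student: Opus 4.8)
The plan is to read off every statement directly from the spectral measure $\mu_{\nu}$ determined in Proposition~\ref{prop:spectral_meas_J_nu}, exploiting the fact that for a self-adjoint Jacobi operator the vector $e_{0}$ is cyclic. Concretely, I would first recall that cyclicity of $e_{0}$ makes $J_{\nu}$ unitarily equivalent to the operator of multiplication by the independent variable on $L^{2}(\R,\dd\mu_{\nu})$. This single observation delivers three things at once: the spectrum is automatically simple; $\sigma(J_{\nu})=\supp\mu_{\nu}$; and the splitting of $J_{\nu}$ into spectral types mirrors the Lebesgue decomposition of $\mu_{\nu}$.

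Next I would invoke Proposition~\ref{prop:spectral_meas_J_nu}, which presents $\mu_{\nu}=\mu_{\nu}^{d}+\mu_{\nu}^{ac}$ with no singular continuous component. The absolutely continuous part is supported on $[1/4,\infty)$ with a density that is strictly positive there, so its essential support is the whole interval and $\sigma_{\ac}(J_{\nu})=[1/4,\infty)$. The discrete part is supported on $\{1/4-x^{2}\mid\chi(x;\nu)=0,\ x>0\}$, which, together with Proposition~\ref{prop:point_spec_J_nu}, identifies $\sigma_{\p}(J_{\nu})$. Since $\mu_{\nu}$ carries no singular continuous mass, the union of these two supports exhausts $\supp\mu_{\nu}$, giving $\sigma(J_{\nu})=\sigma_{\p}(J_{\nu})\cup\sigma_{\ac}(J_{\nu})$.

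The only genuinely new point, and the main obstacle, is the finiteness of $\sigma_{\p}(J_{\nu})$. I would combine the semi-boundedness $J_{\nu}\geq\min(0,\nu)$ from Lemma~\ref{lem:J_positive} with the analyticity of $\chi(\,\cdot\,;\nu)$. Any eigenvalue $1/4-x^{2}$ with $x>0$ must obey $1/4-x^{2}\geq\min(0,\nu)$, so the admissible zeros $x$ of $\chi(\,\cdot\,;\nu)$ are confined to the bounded interval $(0,\sqrt{1/4-\min(0,\nu)}\,]$. As $\chi(\,\cdot\,;\nu)$ is entire in $z$ (for instance by~\eqref{eq:chi_simple_hyp_form1}) and not identically zero — recall $\chi(1/2;\nu)=1$ from the proof of Proposition~\ref{prop:point_spec_J_nu} — its zeros are isolated and hence finite in number on any compact set. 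Therefore $\sigma_{\p}(J_{\nu})$ is finite, possibly empty.

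A mild subtlety worth flagging is the potential accumulation of eigenvalues at the edge $1/4$ of the essential spectrum, corresponding to zeros $x\to0^{+}$. This is subsumed by the same analyticity argument: an accumulation of zeros of $\chi(\,\cdot\,;\nu)$ at $x=0$ would, by the identity theorem, force $\chi(\,\cdot\,;\nu)\equiv0$, contradicting $\chi(1/2;\nu)=1$. Hence no accumulation occurs and the compactness-plus-analyticity argument closes the proof.
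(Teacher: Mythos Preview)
Your proposal is correct and follows essentially the same route as the paper: the decomposition of the spectrum and the identification of $\sigma_{\ac}(J_{\nu})$ and $\sigma_{\p}(J_{\nu})$ are read off from Proposition~\ref{prop:spectral_meas_J_nu}, and finiteness of $\sigma_{\p}(J_{\nu})$ is obtained by combining the semi-bound from Lemma~\ref{lem:J_positive} with analyticity and non-triviality of $\chi(\,\cdot\,;\nu)$. One small remark: the reference to~\eqref{eq:chi_simple_hyp_form1} for the entireness of $\chi$ is imprecise since that formula is established only for $\nu>0$; the analyticity of $\chi(\,\cdot\,;\nu)$ in $z\in\C$ for all $\nu\in\R\setminus(-\N_{0})$ follows directly from its definition in Proposition~\ref{prop:phi_fund_proper}.
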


\begin{proof}
 The only claim to be proven is the finiteness of $\sigma_{\p}(J_{\nu})$. By Proposition~\ref{lem:J_positive}, $J_{\nu}\geq\min(0,\nu)$. Hence $\sigma_{p}(J_{\nu})\subset[\min(0,\nu),1/4)$. This means that the positive zeros of $\chi(\,\cdot\,;\nu)$, which determine eigenvalues of $J_{\nu}$, are located in the compact interval $[0,\sqrt{1/4-\min(0,\nu)}]$. Since $\chi(\,\cdot\,;\nu)$ is a non-trivial analytic function the set of these zeros has to be finite. 
\end{proof}

\subsection{The point spectrum of $J_{\nu}$ in more detail for $\nu>0$}
\label{subsec:point_spectrum_J_nu_nu>0}

As our initial motivation is to determine $\|L_{\nu}\|$, for $\nu>0$, we investigate more closely the point spectrum of $J_{\nu}$. First, we prove an auxiliary result on monotonicity properties of function $\chi(x;\nu)$ in both variables, which will be needed below.

\begin{lem}\label{lem:monotonicity_prop_aux}
 The following properties hold:
 \begin{enumerate}[{\upshape i)}]
  \item The function
  \[
   \nu\mapsto\frac{\Gamma(x+\nu+1/2)}{\Gamma(\nu+1)}\chi(x;\nu)
  \]
  is strictly increasing on $(0,\infty)$, if $x\in[0,1/2)$, and strictly decreasing on $(0,\infty)$, if $x\in(1/2,\infty]$. 
  \item If $x\geq 0, \nu>0$, and $x+\nu\geq 1/2$, then $\chi(x;\nu)>0$. In particular, $\chi(\,\cdot\,;\nu)$ has no positive zero, if $\nu\geq1/2$.
  \item For all $\nu\in(0,1/2]$, the function 
  \[  
  x\mapsto \frac{\Gamma(x+1/2)\Gamma^{2}(x+\nu+1/2)}{x+1/2}\,\chi(x;\nu)
  \]  
  is strictly increasing on $[0,1/2]$.
 \end{enumerate}
\end{lem}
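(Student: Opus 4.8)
The plan is to prove each of the three claims by choosing, from Proposition~\ref{prop:chi_expressions}, the hypergeometric representation of $\chi(x;\nu)$ in which the relevant variable enters most transparently, and then to argue monotonicity or positivity \emph{term by term} in the resulting ${}_{3}F_{2}$-series. Once the Gamma-function prefactors prescribed in the statement are cleared, what remains is a series whose Pochhammer sign pattern and whose monotone dependence on the free variable can be read off directly. For claim~(i) I would start from~\eqref{eq:chi_simple_hyp_form1}, which after inserting~\eqref{eq:def_3F2} gives
\[
 \frac{\Gamma(x+\nu+1/2)}{\Gamma(\nu+1)}\,\chi(x;\nu)=\frac{x+1/2}{\Gamma(2x+1)}\sum_{k=0}^{\infty}\frac{(x-1/2)_{k}(x+1/2)_{k}(x+3/2)_{k}}{k!\,(2x+1)_{k}(x+\nu+1/2)_{k}}.
\]
The sole $\nu$-dependence sits in $1/(x+\nu+1/2)_{k}$, which is positive and strictly decreasing in $\nu$ for each $k\geq1$, while the sign of the remaining numerator is governed by $(x-1/2)_{k}$: for $x\in[0,1/2)$ its first factor is negative and the rest positive, so $(x-1/2)_{k}<0$ for $k\geq1$, whereas for $x>1/2$ one has $(x-1/2)_{k}>0$. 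Thus on $[0,1/2)$ each $k\geq1$ term is a negative constant times a decreasing positive function of $\nu$, hence strictly increasing in $\nu$, and on $(1/2,\infty)$ each such term is strictly decreasing; adding the constant $k=0$ term yields the stated monotonicity, the endpoint $x=\infty$ following by a limiting argument.

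For claim~(ii) I would read the sign off from~\eqref{eq:chi_simple_hyp_form3}, which writes $\chi(x;\nu)$ as a positive prefactor times $\regpFq{3}{2}{\nu,\nu,x+1/2}{x+\nu-1/2,x+\nu+3/2}{1}$. In the entire form of the regularized series, $\sum_{k}\frac{(\nu)_{k}^{2}(x+1/2)_{k}}{k!\,\Gamma(x+\nu-1/2+k)\Gamma(x+\nu+3/2+k)}$, every numerator factor is positive for $\nu>0$, $x\geq0$, and the hypothesis $x+\nu\geq1/2$ forces each argument $x+\nu-1/2+k\geq0$, so every reciprocal Gamma factor is non-negative (the $k=0$ term merely vanishing when $x+\nu=1/2$). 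As the $k=1$ term is strictly positive, $\chi(x;\nu)>0$; the assertion for $\nu\geq1/2$ is the special instance $x+\nu\geq\nu\geq1/2$.

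For claim~(iii) I would use~\eqref{eq:chi_simple_hyp_form2}. Multiplying by the prescribed factor and absorbing $\Gamma^{2}(x+\nu+1/2)$ into the regularized series collapses the target to the constant $\Gamma(\nu)\Gamma(\nu+1)$ times the ordinary series
\[
 \pFq{3}{2}{\nu-1,\nu+1,x+1/2}{x+\nu+1/2,x+\nu+1/2}{1}=\sum_{k=0}^{\infty}\frac{(\nu-1)_{k}(\nu+1)_{k}}{k!}\,\frac{(x+1/2)_{k}}{\left[(x+\nu+1/2)_{k}\right]^{2}}.
\]
For $\nu\in(0,1/2]$ the coefficient $(\nu-1)_{k}(\nu+1)_{k}/k!$ is negative for every $k\geq1$, so strict increase of the sum in $x$ will follow once I show that the factor $g_{k}(x):=(x+1/2)_{k}/[(x+\nu+1/2)_{k}]^{2}$ is strictly decreasing on $[0,1/2]$. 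Its logarithmic derivative is $\sum_{j=0}^{k-1}\bigl[(x+1/2+j)^{-1}-2(x+\nu+1/2+j)^{-1}\bigr]$, and each bracket is $\leq0$ exactly because $\nu\leq1/2\leq x+1/2+j$, strictly so whenever $x>0$; this makes every $k\geq1$ term strictly increasing in $x$ (the $k=0$ term being constant), and the claim follows.

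The main obstacle is claim~(iii): it is the only place where the restriction $\nu\in(0,1/2]$ is genuinely used, and it enters precisely through the sharp inequality $\nu\leq1/2\leq x+1/2+j$ governing the sign of the logarithmic derivative. The borderline case $\nu=1/2$ needs extra care, since there the $j=0$ bracket vanishes at $x=0$; I would dispose of it by checking directly that $(x+1/2)/(x+\nu+1/2)^{2}$ is strictly below its value at $x=0$ for every $x\in(0,1/2]$, so that the $k=1$ term---and hence the whole sum---remains strictly increasing across the closed interval.
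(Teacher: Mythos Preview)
Your proposal is correct and follows essentially the same route as the paper: for (i) you use~\eqref{eq:chi_simple_hyp_form1} and argue term by term via the sign of $(x-1/2)_{k}$ and the monotonicity of $1/(x+\nu+1/2)_{k}$; for (ii) you use~\eqref{eq:chi_simple_hyp_form3} and the positivity of each term in the regularized series; for (iii) you use~\eqref{eq:chi_simple_hyp_form2}, the sign of $(\nu-1)_{k}$, and the decrease of $(x+1/2)_{k}/[(x+\nu+1/2)_{k}]^{2}$, just as the paper does (the paper computes $\partial_{x}$ of each factor $g_{j}$ directly rather than the logarithmic derivative of the product, but this is the same computation). Your extra caution at $\nu=1/2$, $x=0$ is unnecessary: the logarithmic derivative vanishes only at the single point $x=0$, which already suffices for strict monotonicity on the closed interval.
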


\begin{proof}
 i) According to~\eqref{eq:chi_simple_hyp_form1}, we have
 \[
  \frac{\Gamma(x+\nu+1/2)}{\Gamma(\nu+1)}\chi(x;\nu)=\frac{x+1/2}{\Gamma(2x+1)}\,\pFq{3}{2}{x-1/2,x+1/2,x+3/2}{2x+1,x+\nu+1/2}{1},
  \]
  for all $x\geq0$ and $\nu>0$. Hence it suffices to prove that the function
  \[
   \nu \mapsto \pFq{3}{2}{x-1/2,x+1/2,x+3/2}{2x+1,x+\nu+1/2}{1}
  \]
  has the monotonicity property of claim 1).
  By definition~\eqref{eq:def_3F2}, we have the series representation
  \[
  \pFq{3}{2}{x-1/2,x+1/2,x+3/2}{2x+1,x+\nu+1/2}{1}\!=1+\sum_{k=1}^{\infty}\frac{(x-1/2)_{k}(x+1/2)_{k}(x+3/2)_{k}}{k!\,(2x+1)_{k}}\frac{1}{(x+\nu+1/2)_{k}}.
  \]
  Now, it suffices to note that, for all $k\in\N$ and all $x\geq0$, the function 
  \[
  \nu\mapsto\frac{1}{(x+\nu+1/2)_{k}}
  \]
  is strictly decreasing on $(0,\infty)$ while, for all $k\in\N$, the term 
  \[
  \frac{(x-1/2)_{k}(x+1/2)_{k}(x+3/2)_{k}}{k!\,(2x+1)_{k}}
  \]
  is negative, if $x\in[0,1/2)$, and positive, if $x>1/2$.
  
  ii) By inspection of the expression 
  \[
   \chi(x;\nu)=\frac{(x+1/2)\Gamma(\nu)\Gamma(\nu+1)}{\Gamma(x+1/2)}\sum_{k=0}^{\infty}\frac{(\nu)_{k}^{2}(x+1/2)_{k}}{k!\,\Gamma(x+\nu-1/2+k)\Gamma(x+\nu+3/2+k)},
  \]
  which follows from~\eqref{eq:chi_simple_hyp_form3}, we see that each term is positive provided that  $x\geq0$, $\nu>0$ and $x+\nu>1/2$. In the limiting case $x+\nu=1/2$, the first term of the sum vanishes while the remaining terms remain all positive.
  
  iii) By making use of formula~\eqref{eq:chi_simple_hyp_form2}, we obtain 
  \[
   \frac{\Gamma(x+1/2)\Gamma^{2}(x+\nu+1/2)}{x+1/2}\,\chi(x;\nu)=\Gamma(\nu)\Gamma(\nu+1)\sum_{k=0}^{\infty}\frac{(\nu-1)_{k}(\nu+1)_{k}}{k!}\prod_{j=0}^{k-1}g_{j}(x;\nu),
  \]
  where
  \[
   g_{j}(x;\nu):=\frac{x+j+1/2}{(x+\nu+j+1/2)^{2}}.
  \]
   Since, for all $j\in\N_{0}$, $\nu\in(0,1/2]$, and $x>0$, we have
  \[
   \frac{\partial g_{j}}{\partial x}(x;\nu)=-\frac{x-\nu+j+1/2}{(x+\nu+j+1/2)^{3}}\leq-\frac{x}{(x+\nu+j+1/2)^{3}}<0,
  \]
  functions $g_{j}(\,\cdot\,;\nu)$ are strictly decreasing for all $j\in\N_{0}$. Noting also that $(\nu-1)_{k}<0$ for all $k\in\N$ and $\nu\in(0,1)$, we obtain the claim.
\end{proof}

Now, we are in position to prove that, if $\nu>0$, the point spectrum of $J_{\nu}$ is either empty or a~one-point set.

\begin{thm}\label{thm:zeros_nu0_and_x0} The following claims hold:
\begin{enumerate}[1)]
\item The function
\[
 \nu\mapsto\pFq{3}{2}{-1/2,1/2,3/2}{1,\nu+1/2}{1}
\]
has a unique positive zero $\nu_{0}$ which is located in $(0,1/2)$. 
\item 
We have
\[
 \sigma_{p}(J_{\nu})=\begin{cases}
  \emptyset,& \mbox{ if } \nu\geq\nu_{0},\\
  1/4-x_{0}^{2}(\nu),& \mbox{ if } 0<\nu<\nu_{0},
  \end{cases}
\]
where $x_{0}(\nu)$ is the unique zero of the function
\[
x\mapsto\pFq{3}{2}{x-1/2,x+1/2,x+3/2}{2x+1,x+\nu+1/2}{1}
\]
located in $(0,1/2)$.
\item Function $x_{0}:(0,\nu_{0})\to(0,1/2):\nu\mapsto x_{0}(\nu)$ is real analytic and strictly decreasing.
\end{enumerate}
\end{thm}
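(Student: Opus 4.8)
The plan is to reduce all three claims to the sign and the zeros of the characteristic function $\chi(\,\cdot\,;\nu)$. Recall from Proposition~\ref{prop:point_spec_J_nu} and Corollary~\ref{cor:spectrum_J_nu} that $\sigma_{\p}(J_{\nu})=\{1/4-x^{2}\mid \chi(x;\nu)=0,\ x>0\}$, and that by~\eqref{eq:chi_simple_hyp_form1} one has, for $x>0$ and $\nu>0$, a factorization of $\chi(x;\nu)$ into a strictly positive prefactor times the very ${}_{3}F_{2}$ appearing in claim~2); in particular the positive zeros of $\chi(\,\cdot\,;\nu)$ coincide with the zeros in $(0,\infty)$ of that hypergeometric function. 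Evaluating~\eqref{eq:chi_simple_hyp_form1} at $x=0$ shows moreover that the function $h(\nu):=\pFq{3}{2}{-1/2,1/2,3/2}{1,\nu+1/2}{1}$ from claim~1) satisfies $h(\nu)=\tfrac{2\Gamma(\nu+1/2)}{\Gamma(\nu+1)}\chi(0;\nu)$, so $h$ and $\chi(0;\,\cdot\,)$ share their signs and zeros. Thus the whole theorem is driven by the monotonicity and positivity facts recorded in Lemma~\ref{lem:monotonicity_prop_aux}.

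For claim~1) I would argue that $h$ is strictly increasing on $(0,\infty)$: this is exactly claim~i) of Lemma~\ref{lem:monotonicity_prop_aux} with $x=0$, which says that $\nu\mapsto\frac{\Gamma(\nu+1/2)}{\Gamma(\nu+1)}\chi(0;\nu)=\tfrac12 h(\nu)$ is strictly increasing. Next, $h(1/2)>0$ follows from claim~ii) of Lemma~\ref{lem:monotonicity_prop_aux} (as $x+\nu=1/2$ there), while $h(\nu)\to-\infty$ as $\nu\to0^{+}$: in the series~\eqref{eq:def_3F2} every term with $k\geq1$ is negative (the factor $(-1/2)_{k}$ is negative, all other Pochhammer factors positive), and as $\nu\to0^{+}$ one approaches the boundary of the convergence region (here $b_{1}+b_{2}-a_{1}-a_{2}-a_{3}=\nu$), the general term being of order $-k^{-1-\nu}$, so the partial sums diverge to $-\infty$. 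A strictly increasing continuous function running from $-\infty$ to a positive value on $(0,1/2]$ has a unique zero $\nu_{0}\in(0,1/2)$, and monotonicity forbids any zero in $[1/2,\infty)$.

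For claim~2) the localization step comes from claim~ii) of Lemma~\ref{lem:monotonicity_prop_aux}: since $\chi(x;\nu)>0$ whenever $x+\nu\geq1/2$, any positive zero must lie in $(0,1/2-\nu)\subset(0,1/2)$, and there are none at all once $\nu\geq1/2$. On the compact window $[0,1/2]$, claim~iii) of Lemma~\ref{lem:monotonicity_prop_aux} states that $\chi(\,\cdot\,;\nu)$ agrees, up to a strictly positive factor, with a strictly increasing function, so it has at most one zero there and changes sign from $-$ to $+$ at it. Feeding in the endpoint values $\chi(1/2;\nu)=1>0$ and $\sign\chi(0;\nu)=\sign h(\nu)$ then finishes: for $\nu\geq\nu_{0}$ one has $\chi(0;\nu)\geq0$, hence $\chi>0$ throughout $(0,1/2]$ and, with Lemma~\ref{lem:monotonicity_prop_aux}~ii) for $x>1/2$, $\sigma_{\p}(J_{\nu})=\emptyset$; for $0<\nu<\nu_{0}$ one has $\chi(0;\nu)<0<\chi(1/2;\nu)$, and the intermediate value theorem produces exactly one zero $x_{0}(\nu)\in(0,1/2)$, i.e.\ the single eigenvalue $1/4-x_{0}^{2}(\nu)$.

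Claim~3) I would obtain from the implicit function theorem applied to $\chi(x;\nu)=0$ at the points $(x_{0}(\nu),\nu)$; $\chi$ is jointly real analytic there, so the only thing to check is that both partials are nonzero with definite signs, and I expect this to be the main obstacle. I would handle it by term-by-term differentiation of the explicit series for $\chi$. Differentiating~\eqref{eq:chi_simple_hyp_form1} in $\nu$: for $x\in(0,1/2)$ every coefficient $(x-1/2)_{k}(x+1/2)_{k}(x+3/2)_{k}/(k!\,(2x+1)_{k})$ is negative while $\nu\mapsto1/(x+\nu+1/2)_{k}$ is strictly decreasing, so each differentiated term is strictly positive; since the boundary factor drops out at $\chi(x_{0};\nu)=0$, this gives $\partial_{\nu}\chi(x_{0};\nu)>0$. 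Differentiating the series behind claim~iii) of Lemma~\ref{lem:monotonicity_prop_aux} in the same spirit gives $\partial_{x}\chi(x_{0};\nu)>0$, consistent with the sign change found in claim~2) (alternatively, nonvanishing of $\partial_{x}\chi(x_{0};\nu)$ follows because the atom mass $\phi_{0}(x_{0};\nu)/\partial_{x}\chi(x_{0};\nu)$ in Proposition~\ref{prop:spectral_meas_J_nu} is a genuine positive mass). The implicit function theorem then makes $x_{0}$ real analytic on $(0,\nu_{0})$, and the quotient $x_{0}'(\nu)=-\partial_{\nu}\chi(x_{0};\nu)/\partial_{x}\chi(x_{0};\nu)<0$ yields strict monotonicity.
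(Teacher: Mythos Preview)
Your proposal is correct and follows essentially the same route as the paper: both reduce everything to Lemma~\ref{lem:monotonicity_prop_aux}, using part~(i) for the monotonicity of $h(\nu)$ in claim~1), part~(ii) to localize possible zeros to $(0,1/2)$, part~(iii) for uniqueness of $x_{0}(\nu)$, and the implicit function theorem for claim~3). The only cosmetic differences are that the paper extracts $h(\nu)\to-\infty$ from formula~\eqref{eq:chi_simple_hyp_form3} rather than your direct series-divergence argument, applies the implicit function theorem to the monotone auxiliary function $s(x;\nu)=\tfrac{\Gamma(x+1/2)\Gamma^{2}(x+\nu+1/2)}{x+1/2}\chi(x;\nu)$ rather than to $\chi$ itself (which slightly streamlines the sign checks on the partials), and additionally verifies the limits $x_{0}(\nu)\to1/2$ as $\nu\to0^{+}$ and $x_{0}(\nu)\to0$ as $\nu\to\nu_{0}^{-}$, establishing that $x_{0}$ maps onto $(0,1/2)$.
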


\begin{rem}
 Numerically, one has $\nu_{0}\approx0.349086$.
\end{rem}

\begin{proof}[Proof of Theorem~\ref{thm:zeros_nu0_and_x0}]
1) It follows from~\eqref{eq:chi_simple_hyp_form1} that 
\[
 g(\nu):=\pFq{3}{2}{-1/2,1/2,3/2}{1,\nu+1/2}{1}=2\frac{\Gamma(\nu+1/2)}{\Gamma(\nu+1)}\chi(0;\nu), 
\]
for $\nu>0$. According to claim~(i) of Lemma~\ref{lem:monotonicity_prop_aux}, $g$ is strictly increasing on $(0,\infty)$. Further, one computes readily from~\eqref{eq:chi_simple_hyp_form3} that 
 \[
 \lim_{\nu\to0}\nu\,\chi(0;\nu)=\frac{1}{\Gamma^{2}(1/2)\Gamma(-1/2)}=-\frac{1}{2\pi^{3/2}}<0,
 \]
 which means that $\lim_{\nu\to0+}g(\nu)=-\infty$. By using formula~\eqref{eq:chi_simple_hyp_form3} again, one gets
 \[
  g\!\left(\frac{1}{2}\right)=\regpFq{3}{2}{1/2,1/2,1/2}{0,2}{1}=\sum_{k=1}^{\infty}\frac{(1/2)_{k}^{3}}{(k-1)!\,k!\,(k+1)!}>0.
 \]
 These properties imply that $g$ has a unique positive zero $\nu_{0}$, which is located in $(0,1/2)$.
 
 2) For $x,\nu>0$, we temporarily denote
 \[
 r(x;\nu):=\pFq{3}{2}{x-1/2,x+1/2,x+3/2}{2x+1,x+\nu+1/2}{1}
 \]
 and
 \[
 s(x;\nu):=\frac{\Gamma(x+1/2)\Gamma^{2}(x+\nu+1/2)}{x+1/2}\,\chi(x;\nu).
 \]
 In view of~\eqref{eq:chi_simple_hyp_form1}, we have
 \[
 s(x;\nu)=\frac{\Gamma(\nu+1)\Gamma(x+1/2)\Gamma(x+\nu+1/2)}{\Gamma(2x+1)}r(x;\nu).
 \]
 Thus, for $\nu>0$, the set of positive zeros of functions $r(\,\cdot\,;\nu)$, $s(\,\cdot\,;\nu)$, and $\chi(\,\cdot\,;\nu)$ coincide.
 It follows from claim~(ii) of Lemma~\ref{lem:monotonicity_prop_aux} that, if $\nu\geq1/2$, $r(\,\cdot\,;\nu)$ has no positive zero.

Next, suppose $\nu\in(0,1/2)$. According to claim~(iii) of Lemma~\ref{lem:monotonicity_prop_aux}, function $s(\,\cdot\,;\nu)$ is strictly increasing on $[0,1/2]$. Further, observe that 
\[
 s(0;\nu)=\Gamma(1/2)\Gamma(\nu+1)\Gamma(\nu+1/2)g(\nu)
 \quad\mbox{ and }\quad 
 s\left(\frac{1}{2};\nu\right)=\Gamma^{2}(\nu+1).
\]
Consequently, $s(\,\cdot\,;\nu)$ has a unique zero $x_{0}(\nu)\in(0,1/2)$ if and only if $g(\nu)<0$, which is further equivalent to the condition $\nu<\nu_{0}$. Otherwise, if $\nu\geq\nu_{0}$, $s(x;\nu)>0$ for all $x\in(0,1/2]$. This proves the assertion concerning zeros of $r(\,\cdot\,;\nu)$ from the statement. The claim on the point spectrum of $J_{\nu}$ now follows from Proposition~\ref{prop:point_spec_J_nu}.

3) The zero $x_{0}$ depends analytically on $\nu$ by the analytic Implicit Function Theorem. Next, by differentiating the implicit equation
\[
 s(x_{0}(\nu);\nu)=0
\]
with respect to $\nu$, we get
\begin{equation}
 \frac{\partial s}{\partial x}(x_{0}(\nu);\nu)\,x_{0}'(\nu)+\frac{\partial s}{\partial \nu}(x_{0}(\nu);\nu)=0
\label{eq:impl_func_der_x_0}
\end{equation}
for all $\nu\in(0,\nu_{0})$. As we know from claim~(iii) of Lemma~\ref{lem:monotonicity_prop_aux},
\[
\frac{\partial s}{\partial x}(x_{0}(\nu);\nu)>0.
\]
Similarly, by claim~(i) of Lemma~\ref{lem:monotonicity_prop_aux}, we have
\[
 \frac{\partial s}{\partial \nu}(x_{0}(\nu);\nu)=\frac{\Gamma(\nu+1)\Gamma(x+1/2)\Gamma(x+\nu+1/2)}{x+1/2}\,\frac{\partial}{\partial\nu}\bigg|_{x=x_{0}(\nu)}\!\frac{\Gamma(x+\nu+1/2)}{\Gamma(\nu+1)}\chi(x;\nu)>0.
\]
Thus, it follows from~\eqref{eq:impl_func_der_x_0} that $x'(\nu)<0$ for all $\nu\in(0,\nu_{0})$. In other words, $x_{0}$ is strictly decreasing on $(0,\nu_{0})$.

To conclude that $x_{0}$ maps $(0,\nu_{0})$ onto $(0,1/2)$, it suffices to check that the limit values
\[
 \lim_{\nu\to 0+}x_{0}(\nu)=\frac{1}{2} \quad\mbox{ and }\quad
 \lim_{\nu\to\nu_{0}-}x_{0}(\nu)=0.
\]
The latter limit relation follows from the proof of claim 2), where we have observed that $s(0;\nu_{0})=0$ and $s(x;\nu_{0})>0$ for all $x>0$. Next, the already proven claim 2) together with the min-max principle implies the estimate
\begin{equation}
 \frac{1}{4}-x_{0}^{2}(\nu)=\inf_{\substack{\phi\in\Dom J_{\nu} \\ \|\phi\|=1}}\langle \phi,J_{\nu}\phi\rangle\leq\langle e_{0},J_{\nu}e_{0}\rangle=\nu(\nu+1),
\label{eq:min-max_x0_inproof}
\end{equation}
for $\nu\in(0,\nu_{0})$. Thus, we have
\[
 \sqrt{\frac{1}{4}-\nu(\nu+1)}\leq x_{0}(\nu)\leq\frac{1}{2},
\]
for $\nu\in(0,\nu_{0})$, which implies that $x_{0}(\nu)\to1/2$, as $\nu\to0+$.
\end{proof}

Next, we show that the bound~\eqref{eq:min-max_x0_inproof} on the bottom of the spectrum of $J_{\nu}$ can be improved when a different choice of the test sequence for the min-max principle is made. The result actually holds true for all $\nu\in\R\setminus(-\N_{0})$.

\begin{prop}\label{prop:bounds_inf_spec_J_nu}
 For $\nu\in\R\setminus(-\N_{0})$, one has
 \[
  \min(0,\nu)\leq\inf\sigma(J_{\nu})\leq\min\left(\frac{1}{4},\frac{1}{\nu\,\psi'(\nu)}\right),
 \]
 where $\psi=\Gamma/\Gamma'$ is the Digamma function.
\end{prop}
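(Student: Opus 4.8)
The lower bound is already in hand: Lemma~\ref{lem:J_positive} gives $J_{\nu}\geq\min(0,\nu)$, whence $\inf\sigma(J_{\nu})\geq\min(0,\nu)$. For the upper bound I would treat the two competing quantities separately. That $\inf\sigma(J_{\nu})\leq 1/4$ is immediate from Corollary~\ref{cor:spectrum_J_nu}, since $\sigma_{\ac}(J_{\nu})=[1/4,\infty)\subset\sigma(J_{\nu})$ forces $1/4\in\sigma(J_{\nu})$. The substantive part is the bound $\inf\sigma(J_{\nu})\leq 1/(\nu\,\psi'(\nu))$, which I would obtain from the min-max principle by exhibiting a single well-chosen test vector.

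The choice of test vector is dictated by the quadratic form in Lemma~\ref{lem:J_positive}: apart from the boundary term $\nu|\phi_{0}|^{2}$, the form is a sum of squares of the differences $(n+\nu)\phi_{n}-(n-1+\nu)\phi_{n-1}$. I would make every such difference vanish by solving the first-order recurrence $(n+\nu)\phi_{n}=(n-1+\nu)\phi_{n-1}$, which telescopes to
\[
 \phi_{n}=\frac{\nu}{n+\nu}, \quad n\in\N_{0},
\]
normalised so that $\phi_{0}=1$; equivalently, $\phi=\nu\,L_{\nu}e_{0}$ is a multiple of the zeroth column of the Hilbert $L$-matrix. A direct telescoping computation shows $\mathcal{J}\cdot\phi=\nu e_{0}$, so that $\phi\in\ell^{2}(\N_{0})$ (indeed $\phi_{n}=O(1/n)$) lies in $\Dom J_{\nu}$ (recall $J_{\nu}$ is proper) with $J_{\nu}\phi=\nu e_{0}$.

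With this $\phi$ the two relevant quantities are then elementary. Since $J_{\nu}\phi=\nu e_{0}$, one has $\langle\phi,J_{\nu}\phi\rangle=\nu\,\overline{\phi_{0}}=\nu$; consistently, all difference terms in Lemma~\ref{lem:J_positive} drop out and only $\nu|\phi_{0}|^{2}=\nu$ survives. For the norm I would recognise the trigamma series
\[
 \|\phi\|^{2}=\sum_{n=0}^{\infty}\frac{\nu^{2}}{(n+\nu)^{2}}=\nu^{2}\,\psi'(\nu).
\]
The min-max principle then gives $\inf\sigma(J_{\nu})\leq\langle\phi,J_{\nu}\phi\rangle/\|\phi\|^{2}=1/(\nu\,\psi'(\nu))$, and combining with $\inf\sigma(J_{\nu})\leq 1/4$ completes the upper bound. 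The argument is uniform in $\nu\in\R\setminus(-\N_{0})$; note that $\psi'(\nu)>0$ always, so the sign of the bound simply tracks the sign of $\nu$, which is consistent with the appearance of a negative eigenvalue when $\nu<0$.

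I do not expect a genuine obstacle here: the only real step is guessing the test sequence, and the telescoping structure of the quadratic form makes $\phi_{n}=\nu/(n+\nu)$ essentially forced. The one point requiring minor care is that Lemma~\ref{lem:J_positive} is stated for finitely supported vectors whereas this $\phi$ is not; I would sidestep the attendant density argument by evaluating $\langle\phi,J_{\nu}\phi\rangle$ directly from the identity $J_{\nu}\phi=\nu e_{0}$.
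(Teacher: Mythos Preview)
Your proof is correct and follows essentially the same route as the paper: the lower bound from Lemma~\ref{lem:J_positive}, the bound by $1/4$ from Corollary~\ref{cor:spectrum_J_nu}, and the trigamma bound via the min-max principle with the test vector $\phi_{n}=\nu/(n+\nu)$, which is just a scalar multiple of the paper's choice $\phi_{n}^{*}=1/(n+\nu)$ and hence gives the identical Rayleigh quotient. Your additional motivation for the test vector---making the difference terms in the quadratic form vanish---and the observation that it equals $\nu L_{\nu}e_{0}$ are not in the paper but are a nice touch.
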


\begin{proof}
 The lower bound $\min(0,\nu)\leq\inf\sigma(J_{\nu})$ follows immediately from Lemma~\ref{lem:J_positive}. Further, since $\sigma_{\ac}(J_{\nu})=[1/4,\infty)$ by Corollary~\ref{cor:spectrum_J_nu}, $\inf\sigma(J_{\nu})\leq1/4$. Thus, it suffices to prove the upper bound $\inf\sigma(J_{\nu})\leq1/\left(\nu\,\psi'(\nu)\right)$.
 
By the min-max principle,
 \[
  \inf\sigma(J_{\nu})=\inf_{0\neq\phi\in\Dom J_{\nu}}\frac{\langle\phi,J_{\nu}\phi\rangle}{\|\phi\|^{2}}.
 \]
 We choose the test sequence $\phi^{*}$ with entries
 \[
 \phi_{n}^{*}:=\frac{1}{n+\nu}, \quad n\in\N_{0}.
 \]
 Then $J_{\nu}\phi^{*}=e_{0}$, hence $\langle\phi^{*},J_{\nu}\phi^{*}\rangle=1/\nu$. Further, we have
 \[
  \|\phi^{*}\|^{2}=\sum_{n=0}^{\infty}\frac{1}{(n+\nu)^{2}}=\psi'(\nu),
 \]
 where we used the well-known identity~\cite[Eq.~5.15.1]{dlmf}. The desired estimate now follows from the min-max principle.
\end{proof}

As we know from claim 3) of Theorem~\ref{thm:zeros_nu0_and_x0}, $x_{0}(\nu)\to1/2$, as $\nu\to0+$, which is reasonable since $0\in\sigma_{\p}(J_{0})$. However, a more detailed information on the asymptotic behavior of $x_{0}(\nu)$, for $\nu\to0+$, can be computed.

\begin{prop}\label{prop:asympt_x_0}
For $\nu\to0+$, we have the asymptotic expansion
\begin{equation}
 x_{0}(\nu)=\frac{1}{2}-\nu-\nu^{2}-\left(2-\frac{\pi^{2}}{6}\right)\nu^{3}-\left(5-\frac{\pi^{2}}{3}-\zeta(3)\right)\nu^{4}+O(\nu^{5}),
 \label{eq:asympt_x_0}
\end{equation}
where $x_{0}(\nu)$ is as in Theorem~\ref{thm:zeros_nu0_and_x0} and $\zeta(3)$ is Ap{\' e}ry's constant.
\end{prop}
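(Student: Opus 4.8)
The plan is to reduce the defining equation to a hypergeometric series that converges well near $\nu=0$ and is jointly analytic there, and then expand by the implicit function theorem. Among the three representations of $\chi(\,\cdot\,;\nu)$ in Proposition~\ref{prop:chi_expressions}, the form~\eqref{eq:chi_simple_hyp_form2} is the convenient one: its prefactor $(z+1/2)\Gamma(\nu)\Gamma(\nu+1)/\Gamma(z+1/2)$ is nonvanishing for $\nu>0$, and both bottom parameters equal $z+\nu+1/2$, which for $(x,\nu)$ near $(1/2,0)$ keeps the denominator Pochhammer symbols away from zero. Writing
\[
 R(x;\nu):=\sum_{k=0}^{\infty}\frac{(\nu-1)_{k}(\nu+1)_{k}(x+1/2)_{k}}{k!\,(x+\nu+1/2)_{k}^{2}},
\]
for $\nu>0$ the zero $x_{0}(\nu)$ of $\chi(\,\cdot\,;\nu)$ in $(0,1/2)$ coincides with the zero of $R(\,\cdot\,;\nu)$. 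A term-ratio (Gauss/Raabe) estimate shows the series has parameter excess $x+1/2>0$ near the point of interest, so it converges absolutely and locally uniformly; hence $R$ is jointly analytic in a neighborhood of $(x,\nu)=(1/2,0)$.

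First I would record the two values that pin down the expansion. Evaluating at $x=1/2$ gives $(x+1/2)_{k}=k!$ and $(x+\nu+1/2)_{k}=(1+\nu)_{k}$, so that $R(1/2;\nu)=\sum_{k}(\nu-1)_{k}/(\nu+1)_{k}$; using the identity $(\nu-1)_{k}/(\nu+1)_{k}=\nu(\nu-1)/[(\nu+k-1)(\nu+k)]$, this telescopes to $R(1/2;\nu)=\nu$, so in particular $R(1/2;0)=0$. Differentiating the series in $x$ and evaluating at $x=1/2$ (using $\partial_{x}(x+1/2)_{k}=(x+1/2)_{k}[\psi(x+1/2+k)-\psi(x+1/2)]$ and the analogous formula for the denominator) yields $\partial_{x}R(1/2;0)=1$. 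By the analytic implicit function theorem there is a unique analytic branch $\nu\mapsto x_{0}(\nu)$ with $x_{0}(0)=1/2$, which, by claim~3) of Theorem~\ref{thm:zeros_nu0_and_x0} and uniqueness of the zero in $(0,1/2)$, is the function in the statement.

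Next I would substitute the ansatz $x_{0}(\nu)=1/2+c_{1}\nu+c_{2}\nu^{2}+c_{3}\nu^{3}+c_{4}\nu^{4}+O(\nu^{5})$ into $R(x_{0}(\nu);\nu)=0$, Taylor-expand in the two variables around $(1/2,0)$, and match equal powers of $\nu$. Because $R(1/2;\nu)=\nu$, the part constant in $x$ contributes only the single term $\nu$; the remaining input is the coefficients $\partial_{x}^{m}R(1/2;\nu)$, each expanded to the order in $\nu$ needed to reach $\nu^{4}$. Solving the resulting triangular system successively gives $c_{1}=-1$, then $c_{2}=-1$, and finally $c_{3}$ and $c_{4}$.

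The main obstacle is the computation of the $\nu$-expansions of $\partial_{x}^{m}R(1/2;\nu)$ to the required order. Each $x$-derivative produces bracketed differences of digamma functions at the parameters $1+k$ and $1+\nu+k$, and after inserting $(\nu-1)_{k}/(\nu+1)_{k}=\nu(\nu-1)/[(\nu+k-1)(\nu+k)]$ one must expand in $\nu$ and sum over $k$. The sums that survive at the relevant orders are $\sum_{k\geq1}k^{-2}=\pi^{2}/6$ and $\sum_{k\geq1}k^{-3}=\zeta(3)$ (equivalently $\psi'(1)=\pi^{2}/6$ and $\psi''(1)=-2\zeta(3)$), which is precisely how $\pi^{2}/6$ and Ap\'ery's constant enter $c_{3}$ and $c_{4}$. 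This bookkeeping is routine but lengthy, so I would relegate the explicit evaluation of these polygamma sums, together with the justification of term-by-term differentiation by local uniform convergence, to the appendix.
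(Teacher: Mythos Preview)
Your proposal is correct and follows essentially the same approach as the paper: expand a hypergeometric representation of $\chi$ jointly in $(x,\nu)$ near $(1/2,0)$, apply the analytic implicit function theorem, and solve for the Taylor coefficients of $x_0(\nu)$ by substitution, the resulting sums collapsing to $\zeta(2)$ and~$\zeta(3)$. The only difference is cosmetic---you start from~\eqref{eq:chi_simple_hyp_form2} and a single series $R$ (your telescoping identity $R(1/2;\nu)=\nu$ is the analogue of the paper's $h(0;\nu)=\nu$), whereas the paper starts from~\eqref{eq:chi_hyper_form}; in fact $h(y;\nu)=(y+1)^{2}R(y+1/2;\nu)$, so the two computations are term-by-term equivalent.
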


\begin{proof}
 We derive the asymptotic expansion of $x_{0}(\nu)$ up to the quadratic term $\nu^{2}$ here. The computation of the other two coefficients by $\nu^{3}$ and $\nu^{4}$ follows the same procedure but is lengthy, therefore is postponed to Appendix~\ref{subsec:a.2}.
 
In view of formula~\eqref{eq:chi_hyper_form}, $x_{0}(\nu)$ is the zero of function
\[
\left(x+\nu-\frac{1}{2}\right)^{\!2}\!\!\pFq{3}{2}{x-1/2,\nu-1,\nu-1}{\nu+x-1/2,\nu+x-1/2}{1}-\nu(\nu-1)\pFq{3}{2}{x-1/2,\nu,\nu}{\nu+x+1/2,\nu+x+1/2}{1},
\]
located in a left neighborhood of the point $1/2$ for $\nu>0$ small. We denote $y:=x-1/2$ and 
\begin{equation}
 h(y;\nu):=\left(y+\nu\right)^{2}\pFq{3}{2}{y,\nu-1,\nu-1}{\nu+y,\nu+y}{1}-\nu(\nu-1)\pFq{3}{2}{y,\nu,\nu}{\nu+y+1,\nu+y+1}{1}.
 \label{eq:def_h_inproof}
\end{equation}
Note that $h$ is analytic in both variables in a neighborhood of zero. Expression~\eqref{eq:def_h_inproof} is in a~suitable form for an asymtotic expansion for $\nu$ small. Using definition~\eqref{eq:def_3F2} and separating the first two terms of the first hypergeometric series from~\eqref{eq:def_h_inproof} and the very first term from the second hypergeometric series in~\eqref{eq:def_h_inproof}, we arrive at the expression
\begin{align}
h(y;\nu)&=(y+\nu)^{2}+y(\nu-1)^{2}+y(y+1)\nu^{2}U(y;\nu)-\nu(\nu-1)-y\nu^{3}V(y;\nu)\nonumber\\
&=y(y+1)+\nu+y\nu^{2}+y(y+1)\nu^{2}U(y;\nu)-y\nu^{3}V(y;\nu),
\label{eq:h_formula_inproof}
\end{align}
where
\begin{equation}
U(y;\nu):=\frac{(\nu-1)^{2}}{(y+\nu+1)^{2}}\sum_{k=0}^{\infty}\frac{(y+2)_{k}(\nu+1)_{k}^{2}}{(y+\nu+2)_{k}^{2}(k+2)!}
\label{eq:def_U}
\end{equation}
and
\begin{equation}
V(y;\nu):=\frac{\nu-1}{(y+\nu+1)^{2}}\sum_{k=0}^{\infty}\frac{(y+1)_{k}(\nu+1)_{k}^{2}}{(y+\nu+2)_{k}^{2}(k+1)!}.
\label{eq:def_V}
\end{equation}
It is easy to see that both $U(y;\nu)$ and $V(y;\nu)$ are uniformly bounded in $y$ and $\nu$ from a~neighborhood of zero. Writing the analytic function 
\[
 y_{0}(\nu):=x_{0}(\nu)-\frac{1}{2}=y_{1}\nu+O(\nu^{2}),
\]
one infers from~\eqref{eq:h_formula_inproof} that
\[
 0=h(y_{0}(\nu);\nu)=(y_{1}+1)\nu+O(\nu^{2}),
\]
as $\nu\to0$. Hence $y_{1}=-1$, i.e.,
\[
y_{0}(\nu)=-\nu+O(\nu^{2}), \quad \nu\to0.
\]

Repeating the same approach for $y_{0}(\nu)$ expanded up to the second term 
\[
 y_{0}(\nu)=-\nu+y_{2}\nu^{2}+O(\nu^{3}),
\]
one obtains
\[
 0=h(y_{0}(\nu);\nu)=(y_{2}+1)\nu^{2}+O(\nu^{3}), \quad \nu\to0,
\]
which yields $y_{2}=-1$. Hence 
\[
y_{0}(\nu)=-\nu-\nu^{2}+O(\nu^{3}), \quad \nu\to0,
\]
getting the first three terms from the expansion to be proved. Two more terms are computed in Appendix~\ref{subsec:a.2}.
\end{proof}

\subsection{Applications for orthogonal polynomials}
\label{subsec:ortohogonal_polynomials}

A straightforward application of Proposition~\ref{prop:spectral_meas_J_nu} yields the orthogonality measure for orthogonal polynomials $p_{n}(x;\nu)$ corresponding to $J_{\nu}$, i.e., determined recursively by
\[
 p_{n+1}(x;\nu)=\left(x-2(n+\nu)^{2}\right)p_{n}(x;\nu)-(n+\nu-1)^{2}(n+\nu)^{2}p_{n-1}(x;\nu), \quad n\in\N,
\]
and $p_{0}(x;\nu)=1$, $p_{1}(x;\nu)=x-\nu(\nu+1)$. Recall the respective orthonormal polynomials $\{P_{n}(\,\cdot\,;\nu)\}_{n=0}^{\infty}$ are determined by the generalized eigenvalue equation $J_{\nu}P(x;\nu)=xP(x;\nu)$, where $P(x;\nu)=(P_{0}(x;\nu),P_{1}(x;\nu),\dots)^{T}$, normalized by $P_{0}(x;\nu)=1$.
Polynomials $p_{n}(\,\cdot\,;\nu)$ and $P_{n}(\,\cdot\,;\nu)$ are simply related by the formula
\begin{equation}
 P_{n}(x;\nu):=\frac{(-1)^{n}}{(\nu)_{n}(\nu+1)_{n}}p_{n}(x;\nu), \quad n\in\N_{0}.
\label{eq:P_rel_p}
\end{equation}
Then the identity $e_{n}=P_{n}(J_{\nu};\nu)e_{0}$, which holds for all $n\in\N_{0}$ and can be easily verified by induction, together with the Spectral Theorem implies
\[
 \delta_{m,n}=\langle e_{m},e_{n} \rangle=\langle P_{m}(J_{\nu};\nu)e_{0},P_{n}(J_{\nu};\nu)e_{0}\rangle=\int_{\R}P_{n}(x;\nu)P_{m}(x;\nu)\, \dd\langle e_{0},E_{J_{\nu}}e_{0}\rangle,
\]
for all $m,n\in\N_{0}$. This well known fact shows that the orthogonality measure for $P_{n}(\,\cdot\,;\nu)$ coincides with the spectral measure $\mu_{\nu}=\langle e_{0},E_{J_{\nu}}e_{0}\rangle$ described in Proposition~\ref{prop:spectral_meas_J_nu}. Taking also into account formula~\eqref{eq:P_rel_p}, we obtain the orthogonality relation for polynomials $p_{n}(\,\cdot\,;\nu)$.

\begin{thm}
 For $\nu\in\R\setminus(-\N_{0})$ and $m,n\in\N_{0}$, we have the orthogonality relation
 \begin{align*}
 &\frac{1}{\pi^{2}}\int_{0}^{\infty}p_{m}\!\left(\frac{1}{4}+x^{2};\nu\right)p_{n}\!\left(\frac{1}{4}+x^{2};\nu\right)\frac{x\sinh(2\pi x)}{|\chi(\ii x;\nu)|^{2}}\,\dd x\\
 &\hskip70pt +\sum_{\substack{x_{0}>0 \\ \chi(x_{0};\nu)=0}}\frac{\phi_{0}(x_{0};\nu)}{\partial_{x}\chi(x_{0};\nu)}\,p_{m}\!\left(\frac{1}{4}-x_{0}^{2};\nu\right)p_{n}\!\left(\frac{1}{4}-x_{0}^{2};\nu\right)=(\nu)_{n}^{2}(\nu+1)_{n}^{2}\,\delta_{m,n}.
 \end{align*}
\end{thm}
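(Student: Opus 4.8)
The plan is to reduce the statement to the orthonormality of the polynomials $P_{n}(\,\cdot\,;\nu)$ with respect to the spectral measure $\mu_{\nu}$, already recorded in the discussion preceding the theorem, and then to make it explicit by inserting the decomposition of $\mu_{\nu}$ provided by Proposition~\ref{prop:spectral_meas_J_nu}.

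First I would start from the orthonormality relation
\[
 \int_{\R}P_{m}(x;\nu)P_{n}(x;\nu)\,\dd\mu_{\nu}(x)=\delta_{m,n}, \quad m,n\in\N_{0},
\]
which follows from the Spectral Theorem applied to $e_{n}=P_{n}(J_{\nu};\nu)e_{0}$. Substituting the relation~\eqref{eq:P_rel_p} between $P_{n}$ and the monic polynomials $p_{n}$ multiplies the integrand by the symmetric prefactor $(-1)^{m+n}/\left[(\nu)_{m}(\nu+1)_{m}(\nu)_{n}(\nu+1)_{n}\right]$. Since this sign equals $+1$ on the diagonal $m=n$ and both sides vanish when $m\neq n$, the relation rewrites at once as the monic orthogonality identity
\[
 \int_{\R}p_{m}(x;\nu)p_{n}(x;\nu)\,\dd\mu_{\nu}(x)=(\nu)_{n}^{2}(\nu+1)_{n}^{2}\,\delta_{m,n}.
\]

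Next I would insert $\mu_{\nu}=\mu_{\nu}^{d}+\mu_{\nu}^{ac}$ from Proposition~\ref{prop:spectral_meas_J_nu}. The discrete part $\mu_{\nu}^{d}$, supported on the atoms $1/4-x_{0}^{2}$ with weights $\phi_{0}(x_{0};\nu)/\partial_{x}\chi(x_{0};\nu)$ over the positive zeros $x_{0}$ of $\chi(\,\cdot\,;\nu)$, contributes precisely the finite sum appearing in the statement. For the absolutely continuous part, supported on $[1/4,\infty)$, I would use the substitution $\lambda=1/4+x^{2}$ with $x>0$, so that $\dd\lambda=2x\,\dd x$; multiplying the Jacobian $2x$ by the density $\sinh(2\pi x)/\left(2\pi^{2}|\chi(\ii x;\nu)|^{2}\right)$ produces exactly $x\sinh(2\pi x)/\left(\pi^{2}|\chi(\ii x;\nu)|^{2}\right)$, which is the integrand in the first term of the statement.

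There is essentially no obstacle beyond careful bookkeeping: the only points to verify are that the factor $2x$ from $\dd\lambda=2x\,\dd x$ cancels the $2$ in $2\pi^{2}$ to leave the prefactor $1/\pi^{2}$, and that the $(-1)^{m+n}$ sign introduced by~\eqref{eq:P_rel_p} is harmless. Assembling the discrete and absolutely continuous contributions then yields the claimed orthogonality relation.
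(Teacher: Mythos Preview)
Your proposal is correct and follows precisely the paper's approach: the argument is given in the paragraph preceding the theorem, where the orthonormality of $P_{n}$ with respect to $\mu_{\nu}$ is recorded via the Spectral Theorem, relation~\eqref{eq:P_rel_p} is invoked to pass to the monic $p_{n}$, and Proposition~\ref{prop:spectral_meas_J_nu} supplies the explicit form of~$\mu_{\nu}$. Your handling of the Jacobian $2x$ from $\dd\lambda=2x\,\dd x$ and the harmless sign $(-1)^{m+n}$ is exactly the bookkeeping the paper leaves implicit.
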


If we additionally suppose $\nu>0$, we have more detailed description of the discrete part of the orthogonality measure due to Theorem~\ref{thm:zeros_nu0_and_x0}.

\begin{thm}\label{thm:orthogonality_nu_positive}
 Let $\nu>0$ and $\nu_{0}$, $x_{0}(\nu)$ as defined in Theorem~\ref{thm:zeros_nu0_and_x0}. If $\nu\geq\nu_{0}$, we have
\[
 \frac{1}{\pi^{2}}\int_{0}^{\infty}p_{m}\!\left(\frac{1}{4}+x^{2};\nu\right)p_{n}\!\left(\frac{1}{4}+x^{2};\nu\right)\frac{x\sinh(2\pi x)}{|\chi(\ii x;\nu)|^{2}}\,\dd x=(\nu)_{n}^{2}(\nu+1)_{n}^{2}\,\delta_{m,n},
\]
while if $\nu<\nu_{0}$, we have
 \begin{align*}
 &\frac{1}{\pi^{2}}\int_{0}^{\infty}p_{m}\!\left(\frac{1}{4}+x^{2};\nu\right)p_{n}\!\left(\frac{1}{4}+x^{2};\nu\right)\frac{x\sinh(2\pi x)}{|\chi(\ii x;\nu)|^{2}}\,\dd x\\
 &\hskip70pt +\frac{\phi_{0}(x_{0}(\nu);\nu)}{\partial_{x}\chi(x_{0}(\nu);\nu)}\,p_{m}\!\left(\frac{1}{4}-x_{0}^{2}(\nu);\nu\right)p_{n}\!\left(\frac{1}{4}-x_{0}^{2}(\nu);\nu\right)=(\nu)_{n}^{2}(\nu+1)_{n}^{2}\,\delta_{m,n},
 \end{align*}
 for all $m,n\in\N_{0}$.
\end{thm}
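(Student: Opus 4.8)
The plan is to obtain this statement as a direct specialization of the general orthogonality relation established in the preceding theorem, using the precise description of $\sigma_{\p}(J_{\nu})$ for $\nu>0$ furnished by Theorem~\ref{thm:zeros_nu0_and_x0}. The only content to be supplied beyond the previous theorem is the explicit evaluation of the discrete sum there, which ranges over the positive zeros of $\chi(\,\cdot\,;\nu)$.

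First I would recall, via Proposition~\ref{prop:point_spec_J_nu} together with the identification of the positive zeros of $r(\,\cdot\,;\nu)$, $s(\,\cdot\,;\nu)$, and $\chi(\,\cdot\,;\nu)$ carried out in the proof of Theorem~\ref{thm:zeros_nu0_and_x0}, part~2), that for $\nu>0$ the positive zeros of $\chi(\,\cdot\,;\nu)$ are exactly the numbers $x_{0}$ with $1/4-x_{0}^{2}\in\sigma_{\p}(J_{\nu})$. Theorem~\ref{thm:zeros_nu0_and_x0} then tells us that this set is empty when $\nu\geq\nu_{0}$, whereas for $0<\nu<\nu_{0}$ it consists of the single point $x_{0}(\nu)\in(0,1/2)$, the unique zero there of $\pFq{3}{2}{x-1/2,x+1/2,x+3/2}{2x+1,x+\nu+1/2}{1}$.

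Substituting this information into the discrete sum of the general orthogonality relation completes the argument. When $\nu\geq\nu_{0}$ the sum is empty and the orthogonality measure is purely absolutely continuous, yielding the first displayed identity; when $0<\nu<\nu_{0}$ the sum collapses to the single term indexed by $x_{0}(\nu)$, producing the second identity with weight $\phi_{0}(x_{0}(\nu);\nu)/\partial_{x}\chi(x_{0}(\nu);\nu)$.

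I do not anticipate a genuine obstacle here: the analytic heavy lifting, namely the orthogonality relation itself and the enumeration of the eigenvalues, has already been accomplished in the previous theorem and in Theorem~\ref{thm:zeros_nu0_and_x0}. The only point meriting a line of care is to confirm that the simple zero $x_{0}(\nu)$ of the ${}_{3}F_{2}$-function appearing in Theorem~\ref{thm:zeros_nu0_and_x0} is indeed a simple zero of $\chi(\,\cdot\,;\nu)$ carrying the correct residue weight; this is immediate from the proportionality $s(x;\nu)=\Gamma(\nu+1)\Gamma(x+1/2)\Gamma(x+\nu+1/2)\,r(x;\nu)/\Gamma(2x+1)$ recorded in that proof, since the Gamma prefactors are nonvanishing for $\nu>0$ and $x\in(0,1/2)$.
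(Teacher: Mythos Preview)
Your proposal is correct and matches the paper's approach: the paper does not give a separate proof of this theorem but simply states, in the sentence preceding it, that for $\nu>0$ the discrete part of the orthogonality measure is determined by Theorem~\ref{thm:zeros_nu0_and_x0}, which is exactly the specialization you describe. Your additional remark about simplicity of the zero is already covered in the proof of Proposition~\ref{prop:spectral_meas_J_nu}, so no further care is needed there.
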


\begin{rem}
 With the aid of formula~\eqref{eq:chi_simple_hyp_form2} and identity~\cite[Eq.~5.4.4]{dlmf}
 \[
  \left|\Gamma\left(\frac{1}{2}+\ii x\right)\right|^{2}=\frac{\pi}{\cosh(\pi x)}, \quad x\in\R,
 \]
 one verifies that 
 \[
  \frac{1}{\pi^{2}}\frac{x\sinh(2\pi x)}{|\chi(\ii x;1)|^{2}}=2\pi x\left(\frac{1}{4}+x^{2}\right)\frac{\sinh(\pi x)}{\cosh^{2}(\pi x)}.
 \]
 It means that, for $\nu=1$, Theorem~\ref{thm:orthogonality_nu_positive} simplifies to the well-known orthogonality~\eqref{eq:orthogonality_nu=1} (recall that $\nu_{0}\in(0,1/2)$).
\end{rem}

\begin{rem}
 In view of Proposition~\ref{prop:phi_fund_proper}, orthonormal polynomials $P_{n}(\,\cdot\,;\nu)$ can be expressed in the form 
 \[
  P_{n}\!\left(\frac{1}{4}-z^{2};\nu\right)=\frac{\chi(-z;\nu)\phi_{n}(z;\nu)-\chi(z;\nu)\phi_{n}(-z;\nu)}{\chi(-z;\nu)\phi_{0}(z;\nu)-\chi(z;\nu)\phi_{0}(-z;\nu)}
 \]
 since then equations $J_{\nu}P(z;\nu)=z P(z;\nu)$ and $P_{0}(z;\nu)=1$ are fulfilled. Noticing also that the denominator coincides with $W(\phi(-z;\nu),\phi(z;\nu))$, we may rewrite the last formula as
 \begin{equation}
 P_{n}\!\left(\frac{1}{4}-z^{2};\nu\right)=\frac{\pi}{\sin(2\pi z)}\left[\chi(z;\nu)\phi_{n}(-z;\nu)-\chi(-z;\nu)\phi_{n}(z;\nu)\right],
 \label{eq:P_rel_chi_phi}
 \end{equation}
 for $z\in\C\setminus(\Z/2)$. However, since polynomial in $z$, formula~\eqref{eq:P_rel_chi_phi} extends to all $z\in\C$, if the right-hand side is interpreted as the respective limit value. Then one may combine relations from~\eqref{eq:def_phi} and \eqref{eq:def_phi_extended}--\eqref{eq:chi_simple_hyp_form3}
in order to express polynomials $P_{n}(\;\cdot\;,\nu)$ in terms of hypergeometric functions. Such formulas can be useful when deriving various asymptotic formulas for the orthogonal polynomials.
\end{rem}

\subsection{Spectral properties of the Hilbert $L$-operator}
\label{subsec:spectrum_L_nu}

Since $L_{\nu}=J_{\nu}^{-1}$, for $\nu\in\R\setminus(-\N_{0})$, spectral properties of $L_{\nu}$ can be readily deduced from previous spectral analysis of $J_{\nu}$. In this subsection, spectral properties of the Hilbert $L$-operator $L_{\nu}$ are summarized. Recall function $\chi(\,\cdot\,;\nu)$ is defined in claim 1) of Proposition~\ref{prop:phi_fund_proper} and can be expressed in terms of the hypergeometric functions by one of formulas~\eqref{eq:chi_hyper_form}--\eqref{eq:chi_simple_hyp_form3} depending on restrictions of variables.

\begin{thm}[Spectrum of $L_{\nu}$ for general $\nu$]\label{thm:spectrum_L_nu}
 For all $\nu\in\R\setminus(-\N_{0})$, the spectrum of $L_{\nu}$ is simple and $\sigma(L_{\nu})=\sigma_{\ac}(L_{\nu})\cup \sigma_{\p}(L_{\nu})$, where
 \[
  \sigma_{\ac}(L_{\nu})=[0,4]
 \]
 and
 \[
  \sigma_{\p}(L_{\nu})=\left\{\frac{4}{1-4x^{2}}\;\bigg|\;\chi(x;\nu)=0, \, x>0\right\}.
 \]
 Moreover, $\sigma_{p}(L_{\nu})$ is finite (possibly empty).
\end{thm}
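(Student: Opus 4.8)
The plan is to deduce everything from the already-established spectral picture of $J_{\nu}$ by way of the elementary identity $L_{\nu}=J_{\nu}^{-1}=f(J_{\nu})$, where $f(t):=1/t$, combined with the spectral mapping theorem. First I would confirm that $L_{\nu}$ is a genuine bounded self-adjoint operator by checking $0\notin\sigma(J_{\nu})$. By Corollary~\ref{cor:spectrum_J_nu} we have $\sigma(J_{\nu})=\sigma_{\p}(J_{\nu})\cup[1/4,\infty)$ with $\sigma_{\p}(J_{\nu})$ finite, and by Proposition~\ref{prop:point_spec_J_nu} this finite set does not contain $0$. Since $0<1/4$, it follows that $0\notin\sigma(J_{\nu})$ and in fact $\dist(0,\sigma(J_{\nu}))>0$. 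Hence $f$ is continuous and bounded on $\sigma(J_{\nu})$, and $L_{\nu}=f(J_{\nu})$ is bounded and self-adjoint.

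Next I would apply the spectral mapping theorem for the Borel functional calculus of the self-adjoint operator $J_{\nu}$, which gives $\sigma(L_{\nu})=\overline{f(\sigma(J_{\nu}))}$. Because $f$ maps the ray $[1/4,\infty)$ onto $(0,4]$ and the finite set $\sigma_{\p}(J_{\nu})=\{1/4-x^{2}\mid\chi(x;\nu)=0,\ x>0\}$ onto $\{4/(1-4x^{2})\mid\chi(x;\nu)=0,\ x>0\}$ via $1/(1/4-x^{2})=4/(1-4x^{2})$, passing to the closure merely adjoins the single limit point $0$, the image of $+\infty$. This already yields $\sigma(L_{\nu})=[0,4]\cup\sigma_{\p}(L_{\nu})$ with $\sigma_{\p}(L_{\nu})$ exactly as claimed, and the finiteness of $\sigma_{\p}(L_{\nu})$ is inherited from that of $\sigma_{\p}(J_{\nu})$.

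It then remains to sort $\sigma(L_{\nu})$ into its point and absolutely continuous parts and to record simplicity. Every eigenvector of $J_{\nu}$ for a (necessarily nonzero) eigenvalue $\lambda$ is an eigenvector of $L_{\nu}$ for $1/\lambda$ and conversely, so the point spectra correspond bijectively under $f$; this identifies $\sigma_{\p}(L_{\nu})$. For the absolutely continuous part I would argue that the spectral subspaces are unchanged: since $f$ is a bijection of $\R\setminus\{0\}$ with locally analytic inverse, it preserves the Lebesgue decomposition of the scalar spectral measures $\langle\cdot,E_{J_{\nu}}(\cdot)\cdot\rangle$, so the a.c.\ subspace of $J_{\nu}$ coincides with that of $L_{\nu}$. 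Restricted to this common subspace, $J_{\nu}$ is multiplication by $\lambda$ on $[1/4,\infty)$ and $L_{\nu}$ is multiplication by $1/\lambda$, whose spectrum is $\overline{f([1/4,\infty))}=[0,4]$; hence $\sigma_{\ac}(L_{\nu})=[0,4]$. Simplicity transfers because $f$ is injective on $\sigma(J_{\nu})$: the operators $L_{\nu}$ and $J_{\nu}$ share the same spectral projections and generate the same maximal abelian von Neumann algebra, so the simplicity of $\sigma(J_{\nu})$ from Corollary~\ref{cor:spectrum_J_nu} carries over verbatim.

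The only genuinely delicate point, and the step I would treat most carefully, is the status of the endpoint $0$: it is not an eigenvalue (no finite eigenvalue of $J_{\nu}$ maps to it), and it must be attributed to the absolutely continuous spectrum rather than be left as an isolated spurious point. This is precisely the assertion that the boundary value $0=\lim_{\lambda\to\infty}1/\lambda$ belongs to $\sigma_{\ac}(L_{\nu})$, which follows from $[1/4,\infty)\subset\sigma_{\ac}(J_{\nu})$ being unbounded and carrying a purely a.c.\ spectral measure out to $+\infty$; equivalently, $0$ is a non-isolated point of $\sigma(L_{\nu})$ accumulated by the a.c.\ band $(0,4]$ and supporting no point mass. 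Everything else is a routine transcription of Corollary~\ref{cor:spectrum_J_nu} through the map $t\mapsto 1/t$.
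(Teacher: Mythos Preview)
Your proposal is correct and follows exactly the route the paper takes: the paper's own proof consists of the single sentence ``It follows from the spectral properties of $J_{\nu}$ given in Corollary~\ref{cor:spectrum_J_nu},'' and you have supplied precisely the details behind that sentence, namely the transfer of the spectral decomposition, the absolutely continuous and point parts, simplicity, and the finiteness of $\sigma_{\p}(L_{\nu})$ through the map $t\mapsto 1/t$ applied to the self-adjoint operator $J_{\nu}$.
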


\begin{proof}
 It follows from the spectral properties of $J_{\nu}$ given in Corollary~\ref{cor:spectrum_J_nu}.
\end{proof}

\begin{thm}[Point spectrum of $L_{\nu}$ for $\nu>0$]\label{thm:spectrum_L_nu_nu>0}
 Let $\nu>0$ and $\nu_{0}$, $x_{0}(\nu)$ the roots defined by Theorem~\ref{thm:zeros_nu0_and_x0}.
 \begin{enumerate}[1)]
 \item If $\nu\geq\nu_{0}$, $\sigma_{\p}(L_{\nu})=\emptyset$, while if $\nu<\nu_{0}$, $\sigma_{\p}(L_{\nu})$ is the one-point set containing
 \[
  \|L_{\nu}\|=\frac{4}{1-4x_{0}^{2}(\nu)}.
 \]
 \item Function $\|L_{\nu}\|:(0,\nu_{0})\to(4,\infty)$ is real analytic and strictly decreasing.
 \item We have the lower bound
 \[
  \|L_{\nu}\|\geq\max\left(4,\nu\psi'(\nu)\right),
 \]
 where $\psi=\Gamma'/\Gamma$ is the Digamma function.
 \item For $\nu\to0+$, we have the asymptotic expansion
 \[
  \|L_{\nu}\|=\frac{1}{\nu}+\frac{\pi^{2}}{6}\nu+\zeta(3)\,\nu^{2}+O\!\left(\nu^{3}\right),
 \]
 where $\zeta(3)$ is Ap{\' e}ry's constant.
 \end{enumerate}
\end{thm}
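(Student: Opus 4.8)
The plan is to read off all four assertions from the spectral analysis of $J_{\nu}$ already completed, using only the relation $L_{\nu}=J_{\nu}^{-1}$ together with the positivity $L_{\nu}\geq0$, which holds for $\nu>0$ by Proposition~\ref{prop:positivity_L_oper} since $a_{n}(\nu)=1/(n+\nu)$ is strictly decreasing. I would first establish claim~1). By Theorem~\ref{thm:zeros_nu0_and_x0}, $\sigma_{\p}(J_{\nu})$ is empty for $\nu\geq\nu_{0}$ and equals the singleton $\{1/4-x_{0}^{2}(\nu)\}$ for $0<\nu<\nu_{0}$. Since $L_{\nu}$ is bounded for $\nu>0$ (because $a_{n}(\nu)=O(1/n)$), we have $0\notin\sigma(J_{\nu})$, and inversion maps the eigenvalue $1/4-x_{0}^{2}(\nu)$ to $4/(1-4x_{0}^{2}(\nu))$, which gives $\sigma_{\p}(L_{\nu})$. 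As $x_{0}(\nu)\in(0,1/2)$, this value exceeds $4=\max\sigma_{\ac}(L_{\nu})$; combined with $L_{\nu}\geq0$, it is the largest point of $\sigma(L_{\nu})$ and therefore equals $\|L_{\nu}\|$.

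Claim~2) then follows by composition. The map $x\mapsto 4/(1-4x^{2})$ is real analytic and strictly increasing on $(0,1/2)$ (its derivative is $32x/(1-4x^{2})^{2}>0$), whereas $x_{0}$ is real analytic and strictly decreasing on $(0,\nu_{0})$ by claim~3) of Theorem~\ref{thm:zeros_nu0_and_x0}; hence the composition $\|L_{\nu}\|=4/(1-4x_{0}^{2}(\nu))$ is real analytic and strictly decreasing. The boundary limits $x_{0}(\nu)\to1/2$ as $\nu\to0+$ and $x_{0}(\nu)\to0$ as $\nu\to\nu_{0}-$, also recorded in Theorem~\ref{thm:zeros_nu0_and_x0}, yield $\|L_{\nu}\|\to\infty$ and $\|L_{\nu}\|\to4$ respectively, so the range is exactly $(4,\infty)$. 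For claim~3) I would use that $\|L_{\nu}\|=\sup\sigma(L_{\nu})=1/\inf\sigma(J_{\nu})$ for every $\nu>0$ (in the case $\nu\geq\nu_{0}$ both sides equal $4$, since then $\inf\sigma(J_{\nu})=1/4$); taking reciprocals in the upper bound $\inf\sigma(J_{\nu})\leq\min\!\left(1/4,1/(\nu\,\psi'(\nu))\right)$ of Proposition~\ref{prop:bounds_inf_spec_J_nu} produces precisely $\|L_{\nu}\|\geq\max(4,\nu\,\psi'(\nu))$.

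Finally, claim~4) is a Taylor computation obtained by substituting the expansion of $x_{0}(\nu)$ from Proposition~\ref{prop:asympt_x_0} into $\|L_{\nu}\|=4/(1-4x_{0}^{2}(\nu))$. Writing $x_{0}(\nu)=\tfrac{1}{2}-\delta(\nu)$ with $\delta(\nu)=\nu+\nu^{2}+(2-\pi^{2}/6)\nu^{3}+(5-\pi^{2}/3-\zeta(3))\nu^{4}+O(\nu^{5})$, a short manipulation gives $1-4x_{0}^{2}(\nu)=4\delta(1-\delta)$, so that $\|L_{\nu}\|=\bigl(\delta(1-\delta)\bigr)^{-1}=\delta^{-1}+1+\delta+\delta^{2}+\cdots$. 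Expanding $\delta^{-1}$ as $\nu^{-1}$ times the reciprocal of the analytic factor $\delta/\nu$ and collecting powers of $\nu$ then yields the stated series; one checks that the $\nu^{0}$ terms cancel and that the coefficients $\pi^{2}/6$ and $\zeta(3)$ survive at orders $\nu$ and $\nu^{2}$.

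Each individual step is short, and most of the difficulty has been front-loaded into the earlier analysis of $J_{\nu}$. The two points requiring genuine care are the justification of the identity $\|L_{\nu}\|=1/\inf\sigma(J_{\nu})$ uniformly in $\nu>0$ — this is exactly where the positivity and boundedness of $L_{\nu}$ are indispensable — and the bookkeeping in claim~4), where the four-term input expansion must be pushed through a reciprocal and a geometric series without arithmetic slips. I expect the latter algebra, rather than any conceptual issue, to be the main source of potential error.
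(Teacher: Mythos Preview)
Your proposal is correct and follows essentially the same route as the paper: each of the four claims is read off from the corresponding result on $J_{\nu}$ (Theorem~\ref{thm:zeros_nu0_and_x0}, Proposition~\ref{prop:bounds_inf_spec_J_nu}, Proposition~\ref{prop:asympt_x_0}) via $L_{\nu}=J_{\nu}^{-1}$ and $\|L_{\nu}\|=1/\inf\sigma(J_{\nu})$. Your write-up is in fact more detailed than the paper's own proof, which merely points to these earlier results; your algebraic check for claim~4) is accurate.
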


\begin{proof}
 Claim 1) follows from claim 2) of Theorem~\ref{thm:zeros_nu0_and_x0} and the fact that 
 \[
  \|L_{\nu}\|=\sup\sigma(L_{\nu})=\frac{1}{\inf\sigma(J_{\nu})}, 
 \]
 for $\nu>0$. Claim 2) is a consequence of claim 3) of Theorem~\ref{thm:zeros_nu0_and_x0}. Claim 3) follows from Proposition~\ref{prop:bounds_inf_spec_J_nu} and the asymptotic expansion from claim 4) can be readily computed using Proposition~\ref{prop:asympt_x_0}.
\end{proof}

If $\nu<0$ and $-\nu\notin\N$, $L_{\nu}$ is no more positive-semidefinite, which follows from Lemma~\ref{lem:J_positive}. Hence, there exists a negative spectral point of $L_{\nu}$ that must be an eigenvalue according to Theorem~\ref{thm:spectrum_L_nu}. In other words, for $\nu<0$, function $\chi(\,\cdot\,;\nu)$ has a zero greater than $1/2$. On the other hand, it is not clear whether there is also an eigenvalue of $L_{\nu}$ greater than $4$. Actually, the situation seems to be quite delicate since, as indicated by numerical experiments, there is exactly one negative eigenvalue below $[0,4]$ and none or exactly one eigenvalue above $[0,4]$ depending on the value of $\nu<0$. We formulate this claim as the following conjecture. A~more detailed numerical demonstration of the phenomenon is given in Section~\ref{sec:numerics}.

\begin{conjecture}\label{conj}
 Suppose $\nu<0$ and $-\nu\notin\N$. Then $\sigma(L_{\nu})$ consists of exactly one negative eigenvalue and none or exactly one eigenvalue of $L_{\nu}$ greater than $4$. More precisely, there are numbers $-2<\nu_{3}<\nu_{2}<-1<\nu_{1}<0$ such that 
 \[
 \sigma_{\p}(L_{\nu})=\begin{cases}
  \{\lambda_{-}(\nu)\},& \quad \mbox{ for }\; \nu\in(\nu_{3},\nu_{2})\cup(\nu_{1},0),\\
  \{\lambda_{-}(\nu),\lambda_{+}(\nu)\},& \quad \mbox{ otherwise,}
 \end{cases}
 \]
 where $\lambda_{-}(\nu)<0$ and $\lambda_{+}(\nu)>4$.
\end{conjecture}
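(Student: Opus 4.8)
The plan is to reduce the whole statement to counting and locating the positive zeros of the characteristic function $\chi(\,\cdot\,;\nu)$, exploiting the correspondence from Theorem~\ref{thm:spectrum_L_nu}: a zero $x\in(1/2,\infty)$ produces the negative eigenvalue $\lambda_-=4/(1-4x^2)$, while a zero $x\in(0,1/2)$ produces the eigenvalue $\lambda_+=4/(1-4x^2)>4$. Two structural facts come for free. First, $\chi(1/2;\nu)=1\neq0$, as computed in the proof of Proposition~\ref{prop:point_spec_J_nu}, so no zero ever sits at $x=1/2$ and the intervals $(0,1/2)$ and $(1/2,\infty)$ decouple. Second, by Lemma~\ref{lem:J_positive} and Corollary~\ref{cor:spectrum_J_nu} one has $J_{\nu}\geq\nu$ and $\sigma_{\p}(J_{\nu})$ finite, so all positive zeros of $\chi(\,\cdot\,;\nu)$ lie in the bounded set $(0,\sqrt{1/4-\nu}\,]$. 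Thus the conjecture splits into: (a) $\chi(\,\cdot\,;\nu)$ has \emph{exactly one} zero in $(1/2,\infty)$ for every admissible $\nu<0$; and (b) it has \emph{at most one} zero in $(0,1/2)$, present precisely on the stated $\nu$-ranges.

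For the boundary signs I would use~\eqref{eq:chi_simple_hyp_form2}. Since $\Gamma(\nu)\Gamma(\nu+1)=\nu\,\Gamma(\nu)^2<0$ for every $\nu<0$, and the $k=0$ term of the regularized series dominates as $x\to+\infty$, one finds $\chi(x;\nu)\to0^-$ while $\chi(1/2;\nu)=1>0$. Hence $\chi(\,\cdot\,;\nu)$ changes sign on $(1/2,\infty)$ and has an odd (in particular at least one) number of zeros there, recovering the negative eigenvalue already noted before the statement. To upgrade this to \emph{exactly} one I would need an ``at most one'' result, i.e.\ a positive weight $w(x)$ for which $w(x)\,\chi(x;\nu)$ is strictly monotone on $(1/2,\infty)$, in the spirit of claim~(iii) of Lemma~\ref{lem:monotonicity_prop_aux}.

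For (b), I would argue that, granting such an ``at most one'' statement on $[0,1/2]$, the number of zeros of $\chi(\,\cdot\,;\nu)$ in $(0,1/2)$ can only toggle between $0$ and $1$; since $\chi(1/2;\nu)=1\neq0$ forbids a crossing at the right endpoint, this toggling occurs exactly when the candidate zero crosses $x=0$, that is, when $\chi(0;\nu)=0$. Comparing $\chi(0;\nu)$ with $\chi(1/2;\nu)=1>0$ then shows that $\lambda_+$ exists if and only if $\chi(0;\nu)<0$. Consequently the transition values $\nu_1,\nu_2,\nu_3$ are precisely the zeros of the explicit real-analytic function $\nu\mapsto\chi(0;\nu)$ (the continuation to $\nu<0$ of $g$ from the proof of Theorem~\ref{thm:zeros_nu0_and_x0}), and the remaining task is to prove that this function has exactly three zeros in $(-2,0)\setminus\{-1\}$, ordered $-2<\nu_3<\nu_2<-1<\nu_1<0$, with $\chi(0;\nu)<0$ precisely on $(-\infty,\nu_3)\cup(\nu_2,\nu_1)$ away from the excluded integers.

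The genuine difficulty, and the reason the statement is only conjectured, is the zero-counting (``at most one'') step for $\nu<0$. For $\nu>0$ the monotonicity in Lemma~\ref{lem:monotonicity_prop_aux} worked because every term of the governing hypergeometric series kept a fixed sign; for $\nu<0$ the Pochhammer factors $(\nu-1)_k(\nu+1)_k$ appearing in~\eqref{eq:chi_simple_hyp_form2} and $(\nu)_k^2$ in~\eqref{eq:chi_simple_hyp_form3} oscillate in sign, so termwise arguments break down and no clean monotone renormalization is evident. A rigorous proof would instead require a global substitute — for instance a Sturm-type oscillation argument built on the three-term recurrence of Lemma~\ref{lem:recur_phi}, a total-positivity or Laguerre--P\'olya structure for $\chi(\,\cdot\,;\nu)$, or interlacing properties of the associated orthogonal polynomials — together with uniform-in-$\nu$ control of the three transcendental roots of $\chi(0;\cdot)$. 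Supplying both the monotonicity replacement and the rigorous location of $\nu_1,\nu_2,\nu_3$ is exactly what the numerics achieve but the present analytic tools do not, which is why the precise fine structure of $\sigma_{\p}(L_{\nu})$ for $\nu<0$ is left open.
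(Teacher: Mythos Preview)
The statement is a \emph{conjecture} in the paper, not a theorem: the paper offers no proof, only numerical evidence (Figures~\ref{fig:rootsnu}--\ref{fig:evlsL}) and the remark preceding the conjecture that $L_\nu$ is not positive semi-definite for $\nu<0$, hence has at least one negative eigenvalue. Your write-up is not a proof either, and you are candid about this: you correctly reduce the claim to counting positive zeros of $\chi(\,\cdot\,;\nu)$ on $(0,1/2)$ and $(1/2,\infty)$ via Theorem~\ref{thm:spectrum_L_nu}, you recover the ``at least one'' negative eigenvalue by a sign-change argument, and you isolate the genuine obstruction---the absence, for $\nu<0$, of a termwise monotonicity argument analogous to Lemma~\ref{lem:monotonicity_prop_aux}. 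This matches the paper's own assessment that the fine structure of $\sigma_{\p}(L_\nu)$ for $\nu<0$ is left open.

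One small correction: your claim that the conjecture asserts $\lambda_+$ exists exactly when $\chi(0;\nu)<0$ presupposes the ``at most one zero on $(0,1/2)$'' property you have not established; without it, the parity of the zero count on $(0,1/2)$ is what toggles at the roots of $\chi(0;\cdot)$, not the count itself. Also, the asymptotic sign argument $\chi(x;\nu)\to0^-$ as $x\to\infty$ from~\eqref{eq:chi_simple_hyp_form2} needs more care: the leading term of the regularized series is $1/\Gamma(x+\nu+1/2)^2>0$, so the overall sign is governed by $\Gamma(\nu)\Gamma(\nu+1)/\Gamma(x+1/2)$, which is indeed negative for $\nu\in(-1,0)$ but \emph{positive} for $\nu\in(-2,-1)$ since $\Gamma(\nu)\Gamma(\nu+1)=\Gamma(\nu+1)^2/\nu$ changes sign at each negative integer---wait, $\Gamma(\nu)\Gamma(\nu+1)=\nu\Gamma(\nu)^2$, and $\nu<0$ gives a negative product regardless, so your sign is correct. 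But the point stands that these heuristics, while plausible, do not constitute a proof, and neither the paper nor your proposal claims otherwise.
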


\section{Illustrative and comparison plots}\label{sec:numerics}

Below, we provide six illustrative and comparison plots concerning spectral properties of $J_{\nu}$ and $L_{\nu}$. First, a comparison of $\inf\sigma(J_{\nu})$ and the upper bound from Proposition~\ref{prop:bounds_inf_spec_J_nu}, for $\nu>0$, is given in Figure~\ref{fig:x0bound}.

Second, in Figure~\ref{fig:rootsnu}, we plot the graph of the function
\begin{equation}
 \nu\mapsto\regpFq{3}{2}{1/2,\nu-1,\nu+1}{\nu+1/2,\nu+1/2}{1}.
\label{eq:3F2_func_rootnu}
\end{equation}
Zeros of~\eqref{eq:3F2_func_rootnu} coincide with zeros of $\chi(0;\nu)$ since the two functions differ by a~non-vanishing factor, see~\eqref{eq:chi_simple_hyp_form2}. As it follows from Theorem~\ref{thm:zeros_nu0_and_x0}, function~\eqref{eq:3F2_func_rootnu} has exactly one positive zero, the number $\nu_{0}$. Taking also negative values of $\nu$ into account, there is numerical evidence that function~\eqref{eq:3F2_func_rootnu} possesses exactly 3 more negative zeros $\nu_{3}<\nu_{2}<\nu_{1}<0$. Numerical values of the zeros are approximately
\[
 \nu_{3}\approx -1.33742, \qquad \nu_{2}\approx -1.1426, \qquad \nu_{1}\approx -0.43215, \qquad \nu_{0}\approx0.34909.
\]

Figure~\ref{fig:zerosx} displays positive zeros of $\chi(\,\cdot\,;\nu)$ as functions of~$\nu$. Recall that $\chi(\,\cdot\,;\nu)$ has no positive zero, if $\nu\geq\nu_{0}$, has exactly one zeros in $(0,1/2)$, if $\nu\in(0,\nu_{0})$, and at least one zero greater than $1/2$, if $\nu<0$. Based on numerical experiments, we may be more precise when $\nu<0$. It seems that, for $\nu<0$, $\chi(\,\cdot\,;\nu)$ has exactly one zero, denoted by $x_{1}(\nu)$, which greater than $1/2$, and possibly one zero $x_{0}(\nu)$ located in $(0,1/2)$. As shown in Figure~\ref{fig:zerosx}, $\chi(\,\cdot\,;\nu)$ has exactly one zero $x_{1}(\nu)$ (greater than $1/2$), when $\nu\in(\nu_{3},\nu_{2})\cup(\nu_{1},0)$. For the remaining values of $\nu<0$, $\nu\notin-\N$, both positive zeros $x_{0}(\nu)$ and $x_{1}(\nu)$ are present.

The observations on positive zeros of $\chi(\,\cdot\,;\nu)$, for $\nu<\nu_{0}$, have immediate consequences on eigenvalues of $J_{\nu}$ due to the characterization of $\sigma_{\p}(J_{\nu})$ from Proposition~\ref{prop:point_spec_J_nu}. This is illustrated by Figure~\ref{fig:evlsJ}, where the positive eigenvalue of $J_{\nu}$ is denoted by $\mu_{+}(\nu)\in(0,1/4)$ and the negative eigenvalue by $\mu_{-}(\nu)$. The corresponding consequences on $\sigma_{\p}(L_{\nu})$ are illustrated by Figure~\ref{fig:evlsL}, where the notation of Conjecture~\ref{conj} is used. Finally, the graph of function $\nu\mapsto\|L_{\nu}\|$, computed as $\|L_{\nu}\|=\max(-\lambda_{-}(\nu),\lambda_{+}(\nu),4)$, is plotted in Figure~\ref{fig:normL}.

\newpage

\[
\,
\]

\begin{figure}[htb!]
	\includegraphics[width=0.99\textwidth]{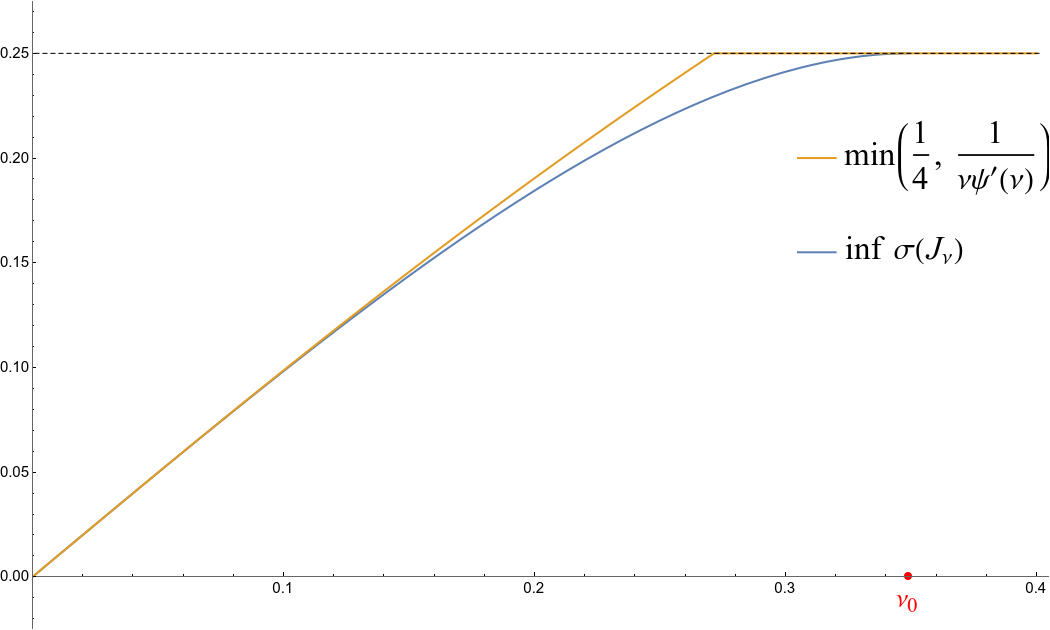}
	\vskip-8pt
	\caption{A comparison of $\inf\sigma(J_{\nu})$ and the upper bound from Proposition~\ref{prop:bounds_inf_spec_J_nu}, for $0<\nu<0.4$.}
	\label{fig:x0bound}
\end{figure}

\[
\,
\]

\begin{figure}[htb!]
	\includegraphics[width=0.99\textwidth]{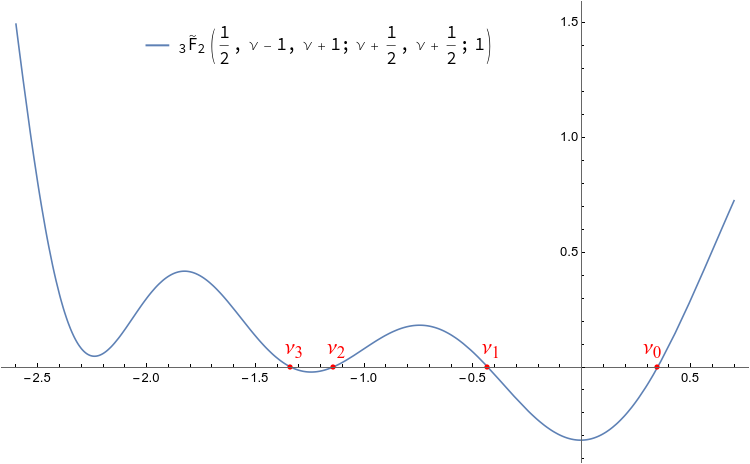}
	\vskip-8pt
	\caption{Function~\eqref{eq:3F2_func_rootnu} and its zeros.}
	\label{fig:rootsnu}
\end{figure}

\newpage

\[
\,
\]

\begin{figure}[htb!]
	\includegraphics[width=0.99\textwidth]{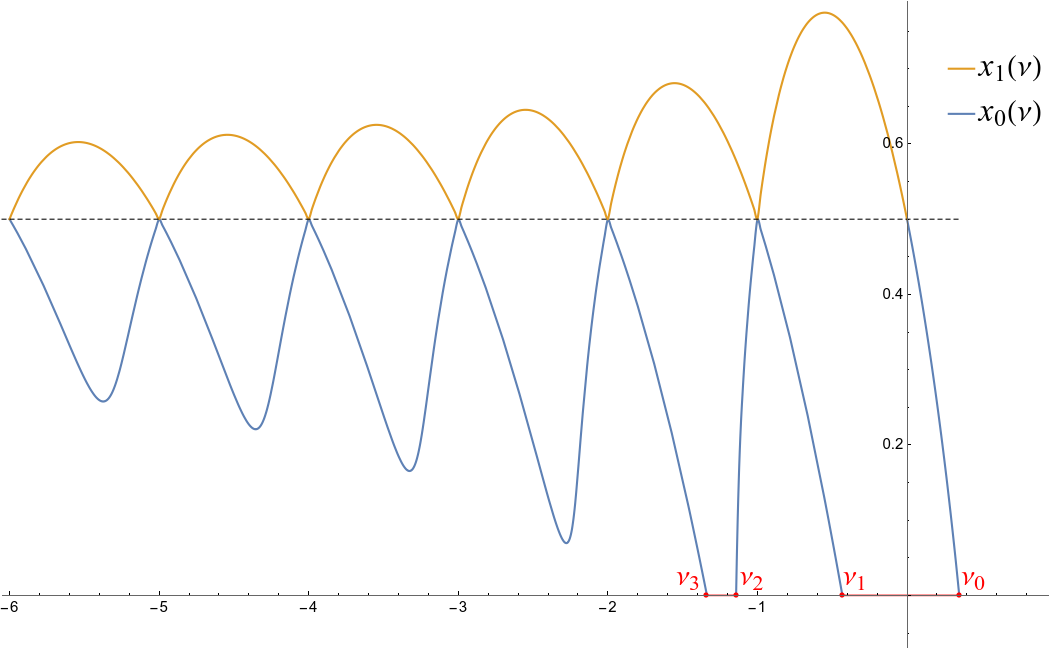}
    \vskip-8pt
	\caption{Positive zeros of function $\chi(\,\cdot\,;\nu)$.}
	\label{fig:zerosx}
\end{figure}

\[
\,
\]

\begin{figure}[htb!]
	\includegraphics[width=0.99\textwidth]{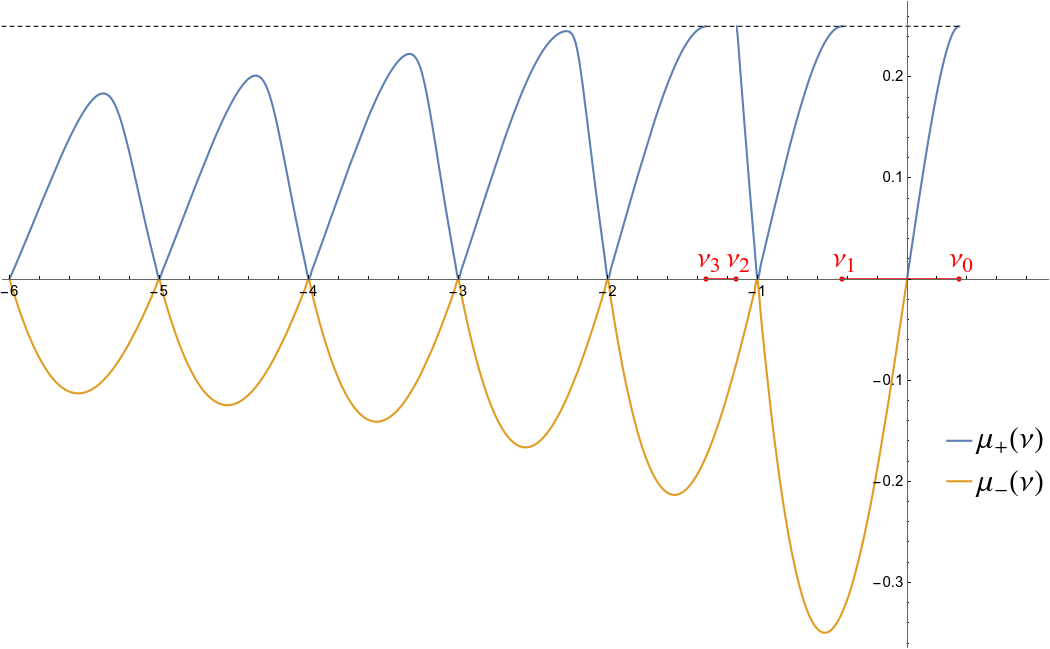}
	\vskip-8pt
	\caption{Eigenvalues of $J_{\nu}$ as functions of $\nu$.}
	\label{fig:evlsJ}
\end{figure}

\newpage

\[
\,
\]

\begin{figure}[htb!]
	\includegraphics[width=0.99\textwidth]{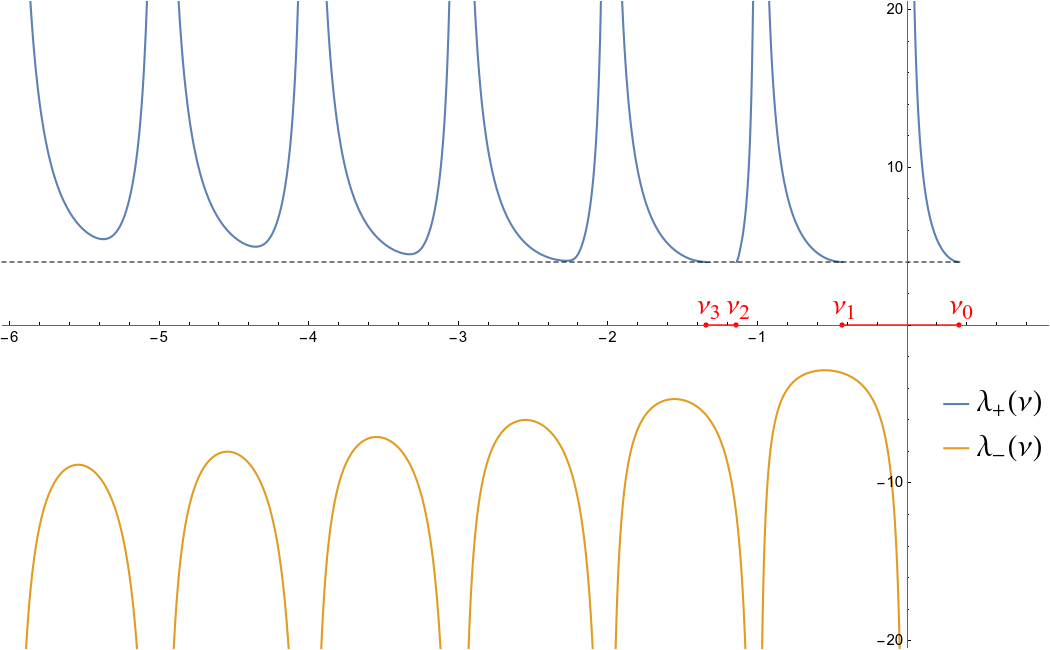}
	\caption{Eigenvalues of $L_{\nu}$ as function of $\nu$.}
	\label{fig:evlsL}
\end{figure}
\vskip12pt

\[
\,
\]

\begin{figure}[htb!]
	\includegraphics[width=0.9\textwidth]{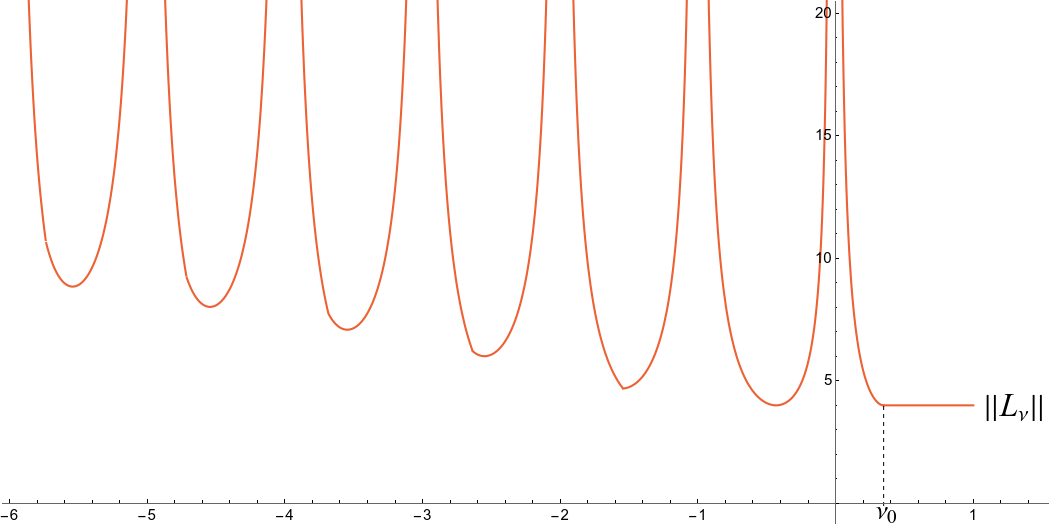}
	\caption{The norm of $L_{\nu}$ as function of $\nu$.}
	\label{fig:normL}
\end{figure}

\section*{Acknowledgement}
The author wishes to acknowledge gratefully partial support from grant No.~20-17749X of the Czech Science Foundation.

\setcounter{section}{1}
\renewcommand{\thesection}{\Alph{section}}
\setcounter{equation}{0} \renewcommand{\theequation}{\Alph{section}.\arabic{equation}}

\section*{Appendix}

\subsection{The method of successive approximations}\label{subsec:a.1}
  
The full description of the method of successive approximations is explained in~\cite{li-wong_jcam92}. We recall only a very particular case, which applies to difference equation~\eqref{eq:diff_eq_z=0} of our interest. 

We consider the difference equation in the form
\begin{equation}
 \phi_{n+2}+a_{n}\phi_{n+1}+b_{n}\phi_{n}=0,
\label{eq:diff_eq_gen_wong_li}
\end{equation}
where coefficients $a_{n}$ and $b_{n}$ have the asymptotic behavior
\[
 a_{n}=\alpha_{0}+\frac{\alpha_{1}}{n}+\frac{\alpha_{2}}{n^{2}}+O\left(\frac{1}{n^{3}}\right) \quad\mbox{ and }\quad b_{n}=\beta_{0}+\frac{\beta_{1}}{n}+\frac{\beta_{2}}{n^{2}}+O\left(\frac{1}{n^{3}}\right),
\]
for $n\to\infty$. Further, three more conditions are to be assumed:
\begin{enumerate}
\item[a)] The quadratic polynomial $x^{2}+\alpha_{0}x+\beta_{0}$ has a double root $x_{0}$.
\item[b)] $2\beta_{1}=\alpha_{0}\alpha_{1}$.
\item[c)] The quadratic polynomial $y(y-1)x_{0}^{2}+(\alpha_{1}y+\alpha_{2})x_{0}+\beta_{2}$ has a double root $y_{0}$.
\end{enumerate}
Then equation~\eqref{eq:diff_eq_gen_wong_li} has two linearly independent solutions $\phi$ and $\xi$ of asymptotic behavior:
\begin{equation}
 \phi_{n}=x_{0}^{n}n^{y_{0}}\left[1+O\left(\frac{1}{n}\right)\right]
 \quad\mbox{ and }\quad
 \xi_{n}=\log(n)\,x_{0}^{n}n^{y_{0}}\left[1+O\left(\frac{1}{n}\right)\right],
\label{eq:sol_asympt_gen_wong_li}
\end{equation}
as $n\to\infty$.

If we write equation~\eqref{eq:diff_eq_z=0} into the form of~\eqref{eq:diff_eq_gen_wong_li}, we get the coefficients
\[
a_{n}=\frac{1-8(n+1+\nu)^{2}}{4(n+1+\nu)(n+2+\nu)}=-2+\frac{2}{n}-\frac{15+8\nu}{4n^{2}}+O\left(\frac{1}{n^{3}}\right), \quad n\to\infty,
\]
and
\[
b_{n}=\frac{n+\nu}{n+2+\nu}=1-\frac{2}{n}+\frac{4+2\nu}{n^{2}}+O\left(\frac{1}{n^{3}}\right) \quad n\to\infty.
\]
Further, one readily verifies that conditions a), b), c) are satisfied with $x_{0}=1$ and $y_{0}=-1/2$. Consequently, asymptotic formulas from~\eqref{eq:sol_asympt_gen_wong_li} become~\eqref{eq:sol_asympt_z=0}.

\subsection{Higher order terms in asymptotic expansion~\eqref{eq:asympt_x_0}}\label{subsec:a.2}

We complete the proof of Proposition~\ref{prop:asympt_x_0} by computing the two coefficients in expansion~\eqref{eq:asympt_x_0} by $\nu^{3}$ and $\nu^{4}$. To this end, we need to expand function~\eqref{eq:def_h_inproof} in powers of $\nu$ up to order four. This means to compute the first two terms in the expansion of function~\eqref{eq:def_U} and the leading term in~\eqref{eq:def_V}. A lengthy but straightforward computation yields
\[
 U(y;\nu)=U(y;0)+\partial_{\nu}U(y;0)\,\nu+O(\nu^{2}), \quad \nu\to0,
\]
where
\begin{align*}
 U(y;0)&=\frac{1}{(y+1)^{2}}\sum_{k=0}^{\infty}\frac{k!}{(k+2)(k+1)(y+2)_{k}},\\
 \partial_{\nu}U(y;0)&=\frac{2}{y+1}\sum_{k=0}^{\infty}\frac{k!}{(k+2)(k+1)(y+2)_{k}}\!\left[-\frac{y+2}{(y+1)^{2}}+\sum_{j=1}^{k}\frac{1}{j(y+1+j)}\right]
\end{align*}
and similarly 
\[
V(y;\nu)=V(y;0)+O(\nu), \quad \nu\to0,
\]
where
\[
 V(y;0)=-\sum_{k=0}^{\infty}\frac{k!}{(k+1)(k+y+1)^{2}(y+1)_{k}}.
\]
Moreover, one verifies that both remainders in $O(\nu^{2})$ and $O(\nu)$ are uniform in $y$ from a~neighborhood of zero.

Plugging the zero
\[
 y_{0}(\nu)=-\nu-\nu^{2}+y_{3}\nu^{3}+O(\nu^{4})
\]
into function
\[
 h(y;\nu)=y(y+1)+\nu+y\left[1+(y+1)U(y;0)\right]\nu^{2}-y\left[V(y;0)-(y+1)\partial_{\nu}U(y;0)\right]\nu^{3}+y\,O(\nu^{4}),
\]
dividing both sides of equation $h(y;\nu)=0$ by $\nu^{3}$, and sending $\nu\to0$ yields the equality
\[
 1-U(0;0)+y_{3}=0.
\]
Thus, coefficient $y_{3}$ can be expressed in the form
\[
 y_{3}=-1+U(0,0)=-1+\sum_{k=1}^{\infty}\frac{1}{(k+1)k^{2}}=-2+\frac{\pi^{2}}{6}.
\]

Repeating the same, this time with
\[
  y_{0}(\nu)=-\nu-\nu^{2}+(-1+U(0;0))\nu^{3}+y_{4}\nu^{4}+O(\nu^{5}),
\]
the coefficient at $h(y_{0}(\nu);\nu)$ by $\nu^{4}$ provides us with the equation for the coefficient $y_{4}$, which yields
\[
 y_{4}=-2+2U(0;0)+\partial_{\nu}U(0;0)-\partial_{y}U(0;0)-V(0;0).
\]
Unknown coefficients from the right-hand side can be expressed, for example, as follows:
\[
 \partial_{\nu}U(0;0)=\sum_{k=1}^{\infty}\frac{2}{(k+1)k^{2}}\left[-2+\sum_{j=2}^{k}\frac{1}{j(j-1)}\right]=-2\sum_{k=1}^{\infty}\frac{1}{k^{3}}=-2\zeta(3),
\]
\[
 \partial_{y}U(0;0)=-\sum_{k=1}^{\infty}\frac{1}{(k+1)k^{2}}\left[1+\sum_{j=1}^{k}\frac{1}{j}\right]=1-2\zeta(3)
 \]
and
\[
V(0;0)=-\sum_{k=1}^{\infty}\frac{1}{k^{3}}=-\zeta(3).
\]
Altogether, we obtain
\[
 y_{4}=-5+\frac{\pi^{2}}{3}+\zeta(3),
\]
which yields the desired expansion
\[
 y_{0}(\nu)=-\nu-\nu^{2}-\left(2-\frac{\pi^{2}}{6}\right)\nu^{3}-\left(5-\frac{\pi^{2}}{3}-\zeta(3)\right)\nu^{4}+O(\nu^{5}),
\]
for $\nu\to0$.

\bibliographystyle{acm}

\begin{thebibliography}{10}

\bibitem{abr-bus-car_ijmms03}
{\sc Abreu, L.~D., Bustoz, J., and Cardoso, J.~L.}
\newblock The roots of the third {J}ackson {$q$}-{B}essel function.
\newblock {\em Int. J. Math. Math. Sci.}, 67 (2003), 4241--4248.

\bibitem{akh_65}
{\sc Akhiezer, N.~I.}
\newblock {\em The classical moment problem and some related questions in
  analysis}.
\newblock Hafner Publishing Co., New York, 1965.
\newblock Translated by N. Kemmer.

\bibitem{ann-mas_mpcps09}
{\sc Annaby, M.~H., and Mansour, Z.~S.}
\newblock On the zeros of the second and third {J}ackson {$q$}-{B}essel
  functions and their associated {$q$}-{H}ankel transforms.
\newblock {\em Math. Proc. Cambridge Philos. Soc. 147}, 1 (2009), 47--67.

\bibitem{bec_jcam00}
{\sc Beckermann, B.}
\newblock Complex {J}acobi matrices.
\newblock vol.~127. 2001, pp.~17--65.
\newblock Numerical analysis 2000, Vol. V, Quadrature and orthogonal
  polynomials.

\bibitem{bec-smi_ms04}
{\sc Beckermann, B., and Castro~Smirnova, M.}
\newblock On the determinacy of complex {J}acobi matrices.
\newblock {\em Math. Scand. 95}, 2 (2004), 285--298.

\bibitem{bou-mas_oam21b}
{\sc Bouthat, L., and Mashreghi, J.}
\newblock {$L$}-matrices with lacunary coefficients.
\newblock {\em Oper. Matrices 15}, 3 (2021), 1045--1053.

\bibitem{bou-mas_oam21a}
{\sc Bouthat, L., and Mashreghi, J.}
\newblock The norm of an infinite {$L$}-matrix.
\newblock {\em Oper. Matrices 15}, 1 (2021), 47--58.

\bibitem{bou-mas_laa22}
{\sc Bouthat, L., and Mashreghi, J.}
\newblock The critical point and the $p$-norm of the {H}ilbert {$L$}-matrix.
\newblock {\em Linear Algebra Appl. 634\/} (2022), 1--14.

\bibitem{cho_amm83}
{\sc Choi, M.~D.}
\newblock Tricks or treats with the {H}ilbert matrix.
\newblock {\em Amer. Math. Monthly 90}, 5 (1983), 301--312.

\bibitem{dlmf}
{\it NIST Digital Library of Mathematical Functions}.
\newblock http://dlmf.nist.gov/, Release 1.0.27 of 2020-06-15.
\newblock F.~W.~J. Olver, A.~B. {Olde Daalhuis}, D.~W. Lozier, B.~I. Schneider,
  R.~F. Boisvert, C.~W. Clark, B.~R. Miller, B.~V. Saunders, H.~S. Cohl, and
  M.~A. McClain, eds.

\bibitem{gas-rah_04}
{\sc Gasper, G., and Rahman, M.}
\newblock {\em Basic hypergeometric series}, second~ed., vol.~96 of {\em
  Encyclopedia of Mathematics and its Applications}.
\newblock Cambridge University Press, Cambridge, 2004.
\newblock With a foreword by Richard Askey.

\bibitem{ism-let-val_siamjma89}
{\sc Ismail, M. E.~H., Letessier, J., and Valent, G.}
\newblock Quadratic birth and death processes and associated continuous dual
  {H}ahn polynomials.
\newblock {\em SIAM J. Math. Anal. 20}, 3 (1989), 727--737.

\bibitem{ism-mul_siamjma87}
{\sc Ismail, M. E.~H., and Mulla, F.~S.}
\newblock On the generalized {C}hebyshev polynomials.
\newblock {\em SIAM J. Math. Anal. 18}, 1 (1987), 243--258.

\bibitem{kal-sto_lma16}
{\sc Kalvoda, T., and \v{S}\v{t}ov\'{\i}\v{c}ek, P.}
\newblock A family of explicitly diagonalizable weighted {H}ankel matrices
  generalizing the {H}ilbert matrix.
\newblock {\em Linear Multilinear Algebra 64}, 5 (2016), 870--884.

\bibitem{kat_66}
{\sc Kato, T.}
\newblock {\em Perturbation theory for linear operators}.
\newblock Die Grundlehren der mathematischen Wissenschaften, Band 132.
  Springer-Verlag New York, Inc., New York, 1966.

\bibitem{koe-les-swa_10}
{\sc Koekoek, R., Lesky, P.~A., and Swarttouw, R.~F.}
\newblock {\em Hypergeometric orthogonal polynomials and their
  {$q$}-analogues}.
\newblock Springer Monographs in Mathematics. Springer-Verlag, Berlin, 2010.
\newblock With a foreword by Tom H. Koornwinder.

\bibitem{koe-swa_jmaa94}
{\sc Koelink, H.~T., and Swarttouw, R.~F.}
\newblock On the zeros of the {H}ahn-{E}xton {$q$}-{B}essel function and
  associated {$q$}-{L}ommel polynomials.
\newblock {\em J. Math. Anal. Appl. 186}, 3 (1994), 690--710.

\bibitem{koe-vanass_ca95}
{\sc Koelink, H.~T., and Van~Assche, W.}
\newblock Orthogonal polynomials and {L}aurent polynomials related to the
  {H}ahn-{E}xton {$q$}-{B}essel function.
\newblock {\em Constr. Approx. 11}, 4 (1995), 477--512.

\bibitem{lap-kre-sta_prep21}
{\sc Krej{\v c}i{\v r}{\' i}k, D., Laptev, A., and {\v S}tampach, F.}
\newblock Spectral enclosures and stability for non-self-adjoint discrete
  {S}chr\"{o}edinger operators on the half-line.
\newblock {\em Submitted}, arXiv:2111.08265 (2021).

\bibitem{mas_09}
{\sc Mashreghi, J.}
\newblock {\em Representation theorems in {H}ardy spaces}, vol.~74 of {\em
  London Mathematical Society Student Texts}.
\newblock Cambridge University Press, Cambridge, 2009.

\bibitem{mas-ran_amp19}
{\sc Mashreghi, J., and Ransford, T.}
\newblock Linear polynomial approximation schemes in {B}anach holomorphic
  function spaces.
\newblock {\em Anal. Math. Phys. 9}, 2 (2019), 899--905.

\bibitem{prudnikov-etal_vol3}
{\sc Prudnikov, A.~P., Brychkov, Y.~A., and Marichev, O.~I.}
\newblock {\em Integrals and series. {V}ol. 3}.
\newblock Gordon and Breach Science Publishers, New York, 1990.
\newblock More special functions, Translated from the Russian by G. G. Gould.

\bibitem{rai_71}
{\sc Rainville, E.~D.}
\newblock {\em Special functions}, first~ed.
\newblock Chelsea Publishing Co., Bronx, N.Y., 1971.

\bibitem{reed-simon_IV}
{\sc Reed, M., and Simon, B.}
\newblock {\em Methods of modern mathematical physics. {IV}. {A}nalysis of
  operators}.
\newblock Academic Press [Harcourt Brace Jovanovich, Publishers], New
  York-London, 1978.

\bibitem{rha_blms89}
{\sc Rhaly, Jr., H.~C.}
\newblock Terraced matrices.
\newblock {\em Bull. London Math. Soc. 21}, 4 (1989), 399--406.

\bibitem{ros_pams58}
{\sc Rosenblum, M.}
\newblock On the {H}ilbert matrix. {II}.
\newblock {\em Proc. Amer. Math. Soc. 9\/} (1958), 581--585.

\bibitem{simon_79}
{\sc Simon, B.}
\newblock {\em Trace ideals and their applications}, vol.~35 of {\em London
  Mathematical Society Lecture Note Series}.
\newblock Cambridge University Press, Cambridge-New York, 1979.

\bibitem{swarttow-phd92}
{\sc Swarttouw, R.~F.}
\newblock {\em The {H}ahn-{E}xton q-{B}essel function}.
\newblock ProQuest LLC, Ann Arbor, MI, 1992.
\newblock Thesis (Dr.)--Technische Universiteit Delft (The Netherlands).

\bibitem{tes_00}
{\sc Teschl, G.}
\newblock {\em Jacobi operators and completely integrable nonlinear lattices},
  vol.~72 of {\em Mathematical Surveys and Monographs}.
\newblock American Mathematical Society, Providence, RI, 2000.

\bibitem{sta_ieot17}
{\sc \v{S}tampach, F.}
\newblock The characteristic function for complex doubly infinite {J}acobi
  matrices.
\newblock {\em Integral Equations Operator Theory 88}, 4 (2017), 501--534.

\bibitem{sta-sto_laa13}
{\sc \v{S}tampach, F., and \v{S}t'ov\'{\i}\v{c}ek, P.}
\newblock The characteristic function for {J}acobi matrices with applications.
\newblock {\em Linear Algebra Appl. 438}, 11 (2013), 4130--4155.

\bibitem{sta-sto_sm13}
{\sc \v{S}tampach, F., and \v{S}\v{t}ov\'{\i}\v{c}ek, P.}
\newblock The {H}ahn-{E}xton {$q$}-{B}essel function as the characteristic
  function of a {J}acobi matrix.
\newblock {\em Spec. Matrices 1\/} (2013), 131--147.

\bibitem{sta-sto_jat16}
{\sc \v{S}tampach, F., and \v{S}\v{t}ov\'{\i}\v{c}ek, P.}
\newblock The {N}evanlinna parametrization for {$q$}-{L}ommel polynomials in
  the indeterminate case.
\newblock {\em J. Approx. Theory 201\/} (2016), 48--72.

\bibitem{li-wong_jcam92}
{\sc Wong, R., and Li, H.}
\newblock Asymptotic expansions for second-order linear difference equations.
\newblock vol.~41. 1992, pp.~65--94.
\newblock Asymptotic methods in analysis and combinatorics.

\end{thebibliography}

\end{document}